\documentclass[11pt,a4paper,reqno]{amsart}
\usepackage{cmap}
\usepackage[T1]{fontenc}
\usepackage{graphicx}
\usepackage{subcaption}
\usepackage{setspace,mathrsfs,amsthm,amsmath,amssymb,amsfonts,lipsum,appendix,mathtools,cite,tabularx,fontenc,bbm,caption,soul}
\usepackage[normalem]{ulem}
\def\inpr#1{\left\langle #1\right\rangle}
\usepackage[shortlabels]{enumitem}
\makeatletter
\newtheorem*{rep@theorem}{\rep@title}
\newcommand{\newreptheorem}[2]{%
	\newenvironment{rep#1}[1]{%
		\def\rep@title{#2 \ref{##1}}%
		\begin{rep@theorem}}%
		{\end{rep@theorem}}}
\makeatother
\usepackage{amsthm}
\newtheorem*{theorem*}{Theorem}
\newtheorem{theorem}{Theorem}[section]
\newreptheorem{theorem}{Theorem}

\newtheorem{corollary}[theorem]{Corollary}
\newtheorem{proposition}[theorem]{Proposition}
\theoremstyle{definition}
\newtheorem{remark}[theorem]{Remark}
\newtheorem{definition}[theorem]{Definition}

\newcommand{\norm}[1]{\left\lVert#1\right\rVert}
\newcounter{A}
\renewcommand{\theA}{($\textbf{A}$)}

\newtheorem*{openproblem}{Open problem}
\newtheorem{example}[theorem]{Example}

\usepackage[nointegrals]{wasysym}
\usepackage[misc]{ifsym}
\usepackage[dvipsnames]{xcolor}
\definecolor{ao}{rgb}{0.0, 0.5, 0.0}
\definecolor{lasallegreen}{rgb}{0.0, 0.3, 0.0}
\usepackage{hyperref,tikz}
\hypersetup{
	colorlinks=true,
	linkcolor=blue,
	filecolor=magenta,      
	urlcolor=violet,
	citecolor=OrangeRed,
}
\usepackage[margin=0.75in]{geometry}
\usetikzlibrary{calc,patterns,angles,quotes}

\usepackage{wrapfig}

\newcommand{\cpom}{\mathcal{C}^+(\overline{\Omega})}
\let\oldnorm\norm
\def\norm{\@ifstar{\oldnorm}{\oldnorm*}}

\newcommand{\Om} {\Omega}
\newcommand{\la} {\lambda}

\newcommand{\Lom} {\mathcal{L}}

\newcommand{\ol}[1] {\overline{#1}}

\newcommand{\R}{{\mathbb R}}

\newcommand{\N}{{\mathbb N}}

\newcommand{\lb} {\langle}
\newcommand{\rb} {\rangle}

\usepackage[hyperpageref]{backref}

\usepackage{orcidlink}

\usepackage{a4wide}
\usepackage{csquotes}

\usepackage{amsfonts}
\usepackage{amssymb,amsthm}
\usepackage{amsmath}
\allowdisplaybreaks
\usepackage{mathtools}
\mathtoolsset{showonlyrefs}

\numberwithin{equation}{section}

\usepackage {setspace}

\usepackage{enumitem}

\setlist{nosep}

\usepackage{todonotes}

\begin{document}
\singlespacing

\title[On the weighted logarithmic potential operator]{On the weighted logarithmic potential operator}

\author[T.V.~Anoop]{T.V.~Anoop}
\author[Jiya Rose Johnson]{Jiya Rose Johnson}

\address[T.V.~Anoop]{\newline\indent
	Department of Mathematics,
	Indian Institute of Technology Madras, 
	\newline\indent
	Chennai 36, India
	\newline\indent
	\orcidlink{0000-0002-2470-9140} 0000-0002-2470-9140 
}
\email{anoop@iitm.ac.in}
\address[Jiya Rose Johnson]{\newline\indent
	Department of Mathematics,
	Indian Institute of Technology Madras, 
	\newline\indent
	Chennai 36, India
}
\email{jiyarosejohnson@gmail.com}

\subjclass[2020]{
    47G40, 
    47A75 
    31A10 
   }
\keywords{Logarithmic potential operator, Weighted logarithmic potential operator, Weighted transfinite diameter, Weighted capacity, Reverse Faber-Krahn inequalities}

\begin{abstract} 
    For a bounded open set $\Omega \subset \mathbb{R}^N$ with $N\geq 2$, and for  positive continuous functions $w,g$ on $\overline{\Omega}$, we consider the  weighted eigenvalue problem
    \begin{equation}
        \mathcal{L}_{w} u =\tau gu,
    \end{equation}
    where $\mathcal{L}_w$ is the weighted logarithmic potential operator on $L^2(\Omega)$ as defined below:
    \begin{equation*}
        \mathcal{L}_w u(x)=\int_\Omega \log\left(\frac{w(x)w(y)}{|x-y|}\right)u(y)dy.
    \end{equation*}
    We study the monotonicity and continuity of the largest positive eigenvalue $\tau_{w,g}^+(\Omega)$ with respect to  $\Omega$, $w$, and $g$. We also establish that $\tau_{w,g}^+(\Omega)$ satisfies a reverse Faber–Krahn inequality under polarization. 
     We provide a sufficient condition for the existence of a negative eigenvalue in terms of the weighted transfinite diameter of $\Omega$, under the assumption that $\log(w)$ is superharmonic. For $\Omega\subset \mathbb{R}^2$, if $\Delta\log w $ is a constant $C$, we show that 0 can be an eigenvalue of $\mathcal{L}_w$ only when $C=\frac{2\pi}{|\Omega|}$. For such domains, if $\log(w)$ is a harmonic function on $\Omega$, we provide a representation formula for the eigenfunctions. Using this representation, we establish variants of the maximum principles that give some insight into the geometry of these eigenfunctions.
\end{abstract} 
\maketitle
\definecolor{lblack}{gray}{0.3}
\definecolor{mygray}{gray}{0.9}
\definecolor{vlgray}{gray}{0.96}
\definecolor{medgray}{gray}{0.8}
\definecolor{dgray}{gray}{0.7}
\begin{quote}	
	\setcounter{tocdepth}{1}
	\tableofcontents
	\addtocontents{toc}{\vspace*{0ex}}
\end{quote}
\section{Introduction}
\par For a bounded open set $\Om$ in $\R^2$, the logarithmic potential operator $\Lom$ on $L^2(\Omega)$ is defined as follows: 
\begin{equation}\label{definition_Lu}
    \Lom u(x)=\int_\Omega \log\frac{1}{|x-y|}u(y)dy,
\end{equation}
where $|\cdot|$ denotes the Euclidean norm in $\R^2$. It is easy to verify that the operator $\Lom$ is compact and self-adjoint (see \cite[p.367]{troutman1967}). Consequently, the spectrum of $\Lom$ comprises a countably infinite number of eigenvalues, with 0 being the only limit point, and the corresponding eigenfunctions form an orthonormal basis for $L^2(\Omega).$  The spectral properties of the logarithmic potential operator have been studied in numerous articles, for example, see  \cite{anoopjiya2025,troutman1967,troutman1969,suraganlog2016,Kac1970} and the references therein. In the planar case, the operator $\Lom$ is the Newtonian potential operator. In higher dimensions, $\Lom$ is an analogue of Riesz potential operator. The eigenvalue problems for Riesz potential operators have been studied in \cite{Alsenafi_etal_2024,suragan_ruzhansky_2016,suragan2016riesz,suragan2009}. A variant of the operator $\Lom$ is used to define the logarithmic capacity for compact sets in $\R^2$\cite{Ransford1995}. Here, we consider a two-way generalization of the logarithmic potential operator: 1) a weighted version of the logarithmic kernel and 2) higher-dimensional cases. 

Let us first fix some notation. We use $|\cdot|$ to denote the Euclidean norm in $\R^N$, which corresponds to the absolute value for $N=1$. We denote $B_r$ for the open ball centered at the origin with radius $r$, and $B_r(x)$ denotes the open ball centered at $x$ with radius $r$. For a compact set $E$ in $\R^N$, $\mathcal{C}^{+}(E)$ denotes the set of  all positive continuous function on $E$. Unless otherwise specified, $\Omega$ is a bounded open subset of $\R^N$ with $ N\ge 2$. 
  
For a weight function $w\in \cpom$, we define the weighted logarithmic potential operator $\Lom_w$ as follows:
\begin{equation}\label{def_lw}
    \Lom_w u(x):=\int_\Omega \log\left(\frac{w(x)w(y)}{|x-y|}\right)u(y)dy.
\end{equation}
As in the case of $\Lom$, we can verify that the operator $\Lom_w$ is compact and self-adjoint (see Proposition \ref{prop_Lom_compact}) and hence the set of all eigenvalues of $\Lom_w$ is a countable set with 0 being the only limit point. Moreover, the corresponding eigenfunctions form an orthonormal basis for $L^2(\Omega).$ In \cite{saff1992paper}, Saff considered a potential operator closely related to  $\Lom_w$,  defined as follows:
\begin{equation*}
    U_\mu(x):=\int_E \log\left(\frac{w(x)w(y)}{|x-y|}\right)d\mu(y),
\end{equation*}
where $\mu$ is a normalized Borel measure on the compact set  $E$. Using the associated weighted energy, Saff established the equivalence of the weighted capacity, the weighted transfinite diameter, and the weighted Chebyshev constant (see also \cite{ebsaff1997}). Similar weighted nonlocal operators, for instance, weighted Riesz potential–type operators, were investigated in \cite{Saff2007Riesz,Saff2009Riesz,Saff2023Riesz}. In this article, we study the following weighted eigenvalue problem:
\begin{equation}\label{evproblem}
    \Lom_w u =\tau\, gu,\quad\text{for }w,g\in\cpom.
\end{equation}
The eigenvalue problems with indefinite weights arise in a wide range of applications in engineering, physics, and biology; see, for example, \cite{sovrano2018} for migration-selection models in population genetics. The weighted eigenvalue problems for both the Laplacian and the $ p$-Laplacian have been extensively studied in the literature; for example, see \cite{Szulkin1999,Alberico2020} and the references therein. 
The weighted eigenvalue problem for the fractional Laplacian was studied in \cite{mrityunjoy2025}. We did not find any studies exploring the weighted-eigenvalue problem (or even eigenvalue problem) for $\Lom_w$.

We say $\tau$ is an eigenvalue of \eqref{evproblem} and $u\in L^2(\Om)\setminus\{0\}$  is an eigenfunction corresponding to $\tau$, if the following equation holds:
 \begin{equation}
    \lb \Lom_w u,v \rb=\tau \lb gu,v\rb,\quad \forall\, v\in L^2(\Om),
\end{equation}
where $\lb \cdot,\cdot\rb$ denotes the usual inner product on $L^2(\Om)$. In other words,
\begin{equation}\label{eulerlagrange}
    \iint\limits_{\Om\;\Om} \log\left( \frac{w(x)w(y)}{|x-y|}\right)u(x)v(y)dxdy=\tau\int_\Om guv,\quad \forall\, v\in L^2(\Om).
\end{equation}

It is easy to see that there is a one-to-one correspondence between the eigenvalues of \eqref{evproblem} and the eigenvalues of a compact operator $\frac{1}{g}\Lom_w$.  However, $\frac{1}{g}\Lom_w$ is not a self-adjoint operator, unless $g$ is a constant function (see Proposition \ref{prop_sa_iff}). Therefore, we will associate eigenvalues of \eqref{evproblem} with eigenvalues of another operator, which is self-adjoint and compact. Our first theorem asserts the existence of an infinite number of eigenvalues for \eqref{evproblem}. 
\begin{theorem}\label{thm_infinite_ev}
    Let $\Om\subset\R^N$ be a bounded domain and $w,g\in\cpom$. Then the set of non-zero eigenvalues of  \eqref{evproblem} forms a sequence $(\tau_n)$ and the corresponding eigenfunctions $(u_n)$ are in $\mathcal{C}(\overline{\Omega})$ such that  $(|\tau_n|)$ is decreasing and  has the following variational characterization:
    \begin{equation}
    \tau_n = 
    \max_{\substack{u \in  U_{n-1} \\ u \neq 0}}
    \frac{\langle \Lom_{w} u,\, u \rangle}{\int_{\Omega} gu^2 }, \;\;\text{ if } \tau_n>0,
    \end{equation}
    \begin{equation}
    \tau_n = 
    \min_{\substack{u \in U_{n-1} \\ u \neq 0}}
    \frac{\langle \Lom_{w} u,\, u \rangle}{\int_{\Omega} gu^2 }, \;\; \text{ if } \tau_n<0,
    \end{equation}
    where $U_0=L^2(\Omega)$ and $$U_n = \left\{u\in L^2(\Omega):\int_\Omega g u u_i=0, \forall\, i=1,2\ldots, n \right\}.$$
\end{theorem}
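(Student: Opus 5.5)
The plan is to symmetrize the problem with the multiplication operator by $g^{-1/2}$. Since $g\in\cpom$, there are constants $0<m\le g\le M$ on $\overline{\Omega}$. Hence $G:u\mapsto g^{-1/2}u$ is a bounded, invertible operator on $L^2(\Omega)$, and it is self-adjoint. I would define
\begin{equation*}
T:=G\,\Lom_w\,G,\qquad Tv(x)=\int_\Omega \frac{1}{\sqrt{g(x)}}\log\left(\frac{w(x)w(y)}{|x-y|}\right)\frac{1}{\sqrt{g(y)}}\,v(y)\,dy .
\end{equation*}
By Proposition \ref{prop_Lom_compact}, $\Lom_w$ is compact and self-adjoint, so $T$ is compact and self-adjoint as well. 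The substitution $u=Gv$ (that is, $v=\sqrt g\,u$) turns $\Lom_w u=\tau g u$ into $Tv=\tau v$. It also gives
\begin{equation*}
\frac{\langle Tv,v\rangle}{\|v\|^2}=\frac{\langle \Lom_w u,u\rangle}{\int_\Omega g u^2},
\qquad
\langle v_i,v_j\rangle=\int_\Omega g\,u_iu_j .
\end{equation*}
So the non-zero eigenvalues of \eqref{evproblem} are exactly the non-zero eigenvalues of $T$, with the same multiplicities.

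Next I would apply the spectral theorem for compact self-adjoint operators to $T$. It gives the non-zero eigenvalues as a finite or countable family, each of finite multiplicity, accumulating only at $0$. I would list them with multiplicity so that $(|\tau_n|)$ is non-increasing. The corresponding eigenvectors $v_n$ are chosen $L^2$-orthonormal, and I set $u_n=g^{-1/2}v_n$, so the $u_n$ are $g$-orthonormal.

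To see that the list is infinite, I would use the fact that the eigenvectors of $T$ span the closure of its range. If there were only finitely many, $T$ would have finite rank. That cannot hold for an integral operator whose kernel has a logarithmic singularity on the diagonal. I would argue this either by mimicking the infinite-dimensionality argument the paper uses for $\Lom_w$, or by noting that $T$ is $G\Lom_wG$ with $G$ invertible, so $T$ has finite rank exactly when $\Lom_w$ does. If $\Lom_w$ were of finite rank, its range would consist of finitely many functions. But $\Lom_w\chi_{B_r(x_0)}$, for small balls inside $\Omega$, produces functions whose singular behaviour near $x_0$ varies with $x_0$, so infinitely many of them are linearly independent.

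For the variational characterization I would use the standard min–max or Rayleigh-quotient recursion for compact self-adjoint operators, treating the positive and negative parts separately. Because the list is ordered by $|\tau_n|$, positive and negative eigenvalues interleave. So the claim must be read as follows: whenever $\tau_n>0$, it is the maximum of the Rayleigh quotient on the orthogonal complement of $v_1,\dots,v_{n-1}$, and analogously with a minimum when $\tau_n<0$. I would verify this by expanding $v\perp v_1,\dots,v_{n-1}$ in the eigenbasis, where the part of $v$ in $\ker T$ contributes nothing. The quotient is then a convex combination of the remaining eigenvalues $\tau_k$, $k\ge n$, plus possibly $0$. If $\tau_n>0$, every remaining positive eigenvalue is at most $|\tau_n|=\tau_n$, so the supremum is at most $\tau_n$, and it is attained at $v_n$. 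The case $\tau_n<0$ is symmetric. Translating back through $v=\sqrt g\,u$ turns the constraint $\langle v,v_i\rangle=0$ into $\int_\Omega g\,u\,u_i=0$, which is exactly the space $U_{n-1}$.

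Finally, continuity of the eigenfunctions comes from $u_n=\tau_n^{-1}g^{-1}\Lom_w u_n$. Since $1/g$ is continuous on $\overline{\Omega}$, it suffices to show that $\Lom_w$ maps $L^2(\Omega)$ into $\mathcal{C}(\overline{\Omega})$. Write
\begin{equation*}
\Lom_w u(x)=\log w(x)\int_\Omega u+\int_\Omega \log w(y)\,u(y)\,dy+\Lom u(x).
\end{equation*}
The first term is continuous because $\log w$ is, and the second is a constant. For the third, $x\mapsto \log|x-\cdot|$ is continuous from $\overline{\Omega}$ into $L^2(\Omega)$, which follows from dominated convergence using the local square integrability of $\log|z|$. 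Cauchy–Schwarz then gives continuity of $\Lom u$. I expect the main obstacle to be the infinitude of the eigenvalues, that is, showing $\Lom_w$ is not of finite rank.
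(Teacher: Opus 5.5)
Your proposal is correct and follows the paper's proof essentially step for step. The paper also passes to $\tilde u=\sqrt{g}\,u$ and works with the compact self-adjoint operator $\Lom_{w,g}$, which is exactly your $G\Lom_w G$. It reads off the eigenvalues and their variational characterization from the spectral theorem for compact self-adjoint operators, and it gets continuity of the eigenfunctions from the fact that $\Lom_w$ maps $L^2(\Omega)$ into $\mathcal{C}(\overline{\Omega})$.

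The infinite-rank step, which you only sketch, is merely asserted in the paper via $\mathrm{Ran}(\Lom)$ being infinite-dimensional. Your ``singular behaviour near $x_0$'' remark is imprecise as stated, since each $\Lom_w\chi_{B_r(x_0)}$ is continuous. A clean way to close the step: suppose $\Lom_w=\sum_{k=1}^r\lambda_k\langle\cdot,e_k\rangle e_k$ had finite rank. The eigenfunctions $e_k$ are continuous, hence bounded. The kernel $\log\bigl(w(x)w(y)/|x-y|\bigr)$ would then agree a.e. with the bounded function $\sum_k\lambda_k e_k(x)e_k(y)$, which the logarithmic singularity on the diagonal rules out.
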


Next, we study some properties of the largest positive eigenvalue of \eqref{evproblem}. 

\subsection{The largest positive eigenvalue of \eqref{evproblem}} 
Let $\Om$ be a bounded open subset of $\R^N$, and let $w,g\in\cpom$.  We denote the largest positive eigenvalue of \eqref{evproblem}  by  $\tau_{w,g}^+(\Om)$. Therefore,
\begin{equation}\label{equivalent_definition}
  \tau_{w,g}^+(\Om)=\max_{\substack{u \in L^2(\Omega) \\ u \neq 0}}
 \frac{\lb \Lom_w u,u \rb}{\int_\Om gu^2}.
\end{equation}
Equivalently,
\begin{align}\label{weightedcharmu1}
       \tau_{w,g}^+(\Om)&=\max_{u\in \mathcal{M}_g} \lb \Lom_w u,u \rb,\quad \text{where}\quad \mathcal{M}_g=\left\{u\in L^2(\Omega):\int_\Om gu^2=1  \right\}.
\end{align}
Observe that the largest positive eigenvalue need not be the eigenvalue with the largest absolute value. In our next theorems, we explore the monotonicity and continuity of $\tau_{w,g}^+(\Om)$  with respect to the weights $w, g$ and the domain $\Om$. The domain monotonicity of the eigenvalues of $\Lom$ was established in~\cite[p.11]{Alsenafi_etal_2024}, while the strict domain monotonicity was proved in~\cite[Theorem 5.6]{anoopjiya2025}.  
The following theorem establishes the monotonicity of $\tau_{w,g}^+(\Omega)$ with respect to the weights $w, g$ and the domain $\Omega$.
\begin{theorem}\label{theorem_monotonicity_w_g}
Let $\Om$ and $\Om_1$ be a bounded open subsets of $\R^N$, and let $w,w_1,g,g_1\in\cpom$ be with $ \sqrt{\mathrm{diam}(\Om)}\leq w$.
    \begin{enumerate}[(i)]
        \item  Let $ w\leq w_1$ and  $g_1\leq g$. Then
                \begin{equation}
                    \tau_{w,g}^+(\Om)\leq \tau_{w_1,g_1}^+(\Om),
                \end{equation}
                and the equality holds only if $w=w_1$ and $g=g_1$.

        \item \label{item:dom:mon}
       Let $\Om_1\subseteq\Om$.  Then
                \begin{equation}
                    \tau_{w,g}^+(\Omega_1)\leq \tau_{w,g}^+(\Omega),
                \end{equation}
                and the equality holds only if $\Om=\Om_1$.
                   
    \end{enumerate}     
\end{theorem}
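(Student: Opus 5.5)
The plan is to use the variational characterization \eqref{equivalent_definition} together with the positivity of the kernel guaranteed by the hypothesis $\sqrt{\mathrm{diam}(\Om)}\le w$. For $x,y\in\Om$ we have $|x-y|\le \mathrm{diam}(\Om)\le w(x)w(y)$, so $\log\frac{w(x)w(y)}{|x-y|}\ge 0$. The set where this vanishes is contained in $\{|x-y|=\mathrm{diam}(\Om)\}$, which has measure zero in $\Om\times\Om$. Hence the kernel is positive a.e.

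The first step is to show that the maximizer in \eqref{equivalent_definition} can be taken nonnegative, and in fact positive. For any $u$ we have $\langle\Lom_w u,u\rangle\le\langle\Lom_w|u|,|u|\rangle$ by positivity of the kernel, while the denominator $\int_\Om g u^2$ is unchanged under $u\mapsto |u|$. So if $u$ is an eigenfunction for $\tau^+_{w,g}(\Om)$, then $|u|$ is also a maximizer. It is therefore an eigenfunction, and by Theorem \ref{thm_infinite_ev} it is continuous. Then
\[
g|u|=\tfrac{1}{\tau^+}\Lom_w|u|>0 \quad\text{on } \overline{\Om},
\]
since the kernel is a.e. positive and $|u|\not\equiv 0$. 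Thus $\tau^+_{w,g}(\Om)$ admits a strictly positive continuous eigenfunction $u$, and $\tau^+_{w,g}(\Om)>0$.

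For (i), take $u>0$ normalized with $\int_\Om g u^2=1$. Since $w\le w_1$, the kernels satisfy $\log\frac{w w}{|x-y|}\le \log\frac{w_1w_1}{|x-y|}$, so $\langle\Lom_w u,u\rangle\le\langle\Lom_{w_1}u,u\rangle$. Since $g_1\le g$, also $\int g_1u^2\le 1$. Both numerators are positive, so
\[
\tau^+_{w,g}=\langle\Lom_w u,u\rangle\le \frac{\langle\Lom_{w_1}u,u\rangle}{\int g_1u^2}\le\tau^+_{w_1,g_1}.
\]
For the equality case, if equality holds then every inequality above is an equality. Because $u>0$ on $\overline{\Om}$, equality of the numerators forces
\[
\iint\bigl(\log(w_1(x)w_1(y))-\log(w(x)w(y))\bigr)u(x)u(y)=0
\]
with a nonnegative continuous integrand, hence $w_1(x)w_1(y)=w(x)w(y)$ for all $x,y$. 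Taking $x=y$ gives $w_1=w$. Similarly, $\int(g-g_1)u^2=0$ gives $g=g_1$.

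For (ii), extend a positive eigenfunction $u_1$ on $\Om_1$ by zero to $\Om$. The quotient is unchanged, and the hypothesis $\sqrt{\mathrm{diam}\,\Om}\le w$ holds on $\Om_1$ as well, since $\mathrm{diam}\,\Om_1\le\mathrm{diam}\,\Om$. This gives $\tau^+_{w,g}(\Om_1)\le\tau^+_{w,g}(\Om)$. If equality holds, the extension $\tilde u_1$ is a maximizer on $\Om$, hence an eigenfunction there. The same argument as in the first step then gives
\[
g\tilde u_1=\tfrac1{\tau}\Lom_w\tilde u_1>0 \quad\text{on all of } \Om,
\]
since the kernel is a.e. positive and $u_1>0$ on $\Om_1$ has positive measure. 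This contradicts $\tilde u_1=0$ on $\Om\setminus\Om_1$ unless $\Om\setminus\Om_1$ is empty; note that $\Om\setminus\Om_1$ being nonempty already suffices, because the positivity holds pointwise by continuity of $\Lom_w\tilde u_1$. Therefore $\Om=\Om_1$.

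The main obstacle is the first step, establishing strict positivity of the extremal eigenfunction. This rests on the a.e. positivity of the kernel, which requires the measure-zero claim for the boundary set $\{|x-y|=\mathrm{diam}\,\Om\}$, and on the continuity of $\Lom_w v$ for $v\in L^2$. The continuity is available from the proof of Theorem \ref{thm_infinite_ev}.
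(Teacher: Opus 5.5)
Your proof of (i) and of the inequality in (ii) follows the paper's proof. You re-derive Proposition~\ref{prop-non-neg-w} by the same $|u|$-comparison, then compare Rayleigh quotients using a positive eigenfunction. Your equality argument in (i), where continuity and positivity of $u$ force $w_1=w$ and $g_1=g$, is a slightly more explicit version of the paper's positive-measure argument. The step you call the main obstacle is not one: for $x,y$ in the open set $\Om$ one has $|x-y|<\mathrm{diam}(\Om)\le w(x)w(y)$, so the kernel is strictly positive everywhere on $\Om\times\Om$ and no measure-zero argument is needed.

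The genuine gap is in the equality case of (ii). The identity $g\tilde u_1=\frac{1}{\tau}\Lom_w\tilde u_1$ holds only almost everywhere. $\Lom_w\tilde u_1$ is continuous and positive, but the zero extension $\tilde u_1$ is just one representative of an $L^2$ class, so you get a contradiction only when $|\Om\setminus\Om_1|>0$. Your closing remark that nonemptiness of $\Om\setminus\Om_1$ already suffices is false. In fact the clause ``equality only if $\Om=\Om_1$'' cannot be proved as stated. Take $\Om_1=\Om\setminus\{x_0\}$, which is open and a proper subset; then $L^2(\Om_1)=L^2(\Om)$ and the quadratic forms coincide, so $\tau^+_{w,g}(\Om_1)=\tau^+_{w,g}(\Om)$. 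The paper's proof has the same blind spot when it applies Proposition~\ref{prop-non-neg-w} to the zero extension. What both arguments actually show is that equality forces $|\Om\setminus\Om_1|=0$. This implies $\Om\subseteq\mathrm{int}\,\overline{\Om_1}$, so you get $\Om_1=\Om$ only under an extra assumption such as $\Om_1=\mathrm{int}\,\overline{\Om_1}$.
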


The following theorem asserts certain continuity properties of $\tau_{w,g}^+(\Omega)$ with respect to weights $w, g$ and domain $\Omega$. 
\begin{theorem}\label{theorem_convergence}
  For $n\in\N_0,$ let $\Om_n$ be open sets in $\R^N$ such that 
    $ \Om_n\subseteq B_R$, for some $R>0$.  Let
    $w_n,g_n\in\mathcal{C}^+(\ol{B_R})$, $\forall\, n\in\N_0$. Further,  assume that 
    \begin{enumerate}[(i)]
        \item  $\chi_{\Omega_n}\to \chi_{\Omega_0}$ pointwise on $\ol{B_R}$,
        \item   $(w_n,g_n)\to (w_0,g_0)$ uniformly on $\ol{B_R}$.
    \end{enumerate}
     Then, $$\tau_{w_n,g_n}^+(\Omega_n)\to \tau_{w_0,g_0}^+(\Om_0).$$ 
\end{theorem}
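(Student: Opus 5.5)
We prove Theorem \ref{theorem_convergence} by obtaining a lower bound via a test function and an upper bound via a compactness argument, both based on the variational characterization \eqref{equivalent_definition}. Extend every function in $L^2(\Om_n)$ by zero to $B_R$. Then
\[
\tau_{w_n,g_n}^+(\Om_n)=\max_{u\in L^2(B_R)\setminus\{0\}}\frac{\iint_{B_R\times B_R} K_n(x,y)\chi_n(x)\chi_n(y)u(x)u(y)\,dx\,dy}{\int_{B_R} g_n\chi_n u^2},
\]
where $\chi_n=\chi_{\Om_n}$ and $K_n(x,y)=\log\bigl(w_n(x)w_n(y)/|x-y|\bigr)$. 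Uniform convergence of $w_n$ together with $w_0>0$ on $\ol{B_R}$ gives $\log w_n\to\log w_0$ uniformly. Hence $K_n-K_0\to0$ uniformly, and $|K_n|\le C+|\log|x-y||$, which is a fixed $L^2(B_R\times B_R)$ kernel. The bounds $g_n\to g_0$ uniformly and $g_0\ge c>0$ give $g_n\ge c/2$ for large $n$.

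\textbf{Lower bound ($\liminf$).} Let $u_0\in L^2(\Om_0)$ be an eigenfunction for $\tau^+_{w_0,g_0}(\Om_0)$, and use $u_0\chi_n$ as a test function. Since $\chi_n\to\chi_0$ pointwise and all quantities are dominated, dominated convergence applies. The quantity $\int g_n\chi_n u_0^2$ tends to $\int_{\Om_0}g_0u_0^2$. The double integral also converges, because $K_n\chi_n\otimes\chi_n\, u_0\otimes u_0$ converges pointwise a.e. and is dominated by $(C+|\log|x-y||)|u_0(x)||u_0(y)|$. This dominating function is integrable since the logarithmic kernel defines a bounded operator on $L^2(B_R)$, and indeed $\iint |\log|x-y||\,|u_0(x)||u_0(y)|<\infty$ by Schur/Young. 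This yields $\liminf_n\tau_n^+\ge\tau_0^+$.

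\textbf{Upper bound ($\limsup$).} Take a subsequence realizing the $\limsup$, with maximizers $u_n$ normalized by $\int g_n\chi_nu_n^2=1$ and supported in $\Om_n$. These are bounded in $L^2(B_R)$ because $g_n\ge c/2$, so after passing to a further subsequence $u_n\rightharpoonup u$ weakly in $L^2(B_R)$.

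1. \emph{Support of the limit.} Since $u_n(1-\chi_n)=0$, we have $\int u_n(1-\chi_0)\varphi=\int u_n(\chi_n-\chi_0)\varphi\to0$ for every $\varphi\in L^\infty$. Indeed $(\chi_n-\chi_0)\varphi\to0$ strongly in $L^2$ by dominated convergence, while $u_n$ is bounded. Therefore $u$ vanishes a.e. outside $\Om_0$.

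2. \emph{Numerator.} Write $\lb \Lom^{(n)}u_n,u_n\rb=\lb \mathcal K_0 u_n,u_n\rb+\lb(\mathcal K_n-\mathcal K_0)u_n,u_n\rb$, where $\mathcal K_n$ is the integral operator on $L^2(B_R)$ with kernel $K_n$. Here $\chi_n u_n=u_n$, so the characteristic functions may be dropped. The error term is at most $\|K_n-K_0\|_\infty|B_R|\,\|u_n\|^2\to0$. Since $\mathcal K_0$ is compact, $\mathcal K_0u_n\to\mathcal K_0u$ strongly, and so $\lb\mathcal K_0u_n,u_n\rb\to\lb \mathcal K_0u,u\rb=\lb\Lom_{w_0}u,u\rb_{\Om_0}$.

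3. \emph{Denominator.} By weak lower semicontinuity, $\int_{\Om_0}g_0u^2\le\liminf\int g_0u_n^2=\liminf\int g_nu_n^2=1$, using $\|g_n-g_0\|_\infty\to0$.

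\textbf{Combining and the main obstacle.} The delicate case is when the denominator drops or $u=0$, so that we cannot simply divide by it. Because $\tau_n^+\ge\tau_n^+$ of a small fixed ball near a point of $\Om_0$ is eventually positive, the lower bound already gives $\lim\sup\tau_n^+\ge\tau_0^+>0$. The positivity of $\tau_0^+$ itself follows from Theorem \ref{thm_infinite_ev} and the fact that the kernel is positive for close points.

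To handle the denominator, use the variational inequality $\lb\Lom_{w_0}u,u\rb\le\tau_0^+\int_{\Om_0}g_0u^2\le\tau_0^+$. This holds even when $u=0$, and it uses $\tau_0^+>0$ together with the denominator bound $\le1$. Combined with step 2, it gives $\limsup\tau_n^+=\lb\Lom_{w_0}u,u\rb\le\tau_0^+$.

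The key step is therefore to rely on the positivity of $\tau_0^+$ to absorb the possible loss of mass in the weak limit, rather than dividing by the limit denominator. A secondary check is that the compactness of $\mathcal K_0$ on $L^2(B_R)$ really follows from Proposition \ref{prop_Lom_compact} applied to the set $B_R$.
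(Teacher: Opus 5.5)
Your proof is correct, but it takes a different route from the paper's.

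\textbf{The paper's route.} The paper never takes weak limits. It works with the truncated operators $\mathcal{L}_n$ on $L^2(B_R)$, whose kernels are $l_n(x,y)=\chi_{\Om_n}(x)\chi_{\Om_n}(y)\log\bigl(w_n(x)w_n(y)/|x-y|\bigr)$. It then cross-tests: the normalized eigenfunction of problem $n$ is inserted into the Rayleigh quotient of problem $0$, and vice versa. This gives a two-sided bound on $\tau^+_{w_n,g_n}(\Om_n)-\tau^+_{w_0,g_0}(\Om_0)$ by $\frac{1}{m}\|l_n-l_0\|_{L^2(B_R\times B_R)}$ plus a term controlled by $\|g_n-g_0\|_\infty$. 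The decisive observation is that $l_n\to l_0$ pointwise with the square-integrable majorant $C+|\log|x-y||$. By dominated convergence, the truncated operators therefore converge in Hilbert--Schmidt norm, even though $\chi_{\Om_n}\to\chi_{\Om_0}$ only pointwise.

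\textbf{Your route.} Your lower bound is essentially one half of this cross-testing, with the domain change placed on the test function $u_0\chi_{\Om_n}$ instead of in the kernel. For the upper bound, you used uniform convergence only for the untruncated kernels $K_n$. You then had to absorb the moving domain separately by the direct method:
\begin{enumerate}[(a)]
\item showing the weak limit is supported in $\Om_0$;
\item using compactness of $\mathcal{K}_0$;
\item using weak lower semicontinuity of the denominator;
\item using $\tau^+_{w_0,g_0}(\Om_0)>0$ to absorb a possible loss of mass.
\end{enumerate}
Had you applied dominated convergence to the truncated kernels $\chi_{\Om_n}\otimes\chi_{\Om_n}K_n$, your steps 1--3 would have collapsed into a single operator-norm estimate.

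\textbf{Comparison.} The paper's argument is shorter and gives an explicit rate. The paper reuses it verbatim for $\tau^-$. Because it amounts to norm convergence of the (symmetrized) operators, it also extends naturally beyond the top eigenvalue. Your argument is the standard qualitative liminf/limsup template. Its merit is that it makes transparent exactly where positivity of $\tau^+$ is needed. The paper uses that positivity too, implicitly, when it allows test functions on $B_R$ that carry mass outside $\Om_n$.

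\textbf{Minor point.} Your remark about a small fixed ball near a point of $\Om_0$ is not justified: pointwise convergence of $\chi_{\Om_n}$ does not force a fixed ball to lie in $\Om_n$ for large $n$. Nothing in your argument depends on it, since you only use $\tau^+_{w_0,g_0}(\Om_0)>0$, and you justify that separately.
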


\subsubsection{Reverse Faber-Krahn inequality for $\tau_{w,g}^+$} 
 Recall that for a bounded domain $\Om\subset\R^N$, the first Dirichlet eigenvalue $\la_1(\Om)$ of the Laplacian satisfies the following Faber-Krahn inequality: 
    \begin{equation}\label{FK} 
        \la_1(\Omega^*)\leq \la_1(\Omega), 
    \end{equation} 
  where $\Omega^*$ is the open ball centered at the origin with the same measure as $\Omega$. For doubly connected planar domains, Payne and Weinberger \cite{Payne1961}, along with Hersch \cite{Hersch1963}, established the following reverse-type inequality for the first eigenvalue of the Laplacian under Dirichlet-Neumann boundary conditions:
  \begin{equation}\label{RFK} 
        \la_1(\Omega)\leq \la_1(\Omega^\#), 
    \end{equation} 
  where $\Omega^\#$ is an annular region that has the same measure as $\Omega$ and a fixed inner or outer perimeter depending on the Dirichlet boundary.  In the works \cite{AnoopAshok2020,Anoop_Ghosh_2024}, these findings have been extended to the first eigenvalue of the \(p\)-Laplacian under Dirichlet-Neumann boundary conditions in higher-dimensional domains containing holes. Similar results for the Laplacian with other boundary conditions (Neumann and Robin) have also been established in \cite{AnoopDrabekBobkov2025}.
  
 In \cite[Theorem 1.3]{Anoop-Ashok2023}, the authors proved a Faber-Krahn inequality under polarization, namely, 
\begin{equation*}
    \la_1(P_H(\Omega))\leq \la_1(\Omega),
\end{equation*}
where $P_H(\Omega)$ denotes the polarization of $\Omega$ (see Definition~\ref{def_pol}). In \cite[Theorem 1.6 $\&$ Theorem 1.8]{anoopjiya2025}, we  established the following reverse Faber–Krahn inequalities for the largest eigenvalue $\tau_{1,1}^+$ of $\Lom$, under polarization: 
\begin{equation}\label{logpolresult}
\tau_{1,1}^+(\Omega)\leq \tau_{1,1}^+(P_H(\Omega)), 
\end{equation} 
and also under Schwarz symmetrization: 
\begin{equation} 
\tau_{1,1}^+(\Omega) \leq \tau_{1,1}^+(\Omega^*). 
\end{equation}
We study analogous results for $\tau_{w,g}^+$. First, we recall the definition of polarization.

\begin{definition}\label{def_pol}{\textbf{(Polarization).}}
    A polarizer $H$ is an open affine halfspace in $\mathbb{R}^N.$ Let us denote the reflection with respect to the boundary $\partial H$ by $\sigma_H$. For a set $\Omega$ in $\R^N,$  the polarization $P_H(\Omega)$ with respect to $H$ is defined as follows:
    \begin{equation*}
        P_H(\Omega)=[(\Omega\cup \sigma_H(\Omega))\cap H]\cup [\Omega\cap\sigma_H(\Omega)]
    \end{equation*}
\end{definition}

 \begin{minipage}[t]{0.475\textwidth} 
      \begin{center}
        \captionsetup{type=figure}
            \begin{tikzpicture}
            \def\square1{(-2,-2) rectangle (2,2)};
                 \def\rect{(-3,0) rectangle (3,3)}
                    \fill[vlgray] \rect;
                    
                    \fill[dgray,rotate=-30] \square1;
                    \draw[black,rotate=-30] \square1;
        
                  \draw[-, dashed] (-3,0) -- (3,0);
                  \begin{scriptsize}   
                     \draw (-1.40,2) node {$\Om$};
                     \draw (2.75,2.75) node {$H$};
                 \end{scriptsize}
            \end{tikzpicture}
            \captionof{figure}{$\Om$}
        \end{center}
  \end{minipage}
  \begin{minipage}[t]{0.525\textwidth} 
    \begin{center}
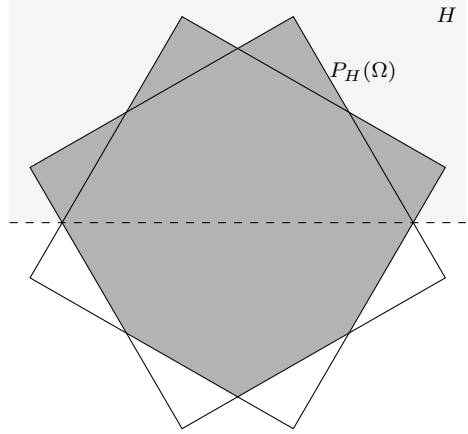

        \captionsetup{type=figure}
        \begin{tikzpicture}
        \def\square1{(-2,-2) rectangle (2,2)};        
             \def\rect{(-3,0) rectangle (3,3)}
                \fill[vlgray] \rect;
                \begin{scope}
                      \clip \rect;
                    \fill[dgray,rotate=30] \square1;
                    \fill[dgray,rotate=-30] \square1;
                \end{scope}
            
            \fill[dgray] (-2.31,0) -- (-1.46,-1.46) -- (0,-2.31) -- (1.46,-1.46) -- (2.31,0) -- cycle;
                \draw[black,rotate=-30] \square1;
                \draw[black,rotate=30]\square1;
    
              \draw[-, dashed] (-3,0) -- (3,0);
              \begin{scriptsize}   
                 \draw (1.65,2) node {$P_H(\Omega)$};
                 \draw (2.75,2.75) node {$H$};
             \end{scriptsize}
        \end{tikzpicture}
        \captionof{figure}{  $P_H(\Omega)$}
    \end{center}
 \end{minipage}

\begin{definition}{\textbf{(Polarization of a function).}}
    For a measurable function $u:\mathbb{R}^N\rightarrow \mathbb{R}$, the polarization $P_H(u)$ with respect to $H$ is defined as 
   \begin{equation*}
       P_H(u)(x)=\begin{cases}
           \max\{u(x),u(\sigma_H(x))\},& \text{   for }x\in H,\\
           \min\{u(x),u(\sigma_H(x))\},& \text{   for }x\in \mathbb{R}^N\setminus H.
       \end{cases}
   \end{equation*}
   Let $u:\Omega\rightarrow \mathbb{R}$ and let $\widetilde{u}$ be its zero extension to $\mathbb{R}^N$. Polarization $P_H(u)$ is defined as the restriction of $P_H(\widetilde{u})$ to $P_H(\Omega)$. 
\end{definition}
 Next, we state the reverse Faber–Krahn inequality for the largest eigenvalue $\tau_{w,g}^+$ under polarization. Note that $|S|$ denotes the Lebesgue measure of any measurable set $S$.
\begin{theorem}\label{weightedfk}
    Let $\Omega \subset \R^N$ be a bounded open set, and let $H$ be a polarizer. Let  $w,g\in \mathcal{C}^+(\ol{\Om\cup P_H(\Om)})$ be such that  $w(x)=w(\sigma_H(x))$ and $g(x)=g(\sigma_H(x))$ for all $x\in\Om\cup\sigma_H(\Om)$. If an eigenfunction corresponding to $\tau_{w,g}^+(\Om)$ is positive on $\Om\setminus \sigma_H(\Om)$, then
    \begin{equation}\label{pollog}
            \tau_{w,g}^+(\Omega)\leq \tau_{w,g}^+(P_H(\Omega)).
        \end{equation}
         The equality holds in $\eqref{pollog}$ only if $|P_H(\Omega)\triangle\Omega|=0$ or $|P_H(\Omega)\triangle\sigma_H(\Omega)|=0$, where $\triangle$ is the symmetric difference of the sets.
\end{theorem}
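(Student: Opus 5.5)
Let $u$ be an eigenfunction for $\tau_{w,g}^+(\Om)$ that is positive on $\Om\setminus\sigma_H(\Om)$. We use $P_H(u)$ as a test function in the variational characterization \eqref{equivalent_definition} on $P_H(\Om)$. This requires three facts:
\[
\int_{P_H(\Om)} g\,(P_Hu)^2=\int_\Om g u^2 \qquad\text{and}\qquad \lb \Lom_w u,u\rb_{\Om}\le \lb \Lom_w P_Hu,P_Hu\rb_{P_H(\Om)},
\]
with a strictness analysis for the second. First we reduce to $u\ge 0$. The eigenfunction of $\tau^+$ can be replaced by $|u|$ only if the kernel is positive, and that is not assumed here. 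So instead we work with the zero extension $\widetilde u$ on $\Om\cup\sigma_H(\Om)$. On $\Om\setminus\sigma_H(\Om)$ we have $\widetilde u>0$, and at the reflected points $\widetilde u=0$. Hence on the symmetric-difference region polarization just moves the mass into $H$, and this matches the set polarization $P_H(\Om)$. Therefore $P_H(\widetilde u)$ vanishes outside $P_H(\Om)$ and is a legitimate element of $L^2(P_H(\Om))$.

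\textbf{Equimeasurability of the weighted norm.} For each pair $\{x,\sigma_H x\}$ with $x\in H$, polarization permutes the values $\{\widetilde u(x),\widetilde u(\sigma_H x)\}$. Since $g(x)=g(\sigma_H x)$, the sum $g(x)\widetilde u(x)^2+g(\sigma_Hx)\widetilde u(\sigma_Hx)^2$ is invariant. Integrating over $H$ gives the identity for the denominator.

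\textbf{The numerator.} Write the quadratic form as
\[
\iint \log\frac{1}{|x-y|}\widetilde u(x)\widetilde u(y)\,dx\,dy+2\int \log w(x)\,\widetilde u(x)\,dx\int \widetilde u(y)\,dy.
\]
Both $\int\widetilde u$ and $\int \log w\,\widetilde u$ are invariant under polarization, by the same pairwise permutation argument and the symmetry of $w$. So the second term is unchanged. For the first term, fold the double integral over $H\times H$ and consider the four points $x,\sigma_Hx,y,\sigma_Hy$. With $K(r)=\log(1/r)$ decreasing, we have $|x-y|=|\sigma_Hx-\sigma_Hy|\le|x-\sigma_Hy|$ for $x,y\in H$. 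The standard two-point rearrangement inequality (as in \cite{anoopjiya2025}, or Baernstein–Taylor) then gives
\[
\iint K(|x-y|)\widetilde u(x)\widetilde u(y)\le\iint K(|x-y|)P_H\widetilde u(x)P_H\widetilde u(y).
\]
In the pairwise inequality, $a(K_1-K_2)\cdot(\text{rearrangement gain})$, we must check that the gain has the right sign even when $u$ changes sign. On the pairs where both values are defined, the pairwise inequality $ab+cd\le \max\min+\ldots$ holds for arbitrary reals. Indeed, the two-point inequality $K_1(ab+cd)+K_2(ad+cb)\le K_1(a'b'+c'd')+K_2(a'd'+c'b')$ with $K_1\ge K_2$ reduces to $(K_1-K_2)\bigl[(a'b'+c'd')-(ab+cd)\bigr]\ge0$, which holds for all real entries. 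Here the log kernel can be negative, but only the difference $K_1-K_2\ge0$ matters. Combining the three facts yields \eqref{pollog}.

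\textbf{Equality case (main obstacle).} If equality holds, then $P_H\widetilde u$ attains the maximum, so it is an eigenfunction on $P_H(\Om)$ and equality holds in the two-point inequality a.e. Since $K_1-K_2>0$ strictly for $x,y\in H$ off $\partial H$, for a.e. $x,y\in H$ we get
\[
(\widetilde u(x)-\widetilde u(\sigma_Hx))(\widetilde u(y)-\widetilde u(\sigma_Hy))\ge0 .
\]
Hence either $\widetilde u\ge\widetilde u\circ\sigma_H$ a.e. on $H$, or $\widetilde u\le\widetilde u\circ\sigma_H$ a.e. on $H$. In the first case $P_H\widetilde u=\widetilde u$; in the second, $P_H\widetilde u=\widetilde u\circ\sigma_H$. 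Positivity of $u$ on $\Om\setminus\sigma_H(\Om)$ identifies the supports in terms of sets. In the first case it forces $(\Om\setminus\sigma_H\Om)\cap(\R^N\setminus H)$ to be null, giving $|P_H(\Om)\triangle\Om|=0$. In the second case, symmetrically, $|P_H(\Om)\triangle\sigma_H(\Om)|=0$. The delicate points are two. First, handling the zero set of $u$ inside $\Om\cap\sigma_H\Om$; there only the values matter, not the domain. Second, ensuring the strict sign of the product on sets of positive measure. I would argue the latter by contradiction: if both $\{\widetilde u>\widetilde u\circ\sigma_H\}\cap H$ and $\{\widetilde u<\widetilde u\circ\sigma_H\}\cap H$ have positive measure, the Fubini product of these sets gives a strict gain.
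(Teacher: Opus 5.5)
Your proof is correct and follows the paper's route. You use the polarized zero extension $P_H(\widetilde u)$ as the test function on $P_H(\Om)$, invoke $g$-symmetry for the denominator and positivity on $\Om\setminus\sigma_H(\Om)$ to see that $P_H(\widetilde u)$ vanishes off $P_H(\Om)$, then apply the two-point Riesz-type inequality, and for equality use the dichotomy $P_H\widetilde u=\widetilde u$ or $P_H\widetilde u=\widetilde u\circ\sigma_H$ a.e.\ together with positivity. The only differences are cosmetic: you split off the $\log w(x)+\log w(y)$ part as polarization-invariant and apply the unweighted inequality, whereas the paper keeps $w$ in the kernel and uses $w(y)=w(\sigma_H y)$ to get $K(x,y)>K(x,\sigma_H y)$ directly; and the paper defers the equality case to its earlier work, which you write out explicitly.
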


\begin{remark}
    If  $ \sqrt{\mathrm{diam}(\Omega)}\leq w$ in $\Om$, in Proposition \ref{prop-non-neg-w} we establish that  $\tau_{w,g}^+(\Om)$ admits a positive  eigenfunction on entire $\Om$. So, the conclusion of the above theorem holds if we replace the positivity assumption on the eigenfunction with the assumption $ \sqrt{\mathrm{diam}(\Omega)}\leq w$.  
\end{remark}  
\begin{remark}
    Let $\Omega \subset \R^N$ be a bounded open set. If $w = g = c$, and  $\sqrt{\mathrm{diam}(\Omega)}\leq c$, for some constant $c$, then by applying \eqref{pollog} repeatedly, one can obtain the reverse Faber-Krahn inequality for $\tau_{w,g}^+$ under Schwarz symmetrization, i.e.
    \begin{equation} 
        \tau_{w,g}^+(\Omega)\leq \tau_{w,g}^+(\Omega^*) .
    \end{equation}
    The proof of this inequality follows the same lines as that of \cite[Theorem 1.8]{anoopjiya2025}, which deals with the case $N=2$ and $w=g=1$.
\end{remark}

Next, we discuss the existence of a negative eigenvalue for \eqref{evproblem}.
\subsection{The smallest negative eigenvalue of \eqref{evproblem}} 
In contrast to $\tau_{w,g}^+(\Om)$, now we define
\begin{equation}\label{equivalent_definition_neg}
     \tau_{w,g}^-(\Om)=\inf_{\substack{u \in L^2(\Omega) \\ u \neq 0}}
 \frac{\lb \Lom_w u,u \rb}{\int_\Om gu^2}.
\end{equation}
Equivalently,
\begin{align}\label{weightedcharneg}
      \tau_{w,g}^-(\Om)&=\inf_{u\in \mathcal{M}_g} \lb \Lom_w u,u \rb,\quad \text{where}\quad \mathcal{M}_g=\left\{u\in L^2(\Omega):\int_\Om gu^2=1  \right\}.
\end{align}
 If $\Omega$ is such that $\Lom_w$ is a positive operator on $L^2(\Omega)$, then it follows that $\tau_{w,g}^-(\Omega) = 0$ (see Proposition \ref{prop_tau_tilde_zero}). On the other hand, if $\Lom_w$ is not a positive operator on $L^2(\Omega)$, then $\tau_{w,g}^-(\Omega)$ is negative, and in fact it is the smallest negative eigenvalue of \eqref{evproblem}, by Theorem \ref{thm_infinite_ev}. 

In \cite{troutman1967}, for $\Om\subset\R^2$, Troutman related the existence of a negative eigenvalue for the operator $\Lom$ in terms of a geometric quantity known as the transfinite diameter of $\Om$. More precisely, in \cite[Theorem 3]{troutman1967}, the author stated that $\Lom$ has a negative eigenvalue if and only if $T_{diam}(\Om)$ (the transfinite diameter of $\Om$) is greater than 1. However, his proof has a flaw that cannot be rectified immediately. In 1970, Kac \cite{Kac1970} provided some probabilistic arguments to prove Troutman's theorem. In \cite[Theorem 1.12]{anoopjiya2025}, we provided an alternative analytic proof for this theorem. Here, we establish an analogous result for $\Lom_w$ in terms of the weighted transfinite diameter (for definition, see Subsection~\ref{subsec_weighted_trans}) of an open set $\Omega$ in $\R^N$ for any $N\ge 2$. The following theorem provides a sufficient condition for the existence of a negative eigenvalue.
 
\begin{theorem}\label{theorem_weighted_trans}
    Let $\Omega\subset\R^N$ be a bounded open set, and let $w \in \mathcal{C}^+(\overline{\Omega})$ be such that $\log w$ is superharmonic on $\Om$. If $T_{diam}\left(\ol{\Om},\frac{1}{w}\right)> 1$, then $\Lom_w$ admits a negative eigenvalue.
\end{theorem}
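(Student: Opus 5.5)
We will show that $\Lom_w$ is not a positive operator on $L^2(\Om)$. By Proposition~\ref{prop_tau_tilde_zero} and Theorem~\ref{thm_infinite_ev} (with $g\equiv 1$), this gives $\tau_{w,1}^-(\Om)<0$, which is then a negative eigenvalue. So the goal is to produce $u\in L^2(\Om)$ with $\lb \Lom_w u,u\rb<0$.

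The plan passes through the weighted energy of Saff \cite{saff1992paper}. For a Borel probability measure $\mu$ on $\ol{\Om}$, set
\begin{equation}
I_w(\mu)=\iint \log\left(\frac{w(x)w(y)}{|x-y|}\right)d\mu(x)\,d\mu(y).
\end{equation}
With the weight $1/w$ in the convention of Subsection~\ref{subsec_weighted_trans}, the equivalence of the weighted transfinite diameter with the weighted capacity gives
\begin{equation}
-\log T_{diam}\left(\ol{\Om},\tfrac1w\right)=\inf_{\mu} I_w(\mu),
\end{equation}
up to the sign and normalisation conventions of that subsection. The hypothesis $T_{diam}>1$ then yields a probability measure $\mu$ on $\ol\Om$ with $I_w(\mu)<0$. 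One could take the weighted equilibrium measure, or more simply Fekete-type discrete measures with an appropriate regularisation.

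Such a $\mu$ may charge $\partial\Om$ or be singular, so it cannot be used directly as an $L^2(\Om)$ test function. The main step, and the main obstacle, is approximating $\mu$ by densities $u_\varepsilon\,dx$ with $u_\varepsilon\in L^2(\Om)$ in such a way that
\begin{equation}
\limsup_{\varepsilon\to 0} \lb \Lom_w u_\varepsilon,u_\varepsilon\rb\le I_w(\mu).
\end{equation}
This is where superharmonicity of $\log w$ enters. The idea is to replace $\mu$ by averages over small balls, i.e. balayage or mollification, but only inside $\Om$. I would proceed in two stages.

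\emph{Interior case.} For measures supported in compact subsets of $\Om$, mollify with radial mollifiers: $\mu_\varepsilon=\mu*\rho_\varepsilon$. The kernel $\log\frac{1}{|x-y|}$ is superharmonic in each variable for $N=2$, and in fact for every $N\ge 2$, since $\Delta\log\frac1{|x|}=-(N-2)/|x|^2\le0$. Hence its ball averages are at most its values, so
\begin{equation}
\iint\log\frac1{|x-y|}\,d\mu_\varepsilon\,d\mu_\varepsilon\le \iint\log\frac1{|x-y|}\,d\mu\,d\mu.
\end{equation}
Similarly, superharmonicity of $\log w$ gives $\int\log w\,d\mu_\varepsilon\le\int\log w\,d\mu$, because $\rho_\varepsilon*\log w\le\log w$. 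Combining the two estimates gives $I_w(\mu_\varepsilon)\le I_w(\mu)<0$, with $\mu_\varepsilon$ having a bounded density.

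\emph{Boundary case.} To reach the interior case, I would first show that the infimum of $I_w$ over measures on $\ol\Om$ can be approximated by measures supported on compact sets $K\subset\Om$. My first attempt would be to argue via the Fekete points characterisation that the transfinite diameter of $\ol\Om$ equals the limit of those of the compact sets $\ol{\Om_\delta}=\{x\in\Om:\mathrm{dist}(x,\partial\Om)\ge\delta\}$ as $\delta\to0$. If that fails because $\partial\Om$ is irregular, the fallback is to push boundary mass slightly inward by the same ball-averaging trick applied to balls that are not centred at the mass point. This needs care, and it is the step where I expect the real difficulty of the proof to lie.

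Finally, note that $\lb\Lom_w u_\varepsilon,u_\varepsilon\rb=I_w(\mu_\varepsilon)<0$ because $u_\varepsilon$ is the density of the probability measure $\mu_\varepsilon$. So $\Lom_w$ is not positive, and the conclusion follows.
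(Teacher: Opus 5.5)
Your route is the paper's, but both stall at the same step, and that step cannot be closed as the theorem is stated. You build a test function with $\lb \Lom_w u,u\rb<0$ by averaging a configuration of negative weighted energy over small balls inside $\Om$. This uses superharmonicity of both $\log\frac{1}{|x-y|}$ and $\log w$, after first replacing $\ol{\Om}$ by an interior compact set $\ol{\Om}_\delta$.

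Your interior step is correct. Mollifying a finite-energy $\mu$ with $I_w(\mu)<0$ is a continuous version of the paper's balls of radius $r_n$ around weighted Fekete points of $\ol{\Om}_{\epsilon_0}$. It gives $I_w(\mu_\varepsilon)\le I_w(\mu)$ directly, with no self-interaction estimate.

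The gap is the boundary step, which you leave open. Your ``first attempt'' is exactly what the paper asserts by invoking Proposition~\ref{thm:weighted_diam_limit}. That proposition requires $\bigcup_n E_n=E$, whereas $\bigcup_{\delta>0}\ol{\Om}_\delta=\Om\neq\ol{\Om}$. Its proof needs $\nu(E_n)\to 1$ for the equilibrium measure $\nu$ of $\ol{\Om}$, but this measure need not charge $\Om$ at all. For $N=2$ and $w\equiv 1$ it is carried by the outer boundary of $\ol{\Om}$, so $\nu(\Om)=0$. Your fallback cannot help either: the mean value inequality for superharmonic functions holds only on spheres centred at the point.

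No argument can close this step for arbitrary bounded open sets, because the statement fails as written. Take $N=2$, $w\equiv 1$, a dense sequence $(q_j)$ in $B_2$, and $\Om=\bigcup_j B_{r_j}(q_j)$. Choose $r_j<\mathrm{dist}(q_j,\partial B_2)$ and $\sum_j 1/\log(4/r_j)<1/\log 4$, for instance $r_j\le 4e^{-2^{j+2}}$. Then $\ol{\Om}=\ol{B_2}$, so $T_{diam}(\ol{\Om},1)=2>1$.

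However, every compact $K\subset\Om$ lies in finitely many discs $\ol{B_{r_j}(q_j)}$. The subadditivity estimate $1/\log(4/c(K))\le\sum_j 1/\log(4/r_j)$ for logarithmic capacity $c=T_{diam}$ applies (cf.\ \cite{Ransford1995}; all sets involved have diameter at most $4$). It gives $c(K)<1$, i.e.\ $V(K):=-\log c(K)>0$.

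Now let $u\in L^2(\Om)$ be supported in such a $K$, put $t=\int u$, and write $u\,dx=t\nu_K+\eta$, where $\nu_K$ is the equilibrium measure of $K$. By Frostman's theorem the potential of $\nu_K$ equals $V(K)$ on $K$ up to a polar, hence Lebesgue-null, set. So the cross term vanishes and $\lb \Lom u,u\rb=t^2V(K)+\iint\log\frac{1}{|x-y|}\,d\eta(x)\,d\eta(y)\ge 0$. The last integral is nonnegative because $\eta$ has total mass zero. By density, $\Lom\ge 0$ on $L^2(\Om)$, so there is no negative eigenvalue.

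What your argument, and the paper's, actually proves is the theorem under the inner hypothesis $\sup\{T_{diam}(K,\frac1w): K\subset\Om\ \text{compact}\}>1$. It also proves the stated version under extra regularity making this supremum equal to $T_{diam}(\ol{\Om},\frac1w)$. An example is $t\ol{\Om}\subset\Om$ for all $t\in(0,1)$ after a translation: scaling the Fekete points and using uniform continuity of $w$ then gives the required convergence.
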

Indeed, the above theorem extends Theorem 3 of Troutman \cite{troutman1967} as well as the corresponding result of Kac \cite[p. 595]{Kac1970}. 
 \begin{openproblem}For a polarizer $H$, the  reverse Faber–Krahn type inequality for $\tau_{w,g}^-$
     \begin{equation}
         \tau_{w,g}^-(\Om) \leq \tau_{w,g}^-(P_H(\Om)), 
     \end{equation}
holds or not is an open problem, even for $w=g=1.$
 \end{openproblem}
 
\par Next, we study the convergence of negative eigenvalues $\tau_{w_n,g_n}^-(\Omega_n)$ under some suitable convergence of $\Om_n,w_n$ and $g_n$. For $N=2$ and  $w_n=g_n=1$, in \cite[Theorem 2]{troutman1969}, the author studied the convergence with respect to  $\Omega_n$. In this direction, we have the following theorem.

\begin{theorem}\label{theorem_convergence_negative}
    For $n\in\N_0,$ let $\Om_n$ be open sets in $\R^N$ such that 
    $ \Om_n\subseteq B_R$, for some $R>0$.  Let
    $w_n,g_n\in\mathcal{C}^+(\ol{B_R})$,  and let $\log w_n$ be superharmonic on $\ol{B_R}$. Further,  assume that 
    \begin{enumerate}[(i)]        
        \item  $T_{diam}\left(\Omega_n, \frac{1}{w_n}\right) > 1,\quad \forall\, n\in\N_0.$
        \item  $\chi_{\Omega_n}\to \chi_{\Omega_0}$ pointwise on $\ol{B_R}$.
        \item  $(w_n,g_n)\to (w_0,g_0)$ uniformly on $\ol{B_R}$.        
    \end{enumerate}
     Then, $$\tau_{w_n,g_n}^-(\Omega_n)\to \tau_{w_0,g_0}^-(\Omega_0).$$ 
\end{theorem}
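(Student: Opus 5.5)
The proof will be a $\Gamma$-convergence type argument: a limsup inequality via a recovery sequence, and a liminf inequality via weak compactness of minimizers. First I would extend every function by zero to $B_R$, so that all the operators act on the fixed space $L^2(B_R)$. Set
\[
K_n(x,y)=\chi_{\Om_n}(x)\chi_{\Om_n}(y)\log\frac{w_n(x)w_n(y)}{|x-y|}.
\]
By hypothesis (i) and Theorem \ref{theorem_weighted_trans}, each $\Lom_{w_n}$ has a negative eigenvalue on $L^2(\Om_n)$. Hence $\tau_n:=\tau^-_{w_n,g_n}(\Om_n)<0$ is attained, by Theorem \ref{thm_infinite_ev}, at some $u_n\in\mathcal{M}_{g_n}$. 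The $g_n$ are uniformly bounded above and below, so $\|u_n\|_{L^2}$ is uniformly bounded.

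The key technical step is to show that the operators
\[
A_n u(x)=\int_{B_R}K_n(x,y)u(y)\,dy
\]
converge to $A_0$ in operator norm on $L^2(B_R)$, or at least in Hilbert--Schmidt norm. Write
\[
K_n=\chi_{\Om_n}\otimes\chi_{\Om_n}\,\bigl(\log w_n(x)+\log w_n(y)+\log|x-y|^{-1}\bigr).
\]
The kernel $\log|x-y|^{-1}$ lies in $L^2(B_R\times B_R)$. Also $\chi_{\Om_n}(x)\chi_{\Om_n}(y)\to\chi_{\Om_0}(x)\chi_{\Om_0}(y)$ pointwise, and $\log w_n\to\log w_0$ uniformly, because the $w_n$ are uniformly positive (they converge uniformly to a positive continuous $w_0$). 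Dominated convergence then gives $K_n\to K_0$ in $L^2(B_R\times B_R)$. Hence $\|A_n-A_0\|\to0$, and moreover $A_n u_n\rightharpoonup$ and $\langle A_nu_n,u_n\rangle-\langle A_0u_n,u_n\rangle\to0$.

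For the upper bound, I take a minimizer $u_0\in\mathcal{M}_{g_0}$ for $\tau_0$ and use the test functions $v_n=\chi_{\Om_n}u_0/(\int g_n\chi_{\Om_n}u_0^2)^{1/2}$. By dominated convergence, $\int g_n\chi_{\Om_n}u_0^2\to\int_{\Om_0}g_0u_0^2=1$. Moreover $\chi_{\Om_n}u_0\to u_0$ in $L^2$, since $\chi_{\Om_n}\chi_{\Om_0}\to\chi_{\Om_0}$. Therefore $\langle A_nv_n,v_n\rangle\to\tau_0$, which gives $\limsup\tau_n\le\tau_0$.

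For the lower bound, I pass to a subsequence with $\tau_n\to\liminf\tau_n$ and $u_n\rightharpoonup u$ weakly in $L^2(B_R)$. Since $u_n=\chi_{\Om_n}u_n$ and $\chi_{\Om_n}\to\chi_{\Om_0}$ pointwise, testing against $\chi_{B_R\setminus\Om_0}\varphi$ shows that $u$ vanishes outside $\Om_0$. Because $A_0$ is compact, $\langle A_nu_n,u_n\rangle\to\langle A_0u,u\rangle$. Weak lower semicontinuity, together with the uniform convergence $g_n\to g_0$, gives $\int g_0u^2\le\liminf\int g_nu_n^2=1$.

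The main obstacle is that the normalization may be lost in the weak limit. It is handled by the sign of the eigenvalues:
\begin{itemize}
\item By the upper bound, $\lim\tau_n\le\tau_0<0$, so $\langle A_0u,u\rangle=\lim\tau_n<0$, which forces $u\neq0$.
\item Since $0<\int_{\Om_0}g_0u^2\le1$ and the numerator is negative, $\tau_0\le\langle A_0u,u\rangle/\int g_0u^2\le\langle A_0u,u\rangle=\lim\tau_n$.
\end{itemize}
This gives $\liminf\tau_n\ge\tau_0$. Applying the argument to every subsequence yields convergence of the whole sequence. Here the hypothesis $T_{diam}>1$ is used only to guarantee $\tau_n<0$ for every $n$. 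This includes $n=0$, which is what makes the limsup bound strictly negative. The superharmonicity of $\log w_n$ enters only through Theorem \ref{theorem_weighted_trans}.
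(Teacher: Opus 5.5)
Your proof is correct, and its backbone matches the paper's. Both extend by zero to $B_R$ and obtain $l_n\to l_0$ in $L^2(B_R\times B_R)$ by dominated convergence, so $\|A_n-A_0\|\to0$. Your upper bound, testing the limit minimizer in the $n$-th problem, is also essentially the paper's.

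The genuine difference is the lower bound. The paper does not pass to weak limits. It tests $\tilde\psi_n$ directly in the $0$-th problem, using the enlarged characterization \eqref{tau_def_in_ball_neg}. That characterization lets the infimum run over all $v\in L^2(B_R)$ with $\int_{B_R}g_0v^2=1$. This is legitimate for two reasons: the kernel $l_0$ already cuts off outside $\Omega_0$, and replacing $\int_{\Omega_0}g_0v^2$ by the larger $\int_{B_R}g_0v^2$ is harmless because $\tau_0^-\le0$. This gives the two-sided estimate \eqref{combinedinequality_neg}. It yields a quantitative bound $|\tau_n-\tau_0|\le\frac1m\|l_n-l_0\|_{L^2}+O(\epsilon)$, with no subsequences, no identification of the support of a weak limit, and no use of the compactness of $A_0$.

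Your direct-method route (weak compactness, support in $\Omega_0$, compactness of $A_0$) reaches the same conclusion but does more work than necessary. Once you have norm convergence, you can write directly
$\tau_n=\langle A_0u_n,u_n\rangle+o(1)\ge\tau_0\int_{\Omega_0}g_0u_n^2+o(1)\ge\tau_0\int_{B_R}g_0u_n^2+o(1)=\tau_0+o(1)$.

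Also, your treatment of the lost normalization needs only $\tau_0\le0$, not $\tau_0<0$. The inequality $\langle A_0u,u\rangle\ge\tau_0\int_{\Omega_0}g_0u^2\ge\tau_0$ holds even when $u=0$, so the case $u\neq0$ need not be forced. Since $\tau^-\le0$ always (Proposition \ref{prop_tau_tilde_zero}), working with approximate minimizers instead of eigenfunctions makes both your argument and the paper's go through without hypothesis (i). That hypothesis, together with the superharmonicity behind Theorem \ref{theorem_weighted_trans}, serves only to make the $\tau^-$'s attained negative eigenvalues.
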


\subsection{On the planar case}
This subsection presents results that are valid only in the planar setting, as they rely on the fact that $\Delta \Lom u = -2\pi u$, see \cite[Lemma 4.2]{gilbarg_trudinger}, and the mean value property of the harmonic function $\log \frac{1}{|x|}$. We start by answering whether 0 is an eigenvalue of \eqref{evproblem}. In the planar case where \( w = g = 1 \), Kac observed that since \( \Delta \Lom u =-2\pi  u \), the value 0 is not an eigenvalue (see \cite[p.595]{Kac1970}). We use the same property to provide a more general sufficient condition on \( w \) to ensure that 0 is not an eigenvalue of equation \eqref{evproblem}.

\begin{theorem}\label{theorem_0_ev}
    Let $\Omega \subset \mathbb{R}^2$ be a bounded open set, and let $w,g\in\cpom$ be such that $\Delta (\log w )= C$, a constant. If $C\neq \frac{2\pi}{|\Om|}$, then 0 is not an eigenvalue of \eqref{evproblem}. 
\end{theorem}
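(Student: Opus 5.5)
Suppose $0$ is an eigenvalue of \eqref{evproblem}, so there is $u\in L^2(\Om)\setminus\{0\}$ with $\Lom_w u=0$ a.e.\ in $\Om$. Splitting the logarithm, $\log\frac{w(x)w(y)}{|x-y|}=\log w(x)+\log w(y)+\log\frac{1}{|x-y|}$, the equation becomes
\begin{equation}
\Lom u(x) + \log w(x)\int_\Om u + \int_\Om u\log w = 0\quad\text{for a.e. } x\in\Om .
\end{equation}
Write $a=\int_\Om u$ and $b=\int_\Om u\log w$; these are finite constants since $w\in\cpom$ and $\Om$ is bounded. So $\Lom u=-a\log w-b$ on $\Om$. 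The plan is to apply the Laplacian to this identity and then integrate the result over $\Om$.

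Since $\Delta\log w=C$ is assumed, $\log w$ is smooth in $\Om$; hence $\Lom u$ is smooth, and taking distributional Laplacians is legitimate. By the planar identity $\Delta\Lom u=-2\pi u$ (understood in the distributional sense for $u\in L^2$, which is the precise form of \cite[Lemma 4.2]{gilbarg_trudinger}), we get $-2\pi u=-aC$ in $\mathcal D'(\Om)$, i.e.
\begin{equation}
u=\frac{aC}{2\pi}\quad\text{a.e. in }\Om .
\end{equation}
Thus $u$ is constant. If $a=0$, this gives $u=0$, which is a contradiction. If $a\neq0$, integrating over $\Om$ gives $a=\frac{aC}{2\pi}|\Om|$, so $C=\frac{2\pi}{|\Om|}$, contradicting the hypothesis. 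Either way $0$ cannot be an eigenvalue. Here $g$ plays no role, because at $\tau=0$ the weight drops out of the equation.

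The main obstacle is justifying $\Delta\Lom u=-2\pi u$ for a general $u\in L^2(\Om)$, since \cite[Lemma 4.2]{gilbarg_trudinger} is stated for bounded, H\"older continuous densities. I would handle this distributionally: for $\varphi\in C_c^\infty(\Om)$, Fubini gives $\lb \Lom u,\Delta\varphi\rb=\lb u,\Lom(\Delta\varphi)\rb$, and the classical identity $\Lom(\Delta\varphi)=-2\pi\varphi$ (Green's representation for test functions) then yields the claim. Fubini applies because the kernel is locally integrable and $u\in L^2\subset L^1$ on the bounded set $\Om$.

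Alternatively, one could first show that an eigenfunction with $\tau=0$ is continuous, as in Theorem \ref{thm_infinite_ev}, but the distributional route avoids this step. The last point to note is that the identity $\Lom u=-a\log w-b$ holds only a.e., which is enough for distributional derivatives.
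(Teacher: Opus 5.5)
Your proof is correct and follows the paper's argument: expand the kernel, apply $\Delta$ together with $\Delta\Lom u=-2\pi u$ to force $u$ to be constant, then compare $\int_\Om u$ with $\frac{C|\Om|}{2\pi}\int_\Om u$. Your distributional proof of $\Delta\Lom u=-2\pi u$ for $u\in L^2(\Om)$, via Fubini and $\Lom(\Delta\varphi)=-2\pi\varphi$, improves on the paper, which cites \cite[Lemma 4.2]{gilbarg_trudinger}; that lemma assumes bounded H\"older continuous densities, so it is not literally a distributional $L^2$ statement. Drop the suggested alternative of obtaining continuity from Theorem \ref{thm_infinite_ev}: that regularity comes from writing $u=\Lom_w u/(\tau g)$, which is unavailable when $\tau=0$.
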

\begin{remark}
   It can be seen from the above theorem that 0 is not an eigenvalue of \eqref{evproblem} if $\log w$ is a harmonic function. In particular, Troutman’s result \cite[Corollary~1]{troutman1967} is a consequence of this theorem. 
\end{remark}
        
\begin{remark}\label{remark_w1_w2}
    For the case $\Delta \log w =\frac{2\pi}{|\Om|}$, we cannot draw any conclusions in general. For example, on a disk $B_R$, consider the weight function\footnote{An earlier version of this example was generated with the assistance of ChatGPT (OpenAI); all mathematical verification and final formulation were carried out by the authors.} $$w_1(x)=e^{\frac{|x|^2}{2R^2}+\frac{1}{2}\log R-\frac{3}{8}}.$$  We show that (see Example \ref{example_disk}), for any $c>0,$  $\Delta (\log  cw_1 )= \frac{2\pi}{|B_R|}$ and 0 is not an eigenvalue of \eqref{evproblem} for $w=cw_1,$ wih $\; c\neq 1$. However, 0 is an eigenvalue of \eqref{evproblem} for $w=w_1$. 
\end{remark}

 In \cite[p.~367]{troutman1967}, the eigenfunctions of \eqref{evproblem} for $w = g = 1$ are represented by complex analytic methods. Using the same approach, we derive the following representation for the eigenfunctions of \eqref{evproblem}.

\begin{theorem}\label{efreptheorem}
Let $w,g\in\cpom$, and let $\log w$ be a harmonic function on $\Om$. Let $\tau$ be an eigenvalue of \eqref{evproblem}, and $\phi$ be an associated eigenfunction. Then, for 
every $x \in \Om$ and for each $r > 0$ such that $B_r(x)\subset \Om$, the following representation holds:
            \begin{equation}\label{eqn:repn}
            \tau \, \phi(x) g(x) = \frac{\tau}{2\pi} \int_{\partial B_1} \phi(x + r \zeta) g(x + r \zeta) \, dS(\zeta) - \int_{ B_r(x)} \log\left( \frac{|x - y|}{r} \right) \phi(y) \, dy.
            \end{equation}
\end{theorem}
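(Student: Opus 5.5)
The plan is to write the eigenvalue equation pointwise as $\tau g\phi=\Lom_w\phi$ on $\Om$, and then compare $\Lom_w\phi(x)$ with its circular average over $\partial B_r(x)$. Expanding the logarithm of the weighted kernel gives
\begin{equation}
\Lom_w\phi(x)=\log w(x)\int_\Om\phi\,dy+\int_\Om \log w(y)\,\phi(y)\,dy+\Lom\phi(x).
\end{equation}
On the right-hand side, the first term is a harmonic function of $x$ (by the hypothesis on $w$), the second is a constant, and the third is the unweighted logarithmic potential.

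First I justify the pointwise identity. If $\tau\neq 0$, Theorem \ref{thm_infinite_ev} gives $\phi\in\mathcal C(\ol\Om)$. Since the logarithmic kernel is locally integrable and $\phi$ is bounded, $\Lom_w\phi$ is continuous, so $\tau g\phi=\Lom_w\phi$ holds everywhere in $\Om$ and not only almost everywhere. If $\tau=0$, then $\Lom_w\phi=0$ a.e. for $\phi\in L^2$. The function $\Lom_w\phi$ is still continuous, because the kernel is in $L^2_{loc}$ uniformly in $x$, so it vanishes everywhere. In that case the claimed formula reduces to $\int_{B_r(x)}\log(|x-y|/r)\phi(y)\,dy=0$, which the same computation below will produce.

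Next fix $x$ and $r$ with $B_r(x)\subset\Om$, and average over the circle. By the mean value property of the harmonic function $\log w$, the first two terms are unchanged by averaging:
\begin{equation}
\frac1{2\pi}\int_{\partial B_1}\bigl(\text{first two terms at } x+r\zeta\bigr)\,dS(\zeta)=\text{first two terms at } x.
\end{equation}
For the last term I use Fubini, which is justified since $\log|\cdot|$ is integrable on circles times $\Om$. This reduces the average of $\Lom\phi$ to the kernel identity
\begin{equation}
\frac1{2\pi}\int_{\partial B_1}\log\frac{1}{|x+r\zeta-y|}\,dS(\zeta)=\log\frac1{\max\{|x-y|,r\}}.
\end{equation}
This identity is the one real step, and I expect it to be the main obstacle. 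For $|x-y|>r$ it follows from the mean value property of the harmonic function $z\mapsto\log|z-y|$ on $\ol{B_r(x)}$. For $|x-y|<r$, write $a=y-x$; then $\log|r\zeta-a|=\log r+\log|\zeta-a/r|$, and the average of $\log|\zeta-b|$ over $|\zeta|=1$ with $|b|<1$ is $0$. To see this, note $|\zeta-b|=|1-\bar b\zeta|$ on the unit circle and $\log|1-\bar b z|$ is harmonic on the closed disk. The null set $|x-y|=r$ does not matter.

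Consequently, subtracting the average of $\Lom\phi$ from its value at $x$ gives
\begin{equation}
\Lom\phi(x)-\frac{1}{2\pi}\int_{\partial B_1}\Lom\phi(x+r\zeta)\,dS(\zeta)=\int_{B_r(x)}\Bigl(\log\frac1{|x-y|}-\log\frac1r\Bigr)\phi(y)\,dy=-\int_{B_r(x)}\log\frac{|x-y|}{r}\,\phi(y)\,dy.
\end{equation}
Combining this with the invariance of the harmonic part, and substituting $\Lom_w\phi=\tau g\phi$ at both $x$ and $x+r\zeta$, yields exactly \eqref{eqn:repn}.
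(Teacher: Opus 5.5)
Your proposal is correct and follows essentially the same route as the paper. Both arguments split the weighted kernel into the harmonic part $\log w(x)$, the constant $\int_\Om \log w\,\phi$, and the unweighted logarithmic potential. Both then average over $\partial B_r(x)$, using Fubini, the mean value property of $\log w$, and the classical identity $\frac{1}{2\pi}\int_{\partial B_1}\log|x+r\zeta-y|\,dS(\zeta)=\log\max\{|x-y|,r\}$.

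The only differences are minor. You prove that circle-average identity rather than citing it. You also handle $\tau=0$ directly, whereas the paper simply excludes it via Theorem~\ref{theorem_0_ev}, since a harmonic $\log w$ gives $\Delta\log w=0\neq 2\pi/|\Om|$.
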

From this representation, we derive a maximum principle for the eigenfunctions corresponding to negative eigenvalues. This provides geometrical insights about the eigenfunctions, as stated in Corollary~\ref{cor_efrep} and Remark \ref{rem:efrep}.

 This article is organized as follows. In Section \ref{sec_prelims}, we include the preliminary results. In Section \ref{sec_mains}, we prove the theorems stated in the introduction that hold for the general dimension $N$: Theorem \ref{thm_infinite_ev}, Theorem \ref{theorem_monotonicity_w_g}, Theorem \ref{theorem_convergence}, Theorem \ref{weightedfk}, Theorem \ref{theorem_weighted_trans}, and Theorem \ref{theorem_convergence_negative}. In Section \ref{sec_twodim}, we establish additional results for the planar case: Theorem \ref{theorem_0_ev} and Theorem \ref{efreptheorem}.
\section{Preliminaries}\label{sec_prelims}
In this section, we prove results relevant to our discussions or required for the proofs of our main theorems. Throughout this paper, for the eigenvalue problem \eqref{evproblem}, we consider weight functions $w,g \in \cpom$.  For convenience, we use $m$ and $M$ to denote the minimum and maximum of these weight functions, respectively. We start with a proposition that ensures that for any open set $V\subset \Om$, the operator $\Lom_w|_V$ is nonzero.
\begin{proposition}\label{prop:nonzero}
Let $w\in \cpom$ and for $x_1,x_2\in \Omega$ such that $x_1\neq x_2.$ Then the set $$A(x_1,x_2):=\left\{y\in \Omega\setminus\{x_1,x_2\} :\frac{w(x_1)w(y)}{|x_1-y|}=\frac{w(x_2)w(y)}{|x_2-y|}\right\}$$ does not have an interior point. In particular, for any open subset $V$ of $\Omega,$ there can exist at most one $x$ such that $\frac{w(x)w(y)}{|x-y|}=1, \forall\,y\in V.$ 
\end{proposition}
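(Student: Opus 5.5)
The plan is to cancel the common factor $w(y)$ and reduce the defining relation to a statement about two Euclidean distances. Since $w>0$, a point $y\in\Omega\setminus\{x_1,x_2\}$ lies in $A(x_1,x_2)$ if and only if
\begin{equation}
\frac{|x_2-y|}{|x_1-y|}=\frac{w(x_2)}{w(x_1)}=:\lambda ,
\end{equation}
where $\lambda>0$ is a fixed constant. So $A(x_1,x_2)$ is contained in the set $S_\lambda=\{y\in\R^N: |x_2-y|^2=\lambda^2|x_1-y|^2\}$. This is the classical Apollonius locus. If $\lambda=1$ it is the perpendicular bisector hyperplane of $x_1x_2$. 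If $\lambda\neq1$, expanding gives
\begin{equation}
(1-\lambda^2)|y|^2-2\,y\cdot(x_2-\lambda^2x_1)+|x_2|^2-\lambda^2|x_1|^2=0,
\end{equation}
which is a sphere in $\R^N$.

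In either case $S_\lambda$ is the zero set of a nonconstant polynomial. Concretely, $p(y)=|x_2-y|^2-\lambda^2|x_1-y|^2$ is nonconstant: for $\lambda\neq1$ its quadratic part is nonzero, and for $\lambda=1$ its linear part $2y\cdot(x_1-x_2)$ is nonzero because $x_1\neq x_2$. A polynomial vanishing on an open ball vanishes identically, since all its derivatives vanish at the centre. Hence $S_\lambda$, and therefore $A(x_1,x_2)$, has empty interior.

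For the second assertion, argue by contradiction. Suppose $x_1\neq x_2$ both satisfy $\frac{w(x_i)w(y)}{|x_i-y|}=1$ for all $y\in V$. Then for every $y\in V\setminus\{x_1,x_2\}$ we have $\frac{w(x_1)w(y)}{|x_1-y|}=\frac{w(x_2)w(y)}{|x_2-y|}$, so $V\setminus\{x_1,x_2\}\subset A(x_1,x_2)$. The set $V\setminus\{x_1,x_2\}$ is nonempty and open (for nonempty $V$), so it contains interior points, which contradicts the first part. Strictly, the hypothesis at $y=x_i$ is meaningless, so "for all $y\in V$" should be read as for all $y\in V$ different from $x$; removing finitely many points keeps the set open.

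The only point needing care is the degenerate case $\lambda=1$, where the locus is a hyperplane rather than a sphere. This is handled uniformly by the nonconstant-polynomial argument. No genuine obstacle is expected.
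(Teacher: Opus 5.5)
Your proof is correct and follows the paper's argument. Like the paper, you cancel $w(y)$ and place $A(x_1,x_2)$ inside an Apollonius hyperplane (when $\lambda=1$) or sphere (when $\lambda\neq1$), which has empty interior; your nonconstant-polynomial justification and your explicit derivation of the ``in particular'' clause just fill in details the paper leaves implicit.
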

\begin{proof}
    Clearly,
    \begin{equation}
       A(x_1,x_2)=\{y\in \Omega\setminus\{x_1,x_2\} :\ |x_1-y|=C|x_2-y|\},
    \end{equation}
    where $C=\dfrac{w(x_1)}{w(x_2)}$. By completing the square, we see that the set $A(x_1,x_2)$ lies in either a hyperplane (if $C=1$) or a sphere (if $C\neq1$)\cite[Theorem 3.2.14, p.45]{hitchman}; in either case, $A(x_1,x_2)$ has an empty interior. 
\end{proof}

\begin{proposition}\label{prop_sa_iff}
    Let $g\in \cpom$. Then, the operator $\frac{1}{g}\Lom_w$ is self-adjoint on $L^2(\Omega)$ if and only if $g$ is constant.
\end{proposition}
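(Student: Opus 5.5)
The "if" direction is immediate. If $g\equiv c>0$, then $\frac{1}{g}\Lom_w=\frac1c\Lom_w$. Since $\Lom_w$ is self-adjoint on $L^2(\Om)$, because its kernel $k(x,y)=\log\frac{w(x)w(y)}{|x-y|}$ is real and symmetric, this operator is self-adjoint as well.

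For the "only if" direction, suppose $\frac1g\Lom_w$ is self-adjoint. For all $u,v\in L^2(\Om)$ this gives
\begin{equation}
\iint\limits_{\Om\;\Om} k(x,y)\,u(y)v(x)\Big(\frac{1}{g(x)}-\frac{1}{g(y)}\Big)\,dy\,dx=0,
\end{equation}
where we used the symmetry of $k$ to rewrite $\lb u,\frac1g\Lom_w v\rb$. The function $F(x,y)=k(x,y)\big(\frac1{g(x)}-\frac1{g(y)}\big)$ is in $L^2(\Om\times\Om)$, because $\log|x-y|$ is locally square integrable and $w,g$ are bounded above and below. Products $u(y)v(x)$ span a dense subspace of $L^2(\Om\times\Om)$, so the identity forces $F=0$ a.e. on $\Om\times\Om$.

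Next I argue by contradiction. Assume $g$ is not constant. Since $\Om$ is a domain (connected), there are points $x_0,y_0\in\Om$ with $g(x_0)\neq g(y_0)$. By continuity we can pick small disjoint balls $B_\delta(x_0)$ and $B_\delta(y_0)$ in $\Om$ such that $\frac1{g(x)}-\frac1{g(y)}\neq0$ for every $x\in B_\delta(x_0)$ and $y\in B_\delta(y_0)$. Then $F=0$ a.e. on this product forces $k(x,y)=0$ for a.e. $(x,y)\in B_\delta(x_0)\times B_\delta(y_0)$. By Fubini, for a.e. $x\in B_\delta(x_0)$ we get $\frac{w(x)w(y)}{|x-y|}=1$ for a.e. $y\in B_\delta(y_0)$. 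Since $x\neq y$ there, this expression is continuous in $y$, so the equality holds for all $y\in B_\delta(y_0)$. This holds for infinitely many (indeed a.e.) $x$. That contradicts Proposition \ref{prop:nonzero} with $V=B_\delta(y_0)$, which allows at most one such $x$. Hence $g$ is constant.

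I expect the main obstacle to be the passage from the weak identity to the pointwise vanishing of $F$. This rests on the density of the span of tensor products in $L^2(\Om\times\Om)$ together with $F\in L^2$. If connectedness of $\Om$ is not assumed, "constant" must be read componentwise. The disjoint-balls argument above only needs two points of a single component where $g$ differs, and points in different components are handled by the same argument, since the kernel never vanishes identically on product patches.
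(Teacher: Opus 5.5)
Your proof is correct, and its skeleton matches the paper's: you pick two disjoint balls on whose product $\frac{1}{g(x)}-\frac{1}{g(y)}$ does not vanish, and reach a contradiction with Proposition~\ref{prop:nonzero}. The difference is in how you extract information from self-adjointness.

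The paper writes $\frac1g\Lom_w=G\Lom_w$, with $G$ the multiplication by $\frac1g$, and notes that self-adjointness is equivalent to $G\Lom_w=\Lom_w G$. It then uses Proposition~\ref{prop:nonzero} to find a single pair $(x_0,y_0)$ with $\frac{w(x_0)w(y_0)}{|x_0-y_0|}\neq 1$. Finally it evaluates the commutator at $x_0$ on the test function $u_0=F(x_0,\cdot)$ (your $F$ up to sign), restricted to the second ball, which gives $\int u_0^2>0$.

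You instead use the weak identity for all $u,v$ and identify $F=0$ a.e.\ through the density of tensor products in $L^2(\Om\times\Om)$. Fubini and the continuity of $k(x,\cdot)$ off the diagonal then produce a full-measure set of offending $x$'s.

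The paper's route is more hands-on (one point, one test function). However, it tacitly relies on Proposition~\ref{prop_Lom_compact}(ii), i.e.\ that $\Lom_w$ maps $L^2(\Om)$ into continuous functions, in order to evaluate an $L^2$ operator identity at the single point $x_0$. Your route is purely measure-theoretic and needs only $F\in L^2(\Om\times\Om)$.

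Your caveat about disconnected $\Om$ is unnecessary. Neither argument uses connectedness, not even to find $x_0,y_0$ with $g(x_0)\neq g(y_0)$. So both show that $g$ is constant on all of $\Om$, not merely on each component, and the ``componentwise'' reading should be dropped.
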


\begin{proof}
     If $g$ is constant, then clearly $\frac{1}{g}\Lom_w$ is self-adjoint. Conversely, assume that $\frac{1}{g}\Lom_w$ is self-adjoint. For $u\in L^2(\Om)$,  let us define $G(u)=\frac{1}{g}u.$ Since both $G$ and $\Lom_w$ are self-adjoint operators, $\frac{1}{g}\Lom_w$ is self-adjoint if and only if $G$ and $\Lom_w$ commute. Thus,  $\Lom_w\circ G=G\circ \Lom_w$. For $u\in L^2(\Om)$, we have 
    \begin{align}
        (\Lom_w\circ G)(u)(x)&=\Lom_w\left(\frac{u}{g}\right)(x)=\int_\Om \log\left(\frac{w(x)w(y)}{|x-y|}\right)\frac{u(y)}{g(y)}dy,
        \\ (G\circ \Lom_w)(u)(x)&= \frac{1}{g(x)}\Lom_wu(x)=\int_\Om \log\left(\frac{w(x)w(y)}{|x-y|}\right)\frac{u(y)}{g(x)}dy.
    \end{align}
If possible, let $x_1,y_1\in\Om$ be such that $g(x_1)\neq g(y_1)$. Then, we can find $r>0$ such that $g(x)\neq g(y)$ for every $x\in B(x_1, r)$ and $y\in B(y_1,r)$. By Proposition \ref{prop:nonzero}, there exists at most one point $x\in \Omega\setminus B(y_1,r)$ such that $\frac{w(x)w(y)}{|x - y|}\equiv 1.$ Thus there exist $x_0\in  B(x_1, r)$ and $y_0\in B(y_1,r)$ such that $\frac{w(x_0)w(y_0)}{|x_0 - y_0|}\neq 1$. 
Define  $$u_0(y)=\left(\frac{1}{g(y)}-\frac{1}{g(x_0)}\right)\log\left(\frac{w(x_0)w(y)}{|x_0 - y|}\right),\quad y\in B(y_1,r).$$ 
Thus $u_0(y_0)\neq 0,$ and by continuity $u_0\neq 0$ in a neighborhood of $y_0$. Hence
\begin{equation*}
        (\Lom_w\circ G)(u_0)(x_0)-(G\circ \Lom_w)(u_0)(x_0)=\int_{\Omega} u_0^2dy\ge \int_{B(y_1,r)} u_0^2dy> 0.
    \end{equation*} 
 This is in contradiction to the commutativity of the operators. Thus, $g$ must be constant on $\Omega.$
\end{proof}
  
The following proposition is standard; a proof can be found in many functional analysis books. For example, see Proposition 8.5.2 and Theorem 8.5.2 of \cite{Keshfunctional2022}. We will omit the proof here.
\begin{proposition}\label{prop_kesh_func}
Let $A$ be a compact self-adjoint operator on a Hilbert space $H$ such that Range(A) is infinite dimensional. Then the nonzero eigenvalues of $A$ can be enumerated as $\la_n$ (repeated according to their multiplicities)  so that
\begin{equation}
    |\la_1|\geq |\la_2|\geq \cdots |\la_n|\geq \cdots.
\end{equation}
Moreover, there exists an orthogonal set 
$\{v_n: v_n \text{ is an eigenfunction corresponding to } \la_n, n\in \N\}$, and each $\la_n$ admits the following variational characterizations:
\begin{equation}
    \la_n= \frac{\langle A v_n,\, v_n \rangle}{\|v_n\|^2 }=
    \max_{\substack{v \perp  V_{n-1} \\ v \neq 0}}
    \frac{\langle A v,\, v \rangle}{\|v\|^2 },\quad \text{if } \la_n>0, 
    \end{equation}
    \begin{equation}
    \la_n=  \frac{\langle A v_n,\, v_n \rangle}{\|v_n\|^2 }=
    \min_{\substack{v \perp V_{n-1} \\ v \neq 0}}
    \frac{\langle A v,\, v \rangle}{\|v\|^2 }, \quad \text{if } \la_n<0, 
    \end{equation}
    where $V_0=\{0\}, V_n = {\rm Span}\{v_1,v_2,\ldots v_n\}$.
    Moreover, any extremizer of the above variational problems is an eigenfunction corresponding to $\la_n.$
\end{proposition}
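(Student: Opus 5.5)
The plan is to derive everything from the spectral theorem for compact self-adjoint operators on a Hilbert space. That theorem gives three facts. Every nonzero element of $\sigma(A)$ is an eigenvalue of finite multiplicity. The nonzero eigenvalues form an at most countable set whose only possible accumulation point is $0$. And $H=\ker A\oplus\overline{\mathrm{span}}\{\text{eigenvectors with nonzero eigenvalue}\}$.

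\textbf{Step 1: enumeration.} Since $\overline{\mathrm{Range}(A)}=(\ker A)^\perp$ and $\mathrm{Range}(A)$ is infinite dimensional, there are infinitely many nonzero eigenvalues counted with multiplicity. Because each $\{|\lambda|\ge\varepsilon\}$ contains only finitely many of them (multiplicity included), they can be listed as $\lambda_1,\lambda_2,\dots$ with $|\lambda_1|\ge|\lambda_2|\ge\cdots$. Inside each (finite-dimensional) eigenspace we pick an orthonormal basis. Distinct eigenspaces of a self-adjoint operator are orthogonal, so the resulting $\{v_n\}$ is an orthonormal system with $Av_n=\lambda_nv_n$, and $H=\ker A\oplus\overline{\mathrm{span}}\{v_n\}$.

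\textbf{Step 2: variational formula.} Take $v\perp V_{n-1}$ and expand it as
\[
v=v_0+\sum_{k\ge n}c_kv_k, \qquad v_0\in\ker A .
\]
Then
\[
\langle Av,v\rangle=\sum_{k\ge n}\lambda_kc_k^2, \qquad \|v\|^2=\|v_0\|^2+\sum_{k\ge n}c_k^2 .
\]
Suppose $\lambda_n>0$. Every $k\ge n$ satisfies $\lambda_k\le|\lambda_k|\le|\lambda_n|=\lambda_n$, so $\langle Av,v\rangle\le\lambda_n\sum_{k\ge n} c_k^2\le\lambda_n\|v\|^2$. Equality is attained at $v=v_n$, which lies in $V_{n-1}^\perp$, so the supremum is a maximum equal to $\lambda_n=\langle Av_n,v_n\rangle/\|v_n\|^2$. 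The case $\lambda_n<0$ is identical with inequalities reversed, using $\lambda_k\ge-|\lambda_k|\ge-|\lambda_n|=\lambda_n$.

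\textbf{Step 3: extremizers.} This is the only step needing care. Suppose $v$ attains the maximum. Both inequalities in Step 2 must then be equalities. The second forces $v_0=0$, since $\lambda_n>0$ and $\ker A$ contributes to the norm but not to the numerator. The first forces $c_k=0$ whenever $\lambda_k<\lambda_n$. Hence $v$ lies in the span of those $v_k$ with $k\ge n$ and $\lambda_k=\lambda_n$, so $Av=\lambda_nv$. The negative case is symmetric.

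The main subtlety is bookkeeping with multiplicities. The index $n$ may fall in the middle of a repeated eigenvalue, which is why we argue with $|\lambda_k|\le|\lambda_n|$ for $k\ge n$ rather than with strict inequalities. We must also account for $\ker A$, which may be nontrivial.
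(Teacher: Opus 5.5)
Your argument is correct. The expansion of $v\perp V_{n-1}$ as $v_0+\sum_{k\ge n}c_kv_k$ with $v_0\in\ker A$ is the key step. The bound $\lambda_k\le|\lambda_k|\le|\lambda_n|$ for $k\ge n$ handles two things at once: a repeated eigenvalue split across the index $n$, and eigenvalues of the opposite sign appearing later in the list. The equality analysis then forces $v_0=0$ and $c_k=0$ whenever $\lambda_k\neq\lambda_n$. Together these deliver every assertion, including the one about extremizers. In a complex Hilbert space you would write $|c_k|^2$ instead of $c_k^2$; nothing else changes.

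The paper gives no proof and defers to standard textbook results. The usual textbook route runs in the opposite direction. One first shows that $\sup_{\|v\|=1}|\langle Av,v\rangle|=\|A\|$ is attained, and that any maximizer is an eigenvector. One then repeats this extremization on successive orthogonal complements. In that route the orthonormal eigenvectors and the variational characterization are produced together, and the spectral decomposition comes out as a consequence.

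You instead assume the spectral theorem and read the characterization off the eigen-expansion. This is shorter and makes the extremizer claim almost immediate. The cost is that you assume, rather than prove, the structure theorem that the constructive route establishes along the way.
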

Next, we recall the definition of the weighted transfinite diameter from \cite{ebsaff1997}.
\subsection{The weighted transfinite diameter}\label{subsec_weighted_trans}
Let $E$ be a compact set in $\R^N$ and $W\in \mathcal{C}^+(E)$ be a weight function. For each $n\geq 2$, let 
\begin{equation}\label{rho_n}
    \rho_n(E,W)= \max_{(x_1, x_2, \cdots x_n) \in E^n} \,\,  \left [\underset{1\leq i<j\leq n}{\prod} |x_i-x_j|W(x_i)W(x_j) \right]^{\frac{2}{n(n-1)}},
\end{equation}   
be the $n^{th}$-weighted diameter of $E$. Since $W$ is continuous on the compact set $E$, the maximum in \eqref{rho_n} is attained for some points $x_1, x_2, \dots, x_n \in E$. These points are called as the weighted Fekete points of $E$ associated with $W$. The sequence $\rho_n(E,W)$ is monotonically decreasing (see \cite[Theorem 1.1(p.143)]{ebsaff1997} for $n=2$). \emph{The weighted transfinite diameter of $E$}, denoted by $T_{diam}(E,W)$, is defined as 
\begin{equation}\label{limit_n_diameter}
    T_{diam}(E,W)=\lim_{n\rightarrow\infty}\rho_n(E,W).
\end{equation}  
For a non-empty bounded open set $\Omega\subset\R^N$, we define the weighted transfinite diameter as 
\begin{equation}
    T_{diam}(\Om,W):=T_{diam} (\overline{\Om},W).
\end{equation}

\begin{remark}\label{remark_weighted_trans}
We list some remarks related to the weighted transfinite diameter.
    \begin{enumerate}[(i)]
         \item For $W = 1$ and $N=2$, $T_{diam}(E,W)=T_{diam}(E)$, the classical transfinite diameter of $E$.

         \item For $c>0$,  $T_{diam}(E,cW)=c^2\,T_{diam}(E,W)$.

         \item For disk $B_R\subset\R^2 $,  $T_{diam}(B_R)=R$, while for weight  $W(x)=e^{-\left(\frac{|x|^2}{2R^2}+\frac{1}{2}\log R-\frac{3}{8}\right)}$, we have $T_{diam}(B_R,W)=1$ (see Proposition \ref{prop_ball_computation}). 
    \end{enumerate}
\end{remark}
\begin{remark}\label{alternate_definition}
    An alternative definition of the weighted transfinite diameter is given in terms of weighted logarithmic capacity(see \cite[p.27, p.63 $\&$ Theorem 1.3(p.145)]
        {ebsaff1997}). For a Borel measure $\nu$ on $E$, define
        \begin{equation}
            I_E(\nu,W):=\iint\limits_{E\;E} \log\left(\frac{1}{|x-y|W(x)W(y)}\right)d\nu(x)d\nu(y).
        \end{equation}
        The weighted Robin constant is given by
        \begin{equation}\label{robinconst}
           V(E,W)=\underset{\nu}{\inf} \, I_E(\nu,W),
        \end{equation}
        where $\nu$ varies over all normalized Borel measures on $E$. The infimum $V(E,W)$ is attained by a measure $\nu$ \cite[Theorem~3.1(b)]{saff1992paper}, known as the equilibrium measure. The weighted transfinite diameter is defined by  
        \begin{equation}\label{weightedlogarithmiccapacity}
            T_{diam}(E,W)=e^{- V(E,W)},
        \end{equation}
       which is also referred to as the weighted logarithmic capacity.
\end{remark}
For $N=2$, the convergence of the transfinite diameter for a monotone sequence of compact sets is proved in \cite[Theorem 5.1.3, p.~128]{Ransford1995}. In \cite[Theorem 3.3(d)]{saff1992paper}, this result is extended for the weighted transfinite diameter for the decreasing sequences of compact sets. We adapt the arguments from \cite{Ransford1995} and establish the monotone convergence for the weighted transfinite diameter for an increasing sequence of compact sets.
\begin{proposition}\label{thm:weighted_diam_limit}
    Let $E$ and $E_n$ (for $n\in \N$) be compact sets in $\R^N$ such that $  E_n \subseteq E_{n+1},\; \forall\,n\in\N$ and $E = \bigcup_{n=1}^\infty E_n.$  Let $W \in \mathcal{C}^+(E)$ be a weight function. Then 
    \begin{equation}
       \lim_{n \to \infty} T_{\mathrm{diam}}(E_n, W)= T_{\mathrm{diam}}(E,W).
    \end{equation}
\end{proposition}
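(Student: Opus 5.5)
The plan is to work directly with the Fekete-point definition \eqref{rho_n} and \eqref{limit_n_diameter}, following the argument of \cite[Theorem 5.1.3]{Ransford1995}. Since $E_n\subseteq E_{n+1}\subseteq E$, the maximum in \eqref{rho_n} is taken over larger sets. Hence $\rho_k(E_n,W)\le\rho_k(E_{n+1},W)\le\rho_k(E,W)$ for every $k\ge 2$. Passing to the limit in $k$ shows that $T_{diam}(E_n,W)$ is nondecreasing in $n$ and bounded above by $T_{diam}(E,W)$. So $L:=\lim_n T_{diam}(E_n,W)$ exists and $L\le T_{diam}(E,W)$. It remains to prove the reverse inequality $L\ge T_{diam}(E,W)$.

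For the reverse inequality I will use that each $\rho_k$ is a finite-dimensional maximization, and approximate Fekete points of $E$ by points of $E_n$. Fix $k\ge2$ and let $(x_1,\dots,x_k)\in E^k$ be weighted Fekete points for $E$. The function
\begin{equation}
F(z_1,\dots,z_k)=\prod_{1\le i<j\le k}|z_i-z_j|W(z_i)W(z_j)
\end{equation}
is continuous on $E^k$. Since $\bigcup_n E_n=E$ and the union is increasing, each $x_i$ lies in $E_{n_i}$ for some $n_i$. Taking $n\ge\max_i n_i$, all the $x_i$ lie in $E_n$. This gives $\rho_k(E_n,W)=\rho_k(E,W)$ for $n$ large, so no approximation is even needed here.

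Then, using that $\rho_k(E_n,W)$ is decreasing in $k$, we have $T_{diam}(E_n,W)\le\rho_k(E_n,W)$ for each $k$. This bound goes in the wrong direction, so the real issue is the interchange of limits in $n$ and $k$. This is the main obstacle. The trick in \cite{Ransford1995} uses energies rather than Fekete points, so I will switch to the characterization in Remark \ref{alternate_definition}: $T_{diam}(E,W)=e^{-V(E,W)}$. It suffices to show $V(E_n,W)\to V(E,W)$. Monotonicity gives $V(E_n,W)\ge V(E,W)$.

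Let $\nu$ be the equilibrium measure of $E$ (assuming $V(E,W)<\infty$; otherwise the claim is trivial). Since $\nu(E_n)\uparrow\nu(E)=1$, define $\nu_n=\nu|_{E_n}/\nu(E_n)$. Then
\begin{equation}
I_{E_n}(\nu_n,W)=\frac{1}{\nu(E_n)^2}\iint_{E_n\times E_n}\log\frac{1}{|x-y|W(x)W(y)}\,d\nu(x)\,d\nu(y).
\end{equation}
The kernel is bounded below on $E\times E$, because $E$ is bounded and $W\le\max_E W$. Adding a constant makes it nonnegative, so the integral over $E_n\times E_n$ increases to the integral over $E\times E$ by monotone convergence. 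The added constant is harmless because $\nu(E_n)^2\to1$. Hence $\limsup_n V(E_n,W)\le\lim_n I_{E_n}(\nu_n,W)=I_E(\nu,W)=V(E,W)$, which gives the desired convergence. Two details need checking. First, $\nu(E_n)>0$ for large $n$, which holds since $\nu(E_n)\to1$. Second, the restriction of $W$ to $E_n$ is a legitimate weight in $\mathcal{C}^+(E_n)$, which is immediate.
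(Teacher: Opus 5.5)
Your proof is correct and, once you drop the Fekete-point paragraph (which is accurate but does not feed into the conclusion), it is essentially the paper's proof. Like the paper, you restrict the equilibrium measure $\nu$ of $E$ to $E_n$, normalize it, and pass to the limit in the weighted energy to get $T_{\mathrm{diam}}(E,W)\le\lim_n T_{\mathrm{diam}}(E_n,W)$. Your justification of that limit is slightly more careful than the paper's bare appeal to dominated convergence: you shift the kernel by a constant and apply monotone convergence, and you treat the case $V(E,W)=\infty$ separately.
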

\begin{proof}
    By the domain monotonicity of the transfinite diameter,
    \begin{equation}
        T_{\mathrm{diam}}(E_n,W) \leq T_{\mathrm{diam}}(E_{n+1},W)  \leq T_{\mathrm{diam}}(E,W),\quad\forall\, n\in\N,
    \end{equation}
    and hence
    \begin{equation}\label{trans_domain_monotonicity}
        \lim_{n \to \infty} T_{\mathrm{diam}}(E_n, W)\leq T_{\mathrm{diam}}(E,W).
    \end{equation}
    For the opposite inequality, we use the alternate definition of transfinite diameter(see Remark \ref{alternate_definition}). Let $\nu$ be the equilibrium measure of $E$ such that $$T_{\mathrm{diam}}(E,W)=e^{-I_E(\nu,W)}.$$
    Define $\mu_n = \frac{\nu|_{E_n}}{\nu(E_n)}$,
    which is a normalized Borel measure supported on $E_n$ and also on $E$. By the definition of the weighted transfinite diameter  \eqref{weightedlogarithmiccapacity}, it follows that
    \begin{equation}\label{eqn_trans_potential}
        e^{-I_E(\mu_n,W)}\leq T_{diam}(E_n,W).
    \end{equation}  
    Since $\nu(E_n) \to \nu(E)= 1$,  we use the dominated convergence theorem to obtain
    \begin{align}
        I_E(\mu_n,W) &=\frac{1}{\nu(E_n)^2}\iint\limits_{E\;E} \log\left(\frac{1}{|x-y|W(x)W(y)}\right)\chi_{E_n}(x)\chi_{E_n}(y) d\nu(x)d\nu(y)\to I_E(\nu,W),
    \end{align}    
    Now, from \eqref{eqn_trans_potential}, we obtain
    \begin{equation}
        T_{\mathrm{diam}}(E,W) = e^{-I_E(\nu,W)}= \lim_{n \to \infty}
           e^{-I_E(\mu_n,W)}\leq \lim_{n \to \infty}T_{\mathrm{diam}}(E_n,W).
    \end{equation}
    This completes the proof.   
\end{proof}

\section{Proof of the main theorems}\label{sec_mains}
In this section, we prove the results that hold for any dimension $N\geq 2$. We prove some propositions required for proving the main theorems. We then prove Theorem \ref{thm_infinite_ev}, Theorem~\ref{theorem_monotonicity_w_g}, Theorem \ref{theorem_convergence}, Theorem~\ref{weightedfk}, Theorem \ref{theorem_weighted_trans} and Theorem \ref{theorem_convergence_negative}. To prove Theorem \ref{thm_infinite_ev}, we consider a more general operator $\widetilde{\Lom}_{w,h}$ on $ L^{2}(\Omega)$ defined by
\begin{equation*} \widetilde{\Lom}_{w,h} u(x)=\int_\Om h(x,y)\log\left(\frac{w(x)w(y)}{|x-y|}\right)u(y)dy, \quad u\in L^2(\Om),\end{equation*}
where $w \in C^{+}(\overline{\Omega})$ and 
$h \in C^{+}(\overline{\Omega}\times\overline{\Omega})$ is symmetric, i.e., 
$h(x,y) = h(y,x)$.
In the next proposition, we establish some properties of this operator by adapting Troutman’s argument from \cite[p. 366]{troutman1967}.
\begin{proposition}\label{prop_Lom_compact}
    Let $\Om$ be a bounded domain in $\R^N$ and $w\in\cpom$. Then
    \begin{enumerate}[(i)]
        \item $\widetilde{\Lom}_{w,h} : L^2(\Omega) \to   L^2(\Omega)$ is a bounded linear operator.
        \item\label{prop_2} $\widetilde{\Lom}_{w,h} \, u\in C(\overline{\Om})$.
        \item\label{prop_3} $\widetilde{\Lom}_{w,h} $ is compact and self-adjoint.
    \end{enumerate}
\end{proposition}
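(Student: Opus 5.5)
The plan is to split the kernel and treat each piece separately. Write
\[
K(x,y):=h(x,y)\log\Big(\frac{w(x)w(y)}{|x-y|}\Big)=h(x,y)\big(\log w(x)+\log w(y)\big)+h(x,y)\log\frac{1}{|x-y|}.
\]
The first piece, $K_1(x,y)=h(x,y)(\log w(x)+\log w(y))$, is continuous on $\ol{\Om}\times\ol{\Om}$, because $w\in\cpom$ is bounded between $m>0$ and $M$. It is therefore bounded and uniformly continuous. The second piece, $K_2(x,y)=h(x,y)\log\frac{1}{|x-y|}$, is bounded by $\|h\|_\infty|\log|x-y||$, and this is integrable uniformly in $x$. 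Indeed, with $d=\mathrm{diam}(\Om)$,
\[
\sup_{x\in\ol{\Om}}\int_\Om |K_2(x,y)|^p\,dy\le \|h\|_\infty^p\int_{B_d}|\log|z||^p\,dz<\infty \quad\text{for every } p<\infty,
\]
since $|\log|z||^p$ is integrable near $0$ in any dimension $N\ge2$.

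For (i), I would use the Schur test. Both $\sup_x\int_\Om|K(x,y)|dy$ and $\sup_y\int_\Om|K(x,y)|dx$ are finite by the bounds above and the symmetry of $K$. The Schur test then gives boundedness on $L^2(\Om)$. Alternatively, $K\in L^2(\Om\times\Om)$, and Hilbert--Schmidt gives boundedness directly.

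For (ii), fix $u\in L^2$ and $x_0\in\ol{\Om}$. By Cauchy--Schwarz,
\[
|\widetilde{\Lom}_{w,h}u(x)-\widetilde{\Lom}_{w,h}u(x_0)|\le \|u\|_2\Big(\int_\Om|K(x,y)-K(x_0,y)|^2dy\Big)^{1/2},
\]
so it suffices to show $K(x,\cdot)\to K(x_0,\cdot)$ in $L^2(\Om)$ as $x\to x_0$. The $K_1$ part converges uniformly. For the $K_2$ part we have pointwise convergence for $y\neq x_0$, since $h$ is continuous. The $L^4$ bound above gives uniform integrability of $|K_2(x,\cdot)|^2$. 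Vitali's convergence theorem then yields the $L^2$ convergence.

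The step that needs the most care is this uniform integrability. Concretely, for any measurable $S\subset\Om$,
\[
\int_S|K_2(x,y)|^2dy\le |S|^{1/2}\Big(\int_{B_d}\|h\|_\infty^4|\log|z||^4dz\Big)^{1/2},
\]
which is small uniformly in $x$ once $|S|$ is small. So the argument should go through.

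For (iii), $K$ is in $L^2(\Om\times\Om)$, because $\iint|K|^2\le |\Om|\sup_x\int|K(x,y)|^2dy<\infty$. The operator is therefore Hilbert--Schmidt and hence compact. Self-adjointness follows from $K(x,y)=K(y,x)$, which holds because $h$ is symmetric and the logarithmic factor is symmetric. Fubini, justified by $K\in L^2(\Om\times\Om)$ and $u,v\in L^2$, then gives $\lb\widetilde{\Lom}_{w,h}u,v\rb=\lb u,\widetilde{\Lom}_{w,h}v\rb$.
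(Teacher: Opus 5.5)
Your proof is correct, but it reaches (ii) and (iii) by a different route than the paper. For (i), the paper uses the pointwise Cauchy--Schwarz bound $|\widetilde{\Lom}_{w,h}u(x)|^2\le K_{w,h,\Om}\|u\|^2$, which is essentially your Hilbert--Schmidt alternative. For (ii), the paper derives an explicit modulus of continuity. It splits $\Om$ according to whether $\frac{|x_2-y|\,w(x_1)}{|x_1-y|\,w(x_2)}\ge 1$ or not, and uses $\log t\le C_\alpha (t-1)^\alpha$ for $t\ge 1$, to show
\[
|\widetilde{\Lom}_{w,h}u(x_1)-\widetilde{\Lom}_{w,h}u(x_2)|^2\le C\|u\|^2\bigl(|w(x_1)-w(x_2)|^\alpha+|x_1-x_2|^\alpha\bigr)^2+C'\|u\|^2\Bigl(\int_\Om|h(x_1,y)-h(x_2,y)|^4\,dy\Bigr)^{1/2}.
\]
It then gets compactness in (iii) from this estimate together with the uniform bound, via Arzel\`a--Ascoli. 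Hilbert--Schmidt appears only as an aside in Remark~\ref{remark_continuity}. You instead obtain continuity qualitatively: pointwise convergence off $x_0$, uniform integrability from the $L^4$ bound, then Vitali. Compactness then comes from the Hilbert--Schmidt property, so the two statements are decoupled and you avoid the case split and the elementary logarithmic inequality. The paper's route yields a single quantitative estimate that gives continuity and equicontinuity on bounded sets. Hence it gives compactness even as a map $L^2(\Om)\to C(\ol{\Om})$, and H\"older regularity of $\widetilde{\Lom}_{w,h}u$ when $w$ and $h$ are H\"older. Your route loses only the explicit rate. Since $x\mapsto K(x,\cdot)$ is continuous from the compact set $\ol{\Om}$ into $L^2(\Om)$, it is uniformly continuous. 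Your Cauchy--Schwarz bound then also gives equicontinuity on bounded subsets of $L^2(\Om)$, so the Arzel\`a--Ascoli argument would be available to you as well.
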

\begin{proof}
$(i)$ For $u\in L^2(\Om)$, we first show that $\widetilde{\Lom}_{w,h} u\in L^2(\Om)$:
\begin{align}
    |\widetilde{\Lom}_{w,h} u(x)|^2&=\left| \int_\Om h(x,y)\log \left(\frac{w(x)w(y)}{|x-y|}\right)u(y)dy\right|^2
    \leq\|h\|_\infty^2\|u\|^2  \int_\Om \left| \log \left(\frac{w(x)w(y)}{|x-y|}\right)\right|^2 dy\nonumber\\
    &\leq 2\|h\|_\infty^2 \|u\|^2 \left[\int_\Om |\log (w(x)w(y))|^2dy+\int_\Om \left| \log \frac{1}{|x-y|}\right|^2dy\right].
\end{align}
 Since $w\in\cpom$, the function $\log(w(x)w(y))$ is bounded on $\Omega$, so the first integral is finite. For  $R>\mathrm{diam}(\Om)$, we have $\Om\subseteq B_R(x),\,\,\forall\, x\in \Om$. Thus,
\begin{equation*}
    \int_\Om \left| \log \frac{1}{|x-y|}\right|^2 dy\leq \int_{B_R(x)} \left| \log \frac{1}{|x-y|}\right|^2 dy = |\mathbb{S}^{N-1}| \int_0^R r^{N-1}(\log \, r)^2 dr<+\infty.
\end{equation*}
Therefore,
\begin{equation}\label{uniform_bound}
    |\widetilde{\Lom}_{w,h} u(x)|^2\leq K_{w,h,\Om}  \|u\|^2,
\end{equation}
for some constant $K_{w,h,\Om}$ that depends only on $w,h$ and $\Om$. Since $\Om$ is bounded, $\widetilde{\Lom}_{w,h} u\in L^2(\Om)$, and the operator $\widetilde{\Lom}_{w,h}$ is bounded on $L^2(\Om)$. 
\hfill\\
\\
$(ii)$ Let $x_1,x_2\in\Om$. Then
\begin{align}\label{ineq1}
    |\widetilde{\Lom}_{w,h} &u(x_1)-\widetilde{\Lom}_{w,h} u(x_2)|^2  
    \leq 2 \|u\|^2 \|h\|_\infty^2\int_\Om \left| \log \left(\frac{|x_2-y|}{|x_1-y|}\times\frac{w(x_1)}{w(x_2)}\right)\right|^2 dy\nonumber \\
    & +2 \|u\|^2\left(\int_\Om |h(x_1,y)-h(x_2,y)|^4dy\right)^{\frac{1}{2}} \left(\int_\Om \left| \log \left(\frac{w(x_2)w(y)}{|x_2-y|}\right)\right|^4 dy\right)^{\frac{1}{2}}.
\end{align}
To estimate the first term, define
\begin{equation}
    F(z_1,z_2,y):= \frac{|z_2-y|}{|z_1-y|}\times\frac{w(z_1)}{w(z_2)},\text{ for } z_1,z_2,y\in\Om,
\end{equation}
and 
\begin{align}
    \Om_1=\{y\in\Om :F(x_1,x_2,y)\geq 1 \} \text{ and }\Om_2=\{y\in\Om :F(x_2,x_1,y)> 1 \}.
\end{align}
Observe that
\begin{enumerate}[(a)]
    \item $F(z_1,z_2,y)=\frac{1}{F(z_2,z_1,y)}$ and  $y\in\Om_1$ or $y\in\Om_2$,
    \item $|\log(F(x_1,x_2,y))|=|\log(F(x_2,x_1,y))|,\quad \forall\, y\in\Om$.
\end{enumerate} 
To estimate the integrand in \eqref{ineq1}, we use the following inequality: 
\begin{equation}\label{logestimate}
    \log t \leq C_\alpha(t-1)^\alpha, \quad \forall\, t \geq 1, \text{ and } \alpha \in \left(0,1\right),
\end{equation}
which follows from the fact that the function $\frac{\log t}{(t-1)^\alpha}$ vanishes at 1 and $\infty$. For $y\in \Om_1$, we have
    \begin{align}
        F(x_1,x_2,y)- 1&=\frac{|x_2-y|w(x_1)-|x_1-y|w(x_2)}{|x_1-y|w(x_2)}\\
            &=\frac{|x_2-y|(w(x_1)-w(x_2))+w(x_2)( |x_2-y|-|x_1-y|)}{|x_1-y|w(x_2)}\\
            &\leq \frac{\mathrm{diam}(\Om)|w(x_1)-w(x_2)|+M |x_1-x_2|}{m|x_1-y|}\leq C\frac{|w(x_1)-w(x_2)|+ |x_1-x_2|}{|x_1-y|}.
    \end{align}
Now using  \eqref{logestimate}, and  the inequality $(|a| + |b|)^\alpha \leq 2^\alpha(|a|^\alpha + |b|^\alpha)$, we get
\begin{align*}
     \left| \log \left(\frac{|x_2-y|}{|x_1-y|}\times\frac{w(x_1)}{w(x_2)}\right)\right|&\leq 
       C_\alpha' \frac{(|w(x_1)-w(x_2)|^\alpha + |x_1-x_2|^\alpha)}{|x_1-y|^\alpha}.
\end{align*}
Similarly, for $y\in\Om_2$
\begin{align*}
     \left| \log \left(\frac{|x_2-y|}{|x_1-y|}\times\frac{w(x_1)}{w(x_2)}\right)\right|
     &\leq C_\alpha' \frac{(|w(x_1)-w(x_2)|^\alpha + |x_1-x_2|^\alpha)}{|x_2-y|^\alpha}.
\end{align*}
 Upon integrating over $\Om_i$, for $i=1,2$, we obtain
\begin{align}\label{p1}
    \int_{\Om_i} \left| \log \left(\frac{|x_2-y|}{|x_1-y|}\times\frac{w(x_1)}{w(x_2)}\right)\right|^2 dy&\leq  (C_\alpha')^2(|w(x_1)-w(x_2)|^\alpha + |x_1-x_2|^\alpha)^2 \int_{B(x_i,R)} \frac{1}{|x_i-y|^{2\alpha}}dy\nonumber\\
    & \leq  (C_{\alpha}')^2|\mathbb{S}^{N-1}| (|w(x_1)-w(x_2)|^\alpha + |x_1-x_2|^\alpha)^2 \int_{0}^R \frac{1}{r^{2\alpha-N+1}}dr.
\end{align}
The second term in \eqref{ineq1} can be estimated by observing that
\begin{align}\label{p2}
    \int_\Om \left| \log \left(\frac{w(x_2)w(y)}{|x_2-y|}\right)\right|^4 dy&\leq 2^4\left[ \int_\Om |\log (w(x_2)w(y))|^4dy + \int_{B_R(x_2)} \left| \log \left(\frac{1}{|x_2-y|}\right)\right|^4 dy\right]\nonumber\\
    &\leq 2^4\left[ C_\Om + |\mathbb{S}^{N-1}| \int_0^R r^{N-1}(\log \, r)^4 dr\right]=C_\Om ',
\end{align}
where $C_\Om $ and $C_\Om '$ are constants depending on $\Om$.  Substituting \eqref{p1} and \eqref{p2} in \eqref{ineq1}, we conclude that 
\begin{align}\label{uniformcontinuity}
    |\widetilde{\Lom}_{w,h} u(x_1)-\widetilde{\Lom}_{w,h} u(x_2)|^2&\leq C_\alpha'' \|u\|^2 (|w(x_1)-w(x_2)|^\alpha +  |x_1-x_2|^\alpha)^2\nonumber\\
    &\quad\quad+2 \sqrt{C_\Om '} \|u\|^2\left(\int_\Om |h(x_1,y)-h(x_2,y)|^4dy\right)^{\frac{1}{2}}.
\end{align}
Since $w$ is uniformly continuous on $\Om$ and $h$ is uniformly continuous on $\Om\times\Om$, it follows that $\widetilde{\Lom}_{w,h} \, u\in C(\overline{\Om})$.
\hfill\\
\\
\noindent $(iii)$ Let $(u_n)$ be a  bounded sequence in $ L^2(\Om)$. Then from \eqref{uniform_bound} and \eqref{uniformcontinuity}, we conclude that  $(\widetilde{\Lom}_{w,h} (u_n))$ is uniformly bounded and equicontinuous. By the Arzela–Ascoli theorem, $(\widetilde{\Lom}_{w,h} (u_n))$ has a  subsequence that converges in $C(\overline{\Om})$, and hence also converges in  $L^2(\Om)$. Therefore, the operator $\widetilde{\Lom}_{w,h} $ is  compact  on $L^2(\Om)$. Clearly,
\begin{equation}\label{eqn_sa}
    \lb \widetilde{\Lom}_{w,h} \, u,v\rb = \iint\limits_{\Om\;\Om}h(x,y) \log \left(\frac{w(x)w(y)}{|x-y|}\right) u(x)v(y)dxdy= \lb \widetilde{\Lom}_{w,h} \,v,u\rb.
\end{equation}
This implies, $\widetilde{\Lom}_{w,h} $ is self-adjoint.
\end{proof}

\begin{remark}\label{remark_continuity}
The kernel $K(x,y)$ of $\widetilde{\mathcal L}_{w,h}$ is
\[
K(x,y)=h(x,y)\,\log\!\left(\frac{w(x)w(y)}{|x-y|}\right),
\]
and it is square integrable as seen below:
\[
\iint\limits_{\Om\;\Om} |K(x,y)|^{2}\,dx\,dy
\le \|h\|_{\infty}
\iint\limits_{\Om\;\Om}
\bigl(|\log(w(x)w(y))| + |\log|x-y||\bigr)^{2} dx\,dy
<+\infty.
\]
Therefore, the operator $\widetilde{\mathcal L}_{w,h}$ can be expressed as a limit of finite rank operators, and hence compact.  However, we established compactness using the Arzelà–Ascoli theorem. The intermediate step \eqref{uniformcontinuity} in our proof implies 
$\widetilde{\mathcal L}_{w,h} \, u\in C(\overline{\Omega})$, for any $u\in L^2(\Om)$. This ensures that the eigenfunctions of $\widetilde{\mathcal L}_{w,h} $ corresponding to non-zero eigenvalues are continuous. 
\end{remark}
Consider the following special case of the above operator, $\Lom_{w,g}$ on $L^2(\Om)$ defined by
    \begin{equation}\label{operator_L_wg}
        \Lom_{w,g}v(x)=\int_\Om \frac{1}{\sqrt{g(x)g(y)}}\log\left(\frac{w(x)w(y)}{|x-y|}\right)v(y)dy,
    \end{equation}
    and the eigenvalue problem:
    \begin{equation}\label{newevproblem}
        \Lom_{w,g}v=\tau v.
    \end{equation}

\noindent \textbf{Proof of Theorem \ref{thm_infinite_ev}:} For $u\in L^2(\Omega)$, set $\tilde{u}=\sqrt{g}u.$ Then, for any $f\in L^2(\Omega)$, observe the following.
$$\inpr{\Lom_w u,f}=\tau \inpr{g u,f} \iff \inpr{\frac{1}{\sqrt{g}}\Lom_w\left(\frac{\tilde{u}}{\sqrt{g}}\right),f}=\tau\inpr{\sqrt{g}u,f} \iff \inpr{\Lom_{w,g}\tilde{u},f}=\tau\inpr{\tilde{u},f}. $$
This shows that  $(\tau,u)$ is an eigenpair of \eqref{evproblem} if and only if $(\tau,\sqrt{g}u)$ is an eigenpair of \eqref{newevproblem}. Moreover, 
\begin{equation}\label{eqn_equiv_innerprod_ortho}
\left.
    \begin{aligned}
\langle \mathcal{L}_{w,g} \tilde{u},\, \tilde{u} \rangle
= \langle \mathcal{L}_{w,g}&(\sqrt{g}u),\, \sqrt{g}u \rangle
= \langle \mathcal{L}_{w} u,\, u \rangle, \\
\int_\Omega \tilde{u}^2 \,dx = \int_\Omega g u^2 \,dx,
\quad &\text{and}\quad
\int_\Omega \tilde{u} \tilde{v} \,dx = \int_\Omega g u v \,dx.
\end{aligned}\quad \right\}
\end{equation}
Taking $h(x,y)=\frac{1}{\sqrt{g(x)g(y)}}$ in Proposition \ref{prop_Lom_compact}, we see that $\Lom_{w,g}$ is compact and self-adjoint. In addition, we have
\begin{equation}
    \Lom_{w,g}v=\left\lb v, \frac{1}{\sqrt{g}}\right\rb\log w +\left\lb v, \frac{\log w}{\sqrt{g}}\right\rb+\Lom v.
\end{equation}
Since $\rm{R}an(\Lom)$ is infinite-dimensional, the above relation shows that $\rm{R}an(\Lom_{w,g})$ is also infinite-dimensional.  Let $(\tau_n)$ be the sequence of all non-zero eigenvalues of $\Lom_{w,g}$ and $\{v_n:n\in \N\}$ be the orthogonal set of eigenfunctions of $\Lom_{w,g}$   given by  Proposition \ref{prop_kesh_func}. Then, for each $n\in \N,$ $u_n=\frac{v_n}{\sqrt{g}}$ is an eigenfunctions of \eqref{evproblem} corresponding to $\tau_n.$ Define $$U_0=L^2(\Omega)\qquad U_n = \left\{u\in L^2(\Omega):\int_\Omega g u u_i=0, \forall\, i=1,2\ldots, n \right\}.$$
Let $\tau_n>0$ be an eigenvalue of $\Lom_{w,g}$. Then by  Proposition \ref{prop_kesh_func} we have 
\begin{equation}
    \tau_n=\frac{\langle \Lom_{w,g} v_n,\, v_n \rangle}{\int_{\Omega} v_n^2 }=\max_{\substack{v \perp  V_{n-1} \\ v \neq 0}}
        \frac{\langle \Lom_{w,g} v,\, v \rangle}{\int_{\Omega} v^2 },
\end{equation}
Now  by \eqref{eqn_equiv_innerprod_ortho}, one can deduce that  
\begin{equation}
    \tau_n=\frac{\langle \Lom_{w} u_n,\, u_n \rangle}{\int_{\Omega} gu_n^2 }=\max_{\substack{ u \in  U_{n-1} \\ u \neq 0}}
        \frac{\langle \Lom_{w} u,\, u \rangle}{\int_{\Omega} gu^2 }.
\end{equation}
An analog variational characterization holds for negative eigenvalues. This completes the proof. \qed

For the operator $\Lom$, the existence of a positive eigenfunction corresponding to $\tau_{1,1}^+(\Om)$ was shown in \cite[Theorem 1.4]{anoopjiya2025} under restriction $\mathrm{diam}(\Omega) \leq 1$. Next, we show a similar result for the eigenfunction corresponding to $\tau_{w,g}^+(\Om)$. The proof follows on the same line as the proof of \cite[Theorem 1.4]{anoopjiya2025}. For completeness, we provide the proof here.
\begin{proposition}\label{prop-non-neg-w}
    Let $\Omega \subset \mathbb{R}^N$ be a bounded open set, and let $w,g\in\cpom$ be such that  $ \sqrt{\mathrm{diam}(\Om)}\leq w$.
 Then, any eigenfunction associated with $\tau_{w,g}^+(\Omega)$ does not vanish in $\Om$.
\end{proposition}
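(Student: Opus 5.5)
The condition $\sqrt{\mathrm{diam}(\Om)}\leq w$ makes the kernel nonnegative: for $x,y\in\Om$,
\[
\frac{w(x)w(y)}{|x-y|}\geq \frac{\mathrm{diam}(\Om)}{|x-y|}\geq 1,
\]
so $K(x,y)=\log\frac{w(x)w(y)}{|x-y|}\ge 0$. Moreover, $K>0$ off a null set: equality in both inequalities would force $|x-y|=\mathrm{diam}(\Om)$, which for $x,y$ in the open set $\Om$ happens only on a set of measure zero in $\Om\times\Om$. (In fact $|x-y|<\mathrm{diam}(\Om)$ for interior points, since $\Om$ is open, so $K>0$ everywhere on $\Om\times\Om$ off the diagonal.) The plan is then the standard Perron--Frobenius/Jentzsch-type argument, run through the variational characterization \eqref{weightedcharmu1}.

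\textbf{Step 1: $|u|$ is also an eigenfunction.} Let $u$ be an eigenfunction for $\tau_{w,g}^+(\Om)$, normalized so that $\int_\Om gu^2=1$. Since $K\ge0$,
\[
\lb \Lom_w u,u\rb=\iint K(x,y)u(x)u(y)\,dx\,dy\le \iint K(x,y)|u(x)||u(y)|\,dx\,dy=\lb\Lom_w|u|,|u|\rb .
\]
Also $|u|\in\mathcal M_g$. By maximality, $|u|$ is a maximizer of \eqref{weightedcharmu1}. By the final assertion of Proposition \ref{prop_kesh_func}, transported via the correspondence in the proof of Theorem \ref{thm_infinite_ev} (the map $u\mapsto\sqrt g u$), $|u|$ is therefore an eigenfunction for $\tau^+:=\tau_{w,g}^+(\Om)>0$. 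Note $\tau^+>0$ because $\lb\Lom_w 1,1\rb>0$ when $K>0$.

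\textbf{Step 2: $|u|>0$ everywhere.} By Remark \ref{remark_continuity}, $|u|=\frac{1}{\tau^+ g}\Lom_w|u|$ is continuous. Since $|u|\not\equiv 0$, the set where $|u|>0$ has positive measure. For every $x\in\Om$,
\[
\tau^+ g(x)|u(x)|=\int_\Om K(x,y)|u(y)|\,dy>0,
\]
because $K(x,\cdot)>0$ a.e. Hence $|u|>0$ on $\Om$.

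\textbf{Step 3: $u$ has a constant sign.} From Step 1, equality holds:
\[
\iint K(x,y)\bigl(|u(x)||u(y)|-u(x)u(y)\bigr)\,dx\,dy=0 .
\]
The integrand is nonnegative and $K>0$ a.e., so $u(x)u(y)=|u(x)||u(y)|$ for a.e. $(x,y)$. Thus $u$ has a constant sign a.e. Since $u$ is continuous and $|u|>0$, we get $u=\pm|u|$, so $u$ does not vanish in $\Om$.

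\textbf{Main obstacle.} The key point is verifying strict positivity of $K$ a.e. This needs care only in the boundary case where $w=\sqrt{\mathrm{diam}(\Om)}$ at points $x,y$ with $|x-y|$ close to $\mathrm{diam}(\Om)$. Openness of $\Om$ gives $|x-y|<\mathrm{diam}(\Om)$ strictly, which settles it. If $\Om$ is disconnected, nothing extra is needed, because the kernel couples all components positively.
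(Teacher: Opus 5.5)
Your proof is correct and follows essentially the same route as the paper. Both arguments compare $\langle \mathcal{L}_w u,u\rangle$ with $\langle \mathcal{L}_w |u|,|u|\rangle$ using strict positivity of the kernel, and then evaluate the continuous eigen-equation at a point. The difference is only in organization: the paper argues by contradiction (a sign change would force $\langle \mathcal{L}_w|\phi|,|\phi|\rangle>\tau_{w,g}^+(\Omega)$) and then applies the equation to $\phi\ge 0$ at a putative zero, whereas you use the equality case to make $|u|$ an eigenfunction first and derive constant sign afterwards.
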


\begin{proof}
     Let $\phi\in\mathcal{M}_g$ be an eigenfunction corresponding to $\tau_{w,g}^+(\Omega)$. Let
    \begin{equation*}
        \Omega^+=\{x\in\Omega: \phi(x)>0\},\;\text{  and  }\;
        \Omega^-=\{x\in\Omega: \phi(x)<0\}.
    \end{equation*}
    If $\phi$ changes sign, then 
    \begin{equation}\label{measnonzero}
        |\Omega^+|\neq 0 \text{ and }|\Omega^-|\neq 0,
    \end{equation}
    and
    \begin{equation}\label{signchange}
        |\phi(x)\phi(y)|>\phi(x)\phi(y),\quad \text{for all } x\in\Omega^+\, ,\, y\in\Omega^-.
    \end{equation}
    From assumption $ \sqrt{\mathrm{diam}(\Om)}\leq w$, we obtain 
    $|x-y|<\mathrm{diam}(\Om)\leq w(x)w(y)\text{ for } x,y\in\Om$ and hence
    \begin{equation}\label{log is positive}
        \log \left(\frac{w(x)w(y)}{|x-y|}\right) > 0,\quad \forall\, x,y\in\Om.
    \end{equation}
    Therefore, from the above inequalities we get
    \begin{align*}
        \lb \Lom_w (|\phi|),|\phi| \rb &=\iint\limits_{\Om^+\;\Om^+}\log\left(\frac{w(x)w(y)}{|x-y|}\right) \phi(x)\phi(y)dxdy + 2\iint\limits_{\Om^+\;\Om^-}\log\left(\frac{w(x)w(y)}{|x-y|}\right) |\phi(x) \phi(y)|dxdy\\
        &\quad\quad\quad\quad + \iint\limits_{\Om^-\;\Om^-}\log\left(\frac{w(x)w(y)}{|x-y|}\right) \phi(x)\phi(y)dxdy\\
        &> \iint\limits_{\Om^+\;\Om^+}\log\left(\frac{w(x)w(y)}{|x-y|}\right) \phi(x)\phi(y)dxdy + 2 \iint\limits_{\Om^+\;\Om^-}\log\left(\frac{w(x)w(y)}{|x-y|}\right) \phi(x)\phi(y)dxdy\\
        &\quad\quad\quad\quad +\iint\limits_{\Om^-\;\Om^-}\log\left(\frac{w(x)w(y)}{|x-y|}\right)\phi(x)\phi(y)dxdy= \lb \Lom_w \phi,\phi \rb=\tau_{w,g}^+(\Om).
    \end{align*}
A contradiction as $\tau_{w,g}^+(\Om)$ as the largest eigenvalue. Thus, $\phi$ cannot change its sign in $\Omega$, and hence $\phi$ can be chosen to be non-negative.   Now suppose that $\phi(x_0)=0$ for some $x_0\in\Om$. Then
    \begin{equation}
        0=\tau_{w,g}^+(\Omega) g(x_0)\, \phi(x_0)=\int_\Omega \log\left(\frac{w(x_0)w(y)}{|x_0-y|}\right) \phi(y)dy.
    \end{equation}
    Thus, from \eqref{log is positive}, we must have $\phi = 0$ a.e. in $\Omega$, a contradiction, as $\phi$ is an eigenfunction. From Remark \ref{remark_continuity}, $\phi$ is continuous, and therefore, $\phi$ does not vanish in $\Omega$.
\end{proof}
\noindent \textbf{Proof of Theorem \ref{theorem_monotonicity_w_g}:} 
\newline $(i)$
        Since $ \sqrt{\mathrm{diam}(\Omega)}\leq w$, by Proposition \ref{prop-non-neg-w}, we get a positive eigenfunction $\phi$ corresponding to $\tau_{w,g}^+(\Omega)$. Now $w \leq w_1$ yields
    \begin{equation}\label{e1}
        \lb \Lom_w \phi,\phi \rb\leq \lb \Lom_{w_1} \phi,\phi \rb.
    \end{equation}
    Since $0<g_1 \leq g$, it follows that 
    \begin{equation}\label{e2}
        \int_\Omega g_1 \phi^{\,2} \, dx\leq \int_\Omega g \phi^{\,2} \, dx.
    \end{equation}
    By the definition of $\tau_{w_1,g_1}^+(\Omega)$ 
    together with \eqref{e1} and \eqref{e2}, we obtain
    \begin{align*}
        \tau_{w,g}^+(\Om)\leq \tau_{w_1,g_1}^+(\Om).
    \end{align*}    
     Furthermore, if $w\neq w_1$, then there exists a subset $\mathcal{N} \subseteq \Omega$ of positive measure such that $w<w_1$ on $\mathcal{N}$. Thus,
    \begin{equation*}
        \frac{w(x)w(y)}{|x-y|}< \frac{w_1(x)w_1(y)}{|x-y|},\quad\text{on }\,\mathcal{N}\times \mathcal{N},
    \end{equation*}
    hence $ \lb \Lom_w \phi,\phi \rb< \lb \Lom_{w_1} \phi,\phi \rb.$  Therefore, 
    \begin{align}\label{e3}
        \begin{split}
        \tau_{w,g}^+(\Om)\leq \tau_{w_1,g_1}^+(\Om).
        \end{split}
        \end{align}
    If $g_1 \neq g$, then by a similar reasoning as before, we deduce that
    \begin{equation*}
        \tau_{w,g}^+(\Om)<\tau_{w_1,g_1}^+(\Om).
    \end{equation*}
    \hfill\\
\noindent $(ii)$    
     Let $\phi\in\mathcal{M}_g$ be an eigenfunction corresponding to $\tau_{w,g}^+(\Om_1)$ and $\widetilde{\phi}$ be its zero extension to $\Om$. Then
   \begin{align*}
       \tau_{w,g}^+(\Om_1)
       &=\lb \Lom_w \widetilde{\phi},\widetilde{\phi} \rb \leq \tau_{w,g}^+(\Om).
   \end{align*}
    In addition, if $\Om_1\subsetneq \Om$, then by Proposition \ref{prop-non-neg-w}, $\widetilde{\phi}$ cannot be an eigenfunction corresponding to $\tau_{w,g}^+(\Om)$. By Proposition \ref{prop_kesh_func}, any extremizer of \eqref{weightedcharmu1} is an eigenfunction, and we obtain
   \begin{align*}
       \lb \Lom_w \widetilde{\phi},\widetilde{\phi} \rb<\tau_{w,g}^+(\Om).
   \end{align*}
    This completes the proof.  \qed
\begin{remark}
    Let $\tau_n$ be an eigenvalue, and let $\Omega_1 \subseteq \Omega$. If $\tau_n>0$, then
    \begin{equation}
        \tau_n(\Om_1)\leq \tau_n(\Om),
    \end{equation}
    whereas if $\tau_n<0$, the monotonicity is reversed,
    \begin{equation}
        \tau_n(\Om_1)\geq \tau_n(\Om).
    \end{equation}
    The proof follows the same argument as in part \ref{item:dom:mon} of the preceding theorem, extending the corresponding eigenfunction by zero to the larger domain $\Omega$.
\end{remark}

In subsequent theorems, we use the notation $\N_0$ to denote $\N \cup \{0\}$.

\noindent \textbf{Proof of Theorem \ref{theorem_convergence}:} For each $n\in\N_0$, consider the potential operator $\mathcal{L}_n$ on $L^2(B_R)$, 
\begin{equation}
    \mathcal{L}_n u(x):=\int_{B_R} l_n(x,y) u(y)dy,
\end{equation}
where
     \begin{equation*}
         l_n(x,y)= \chi_{\Om_n}(x)\chi_{\Om_n}(y)\log \left(\frac{w_n(x)w_n(y)}{|x-y|}\right),\quad x,y\in B_R.
     \end{equation*}
    Observe that $\Lom_n u$ is nothing but the zero extension of $\Lom_{w_n}u$ from $\Omega_n$ to $B_R$. Let $\phi_n$ be an eigenfunction corresponding to $\tau_{w_n,g_n}^+(\Omega_n)$ such that $\int_{\Om_n}g_n\phi_n^2=1$  and $\tilde{\phi}_n$ be its zero extension to $B_R$. It can be seen that
     \begin{align}\label{eqn_ext_ball}
         \tau_{w_n,g_n}^+(\Omega_n)&= \lb \Lom_{w_n} \phi_n,\phi_n \rb=\lb\mathcal{L}_n \tilde{\phi}_n,\tilde{\phi}_n \rb,\quad \forall\, n\in\N_0.
     \end{align}
     Furthermore, for $v\in L^2(B_R)$ with $v|_{\Om_n}\neq 0$, we consider a test function  $u=v|_{\Om_n}\left(\int_{\Om_n}gv^2\right)^{-\frac{1}{2}}$ in $L^2(\Om_n)$ to deduce that
     \begin{align}\label{tau_def_in_ball}
         \tau_{w_n,g_n}^+(\Omega_n)
         &=\max\left\{ \lb \mathcal{L}_n v,v \rb_{L^2(B_R)}: v\in L^2(B_R) \text{ and } \int_{B_R}g_nv^2= 1 \right\}.
     \end{align}
       Therefore, for $v=\tilde{\phi}_k\left(\int_{B_R}g_j(\tilde{\phi}_{k})^2 \right)^{-\frac{1}{2}}$ in $L^2(B_R),$ we get
       \begin{equation}\label{Lmn_combo}
           \lb \mathcal{L}_j \tilde{\phi}_k,\tilde{\phi}_k \rb\leq \tau_{w_j,g_j}^+(\Omega_n)\int_{B_R}g_j 
         (\tilde{\phi}_k)^2,\quad\forall\, j,k\in\N_0.
       \end{equation}
     Hence, from \eqref{eqn_ext_ball} and \eqref{Lmn_combo} (with  $j=0$ and $k=n$), we obtain
     \begin{align}\label{new_e1}
         \lb(\mathcal{L}_n-\mathcal{L}_0)\tilde{\phi}_n,\tilde{\phi}_n \rb&\geq \tau_{w_n,g_n}^+(\Omega_n)- \tau_{w_0,g_0}^+(\Omega_0)\int_{B_R}g_0 
         (\tilde{\phi}_n)^2.
     \end{align}
     Similarly, from \eqref{eqn_ext_ball} (with $n=0$), and \eqref{Lmn_combo} (with $j=n$ and $k=0$), we get
     \begin{align}\label{new_e2}
         \lb(\mathcal{L}_n-\mathcal{L}_0)\tilde{\phi}_0,\tilde{\phi}_0 \rb&\leq \tau_{w_n,g_n}^+(\Omega_n)\int_{B_R}g_n 
         (\tilde{\phi}_0)^2- \tau_{w_0,g_0}^+(\Omega_0).
     \end{align}
      Since $g_n\to g_0$ uniformly, for sufficiently small  $\epsilon>0$, there exists  $n_0\in\N$  such that
      \begin{equation}\label{g_n_bounds}
          0<g_0-\epsilon\leq g_n\leq g_0+\epsilon,\quad \forall \, n\geq n_0,
      \end{equation}
     hence, there exists $m>0$ such that
      $$m< g_n,\forall\, n\in \N_0.$$ Consequently, since $\int_{B_R} g_n(\tilde{\phi}_n)^2=1$,  we obtain
      \begin{equation}
          \int_{B_R} (\tilde{\phi}_n)^2< \frac{1}{m}, \forall\, n\in \N_0. 
      \end{equation}
      Thus, by \eqref{g_n_bounds}, for every $n\geq n_0$, we get
      \begin{align}
         \int_{B_R}g_0 (\tilde{\phi}_n)^2 &\leq \int_{B_R}g_n (\tilde{\phi}_n)^2+\epsilon \int_{B_R} (\tilde{\phi}_n)^2< 1+\frac{\epsilon}{m}, \\
          \int_{B_R}g_n (\tilde{\phi}_0)^2 & \leq \int_{B_R}g_0 (\tilde{\phi}_0)^2+\epsilon \int_{B_R} (\tilde{\phi}_0)^2< 1+\frac{\epsilon}{m}.
      \end{align}
      Therefore, from \eqref{new_e1} and \eqref{new_e2}, $\forall \, n\geq n_0,$ we get 
      \begin{align}\label{combinedinequality}
         \lb(\mathcal{L}_n-\mathcal{L}_0)\tilde{\phi}_0,\tilde{\phi}_0 \rb-\tau_{w_n,g_n}^+(\Omega_n)\frac{\epsilon}{m}
         &\leq \tau_{w_n,g_n}^+(\Omega_n)-\tau_{w_0,g_0}^+(\Omega_0)\nonumber\\
        &\leq \lb(\mathcal{L}_n-\mathcal{L}_0)\tilde{\phi}_n,\tilde{\phi}_n \rb+\tau_{w_0,g_0}^+(\Omega_0)\frac{\epsilon}{m},
     \end{align}      
     Since $w_n\to w_0$ uniformly, there exists a $C>0$ such that
     \begin{equation}\label{Lnbound}
         |l_n(x,y)|\leq C+\left|\log\frac{1}{|x-y|}\right|, \quad\forall\, n\in\N_0.
     \end{equation}
     Consequently, we deduce that
     \begin{align}
         \tau_{w_n,g_n}^+(\Omega_n)&=\lb\mathcal{L}_n \tilde{\phi}_n,\tilde{\phi}_n \rb\leq \frac{1}{m} \left(\iint\limits_{B_R\;B_R} |l_n(x,y)|^2dxdy\right)^{\frac{1}{2}}\\
         &\leq \frac{2}{m}  \left(\iint\limits_{B_R\;B_R} \left[C^2+\left|\log\frac{1}{|x-y|}\right|^2\right]dxdy\right)^{\frac{1}{2}}=:C',\;\forall \, n\in \N_0.
     \end{align}  
    Thus, from \eqref{combinedinequality}, we obtain
     \begin{align*}
         |\tau_{w_n,g_n}^+(\Omega_n)-\tau_{w_0,g_0}^+(\Omega_0)|
         &\leq \frac{1}{m} \left(\iint\limits_{B_R\;B_R} |l_n(x,y)-l_0(x,y)|^2dxdy\right)^{\frac{1}{2}}+\frac{ C'}{m}\epsilon,\,\;\forall \, n\geq n_0.
     \end{align*}     
     From the convergence of $(\chi_{\Om_n})$ and $(w_n)$,  it follows that $l_n(x,y) \to l_0(x,y)$ pointwise. Now, \eqref{Lnbound} enables us to use the dominated convergence theorem and conclude 
     \begin{equation*}
         \iint\limits_{B_R\;B_R} |l_n(x,y)-l_0(x,y)|^2dxdy \to 0.
     \end{equation*}    
     Hence, $\tau_{w_n,g_n}^+(\Omega_n)\to \tau_{w_0,g_0}^+(\Om_0)$ as required. \qed 

\noindent \textbf{Proof of Theorem \ref{theorem_convergence_negative}:} The proof follows exactly the same steps as in the proof of Theorem \ref{theorem_convergence}. Let $\psi_n$ be an eigenfunction corresponding to $\tau_{w_n,g_n}^-(\Omega_n)$ such that $\int_{\Om_n}g_n\psi_n^2=1$  and $\tilde{\psi}_n$ be its zero extension to $B_R$. As before, we have
     \begin{align}\label{eqn_ext_ball_neg}
         \tau_{w_n,g_n}^-(\Omega_n)&= \lb \Lom_{w_n} \psi_n,\psi_n \rb_{L^2(\Om_n)}=\lb\mathcal{L}_n \tilde{\psi}_n,\tilde{\psi}_n \rb_{L^2(B_R)},\quad \forall\, n\in\N_0,
     \end{align}
     and
     \begin{align}\label{tau_def_in_ball_neg}
         \tau_{w_n,g_n}^-(\Omega_n)  
         &=\inf\left\{ \lb \mathcal{L}_n v,v \rb_{L^2(B_R)}: v\in L^2(B_R) \text{ and } \int_{B_R}g_nv^2= 1 \right\}.
     \end{align}
       Hence, for $v = \tilde{\psi}_k \left(\int_{B_R} g_j (\tilde{\psi}_k)^2 \right)^{-1/2}$ in $L^2(B_R)$, we obtain
       \begin{equation}\label{Lmn_combo_neg}
           \lb \mathcal{L}_j \tilde{\psi}_k,\tilde{\psi}_k \rb\geq \tau_{w_j,g_j}^-(\Omega_j)\int_{B_R}g_j 
         (\tilde{\psi}_k)^2,\quad\forall\, j,k\in\N_0.
       \end{equation}  
      Proceeding analogously to \eqref{combinedinequality}, for $ n\geq n_0$, we obtain
      \begin{align}\label{combinedinequality_neg}
         \lb(\mathcal{L}_n-\mathcal{L}_0)\tilde{\psi}_n,\tilde{\psi}_n \rb_{L^2(B_R)}+\tau_{w_0,g_0}^-(\Omega_0)\frac{\epsilon}{m}         
         &\leq \tau_{w_n,g_n}^-(\Omega_n)-\tau_{w_0,g_0}^-(\Omega_0)\nonumber\\
        &\leq\lb(\mathcal{L}_n-\mathcal{L}_0)\tilde{\psi}_0,\tilde{\psi}_0 \rb_{L^2(B_R)}-\tau_{w_n,g_n}^-(\Omega_n)\frac{\epsilon}{m}.
     \end{align}  
    Repetition of the estimates established after \eqref{combinedinequality} in the preceding theorem leads to the conclusion that
 $\tau_{w_n,g_n}^-(\Omega_n)\to \tau_{w_0,g_0}^-(\Om_0)$.  \qed     

 To establish the reverse Faber-Krahn inequality for $\tau_{w,g}^+$, we require a Riesz-type inequality for functions in the following function space: 
\begin{equation*}
    \mathcal{F}_w=\left\{f:f \text{ is measurable on }\R^N \text{ and } \iint\limits_{\R^N\;\R^N} \left|\log \left(\frac{w(x)w(y)}{|x-y|}\right)\right|\big| f(x)f(y) \big| dxdy<\infty\right\}.
\end{equation*}
The case $w=1$ was proved in \cite[Proposition~2.6]{anoopjiya2025}. In the following proposition, we consider a more general $w$. The proof follows the same approach outlined in \cite[Proposition 2.6]{anoopjiya2025}, with some slight modifications. For completeness, we provide a proof here. 
\begin{proposition}\label{riezpol}
    Let $H$ be a polarizer, and let $f$ be a measurable function on $\mathbb{R}^N$, such that $f,P_H(f)\in\mathcal{F}_w$. Assume that $w(x)=w(\sigma_H(x))$, for all $x\in\R^N$. Then
    \begin{equation}\label{riez_type_inequality_pol}
        \iint\limits_{\R^N\;\R^N} \log \left(\frac{w(x)w(y)}{|x-y|}\right) f(x)f(y) dxdy \leq  \iint\limits_{\R^N\;\R^N} \log \left(\frac{w(x)w(y)}{|x-y|}\right) P_H(f)(x)P_H(f)(y) dxdy .
    \end{equation}
    In addition, the equality holds in \eqref{riez_type_inequality_pol} only if either $ P_H(f)=f$ a.e. or $P_H(f)=f\circ \sigma_H $ a.e.
\end{proposition}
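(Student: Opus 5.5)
The plan is the standard two-point rearrangement argument. Write $\sigma=\sigma_H$, $\bar x=\sigma(x)$, and $K(x,y)=\log\left(\frac{w(x)w(y)}{|x-y|}\right)$. Split $\R^N\times\R^N$ into the four pieces $H\times H$, $H\times H^c$, $H^c\times H$, $H^c\times H^c$ (the hyperplane $\partial H$ has measure zero). Then use the change of variables $x\mapsto\bar x$ on $H^c$ to fold both double integrals in \eqref{riez_type_inequality_pol} onto $H\times H$. Since $w\circ\sigma=w$ and $\sigma$ is an isometry, $K(\bar x,\bar y)=K(x,y)$ and $K(x,\bar y)=K(\bar x,y)$. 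Hence each side becomes
\[
\iint\limits_{H\;H}\Big( K(x,y)\big[f(x)f(y)+f(\bar x)f(\bar y)\big]+K(x,\bar y)\big[f(x)f(\bar y)+f(\bar x)f(y)\big]\Big)dxdy,
\]
and the same expression holds with $f$ replaced by $P_H(f)$. The assumption $f,P_H(f)\in\mathcal{F}_w$ makes all these integrals absolutely convergent, so the rearrangement is legitimate.

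Next I compare the integrands pointwise for fixed $x,y\in H$. Put $a=f(x)$, $b=f(\bar x)$, $c=f(y)$, $d=f(\bar y)$. Polarization replaces $(a,b)$ by $(\max,\min)$ and $(c,d)$ by $(\max,\min)$. In both cases the sums $ac+bd+ad+bc=(a+b)(c+d)$ are preserved, so the integrand equals
\[
\big(K(x,y)-K(x,\bar y)\big)(ac+bd)+K(x,\bar y)(a+b)(c+d).
\]
Here $K(x,y)-K(x,\bar y)=\log\frac{|x-\bar y|}{|x-y|}$, where the weights cancel by symmetry of $w$. This is strictly positive for $x,y\in H$, since $|x-\bar y|>|x-y|$ for points on the same side of $\partial H$. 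The elementary inequality $ac+bd\le \max(a,b)\max(c,d)+\min(a,b)\min(c,d)$ (rearrangement) then gives the pointwise inequality, and integrating yields \eqref{riez_type_inequality_pol}.

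For the equality case, equality forces $\log\frac{|x-\bar y|}{|x-y|}\,\big[\text{(polarized }ac+bd)-(ac+bd)\big]=0$ a.e. on $H\times H$. Since the logarithm is positive, the bracket vanishes a.e. The bracket equals $(a-b)^+(d-c)^++(b-a)^+(c-d)^+$ up to sign conventions; more precisely, it is positive exactly when $(a-b)(c-d)<0$. So for a.e. $(x,y)\in H\times H$ the differences $f(x)-f(\bar x)$ and $f(y)-f(\bar y)$ do not have strictly opposite signs. Let $A=\{x\in H: f(x)>f(\bar x)\}$ and $B=\{x\in H: f(x)<f(\bar x)\}$. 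Then $|A\times B|=0$, so either $|A|=0$ or $|B|=0$. If $|B|=0$ then $P_H(f)=f$ a.e. If $|A|=0$ then $P_H(f)=f\circ\sigma_H$ a.e. (checking on $H$ and on $H^c$ via the reflection).

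I expect the main obstacle to be the bookkeeping needed to justify splitting and rearranging the integrals, since $K$ changes sign and the integrals are only conditionally meaningful. This is handled by working with $|K||f(x)f(y)|$, which is finite by the $\mathcal{F}_w$ hypotheses for both $f$ and $P_H(f)$, so Fubini and the changes of variables apply termwise.
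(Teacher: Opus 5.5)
Your proposal is correct and follows essentially the paper's proof. You fold both integrals onto $H\times H$ using $w\circ\sigma_H=w$, compare the integrands pointwise using $|x-y|<|x-\bar y|$ for $x,y\in H$, and get the equality case from $|A\times B|=0$. Your single identity, based on the invariance of $(a+b)(c+d)$ under polarization, simply packages into one pointwise inequality the paper's case split over $A\times A$, $B\times B$ and $A\times B$ (where the paper computes the difference $(f(x)-f(\bar x))(f(\bar y)-f(y))(K(x,y)-K(x,\bar y))\ge 0$).
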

\proof 
     Define 
    \begin{equation*}
        I(f)= \iint\limits_{\R^N\;\R^N} \log \left(\frac{w(x)w(y)}{|x-y|}\right) f(x)f(y) dxdy .
     \end{equation*}
 Denote $\overline{x}=\sigma_H(x)$ and $K(x,y)=\log \left(\frac{w(x)w(y)}{|x-y|}\right)$. Since $w(x)=w(\ol{x})$, by applying a change of variable, we get:
 \begin{equation}
     I(f)=\iint\limits_{H\;H} \big[ K(x,y)(f(x)f(y)+f(\overline{x})f(\overline{y}))+K(x,\overline{y})(f(x)f(\overline{y})+f(\overline{x})f(y)) \big] dxdy. \\
 \end{equation}
 Let 
 \begin{equation*}
     \mathcal{S}_{f}(x,y):=K(x,y)(f(x)f(y)+f(\overline{x})f(\overline{y}))+K(x,\overline{y})(f(x)f(\overline{y})+f(\overline{x})f(y)).
 \end{equation*}
 Then
  \begin{equation}\label{I_f}
     I(f)= \iint\limits_{A\;A} \mathcal{S}_{f}(x,y) dxdy + 2 \iint\limits_{B\;A} \mathcal{S}_{f}(x,y) dxdy +\iint\limits_{B\;B} \mathcal{S}_{f}(x,y) dxdy,
 \end{equation}
 where  
 \begin{equation*}
     A=\{x\in H: f(x)\geq f(\overline{x})\}\text{ and }B=\{x\in H: f(x)< f(\overline{x})\} .
 \end{equation*}
 Notice that
 \begin{equation}\label{x_in A_x_in B def}
\begin{aligned}
  P_H(f)(x)=f(x) &\quad\text{and}\quad P_H(f)(\overline{x})=f(\overline{x}),\quad\forall\, x\in A,\\
     P_H(f)(x)=f(\overline{x}) &\quad\text{and}\quad  P_H(f)(\overline{x})=f(x),\quad\forall\, x\in B.
\end{aligned}
\end{equation}
It is easy to verify that $$\mathcal{S}_{f}(x,y)=\mathcal{S}_{P_H(f)}(x,y), \forall\, (x,y)\in \left(A\times A\right)\, \bigcup\, \left(B\times B\right).$$ Consequently,
\begin{equation}\label{A_A}
    \iint\limits_{A\;A} \mathcal{S}_{f}(x,y) dxdy= \iint\limits_{A\;A} \mathcal{S}_{P_H(f)}(x,y) dxdy,
\end{equation}
and
\begin{equation}\label{B_B}
    \iint\limits_{B\;B} \mathcal{S}_{f}(x,y) dxdy= \iint\limits_{B\;B} \mathcal{S}_{P_H(f)}(x,y) dxdy.
\end{equation}
On the other hand, for $(x,y)\in A\times B$,       
    \begin{equation}\label{S_PHf-S_f}
        \mathcal{S}_{P_H(f)}(x,y)-\mathcal{S}_{f}(x,y)=(f(x)-f(\overline{x}))(f(\overline{y})-f(y))(K(x,y)-K(x,\overline{y})).
    \end{equation}
     Since  $|x-y|< |x-\overline{y}|$(\cite[Proposition 2.5]{anoopjiya2025}) and $w(y)=w(\ol{y})$, it follows that $K(x,y)>K(x,\overline{y})$ and hence
    \begin{equation*}
        \mathcal{S}_{P_H(f)}(x,y)-\mathcal{S}_{f}(x,y)\geq 0.
    \end{equation*}
    Therefore,
    \begin{equation}\label{B_A}
    \iint\limits_{B\;A} \mathcal{S}_{f}(x,y) dxdy\leq \iint\limits_{B\;A} \mathcal{S}_{P_H(f)}(x,y) dxdy.
\end{equation}
Now, by combining \eqref{A_A}, \eqref{B_B}and \eqref{B_A}, we conclude  $I(f)\leq I(P_H(f))$. 

If  $I(f)=I(P_H(f))$, then
\begin{equation*}
    \iint\limits_{B\;A} \mathcal{S}_{f}(x,y)dxdy=\iint\limits_{B\;A} \mathcal{S}_{P_H(f)}(x,y)dxdy.
\end{equation*}
Therefore, $$\mathcal{S}_{P_H(f)}(x,y)=\mathcal{S}_{f}(x,y) \text{ a.e on } A\times B.$$ Let $A_1:=\{x\in A: f(x)>f(\overline{x})\}.$ 
Since $K(x,y)>K(x,\overline{y})$,  from \eqref{S_PHf-S_f} we conclude that $\mathcal{S}_{P_H(f)}(x,y)>\mathcal{S}_{f}(x,y)$, for $(x,y)\in A_1\times B$. Therefore, $|A_1\times B|=0$, and therefore $|A_1|=0$ or $|B|=0$. Notice that
\begin{enumerate}[(i)]
    \item if $|B|=0$, then $f(x)\geq f(\overline{x})$  a.e. in $H$, and hence $P_H(f)=f$ a.e. in $\R^N,$
     \item if $|A_1|=0$, then $f(x)\leq f(\overline{x})$ a.e. in $H$, and hence $P_H(f)=f\circ\sigma_H$  a.e. in $\R^N$.  
\end{enumerate}
 This concludes the proof. \qed
 
 \noindent {\bf Proof of Theorem \ref{weightedfk}:} Let $\phi_1$ be the eigenfunction corresponding to $\tau_{w,g}^+(\Om)$, satisfying $\int_\Om g \phi_1^2=1$ and $\phi_1>0$ on $\Om\setminus\sigma_H(\Om)$. We also denote the zero extension of  $\phi_1$ to $\R^N$ by $\phi_1$. Consider the sets
\begin{equation*}
     A=\{x\in H: \phi_1(x)\geq \phi_1(\overline{x})\}\text{ and }B=\{x\in H: \phi_1(x)< \phi_1(\overline{x})\} .
 \end{equation*}
 By the definition of polarization, we have:
 \begin{equation}\label{x_in A_x_in B}
\begin{aligned}
  P_H(\phi_1)(x)=\phi_1(x) &\quad\text{and}\quad P_H(\phi_1)(\overline{x})=\phi_1(\overline{x}),\quad\forall\, x\in A,\\
     P_H(\phi_1)(x)=\phi_1(\overline{x}) &\quad\text{and}\quad  P_H(\phi_1)(\overline{x})=\phi_1(x),\quad\forall\, x\in B.
\end{aligned}
\end{equation}
In both cases, it follows that
\begin{equation*}
    P_H (\phi_1)(x)^2+P_H (\phi_1)(\ol{x})^2=\phi_1(x)^2+\phi_1(\ol{x})^2.
\end{equation*}
Therefore,
\begin{align*}
        \int_{\mathbb{R}^2} g(x) P_H(\phi_1)(x)^2 \, dx 
        &= \int_H g(x) P_H(\phi_1)(x)^2 \, dx + \int_{H^c} g(x) P_H(\phi_1)(x)^2 \, dx \\
        &= \int_H g(x) \left[ P_H(\phi_1)(x)^2 + P_H(\phi_1)(\overline{x})^2 \right] dx \quad \text{(since $g(x) = g(\overline{x})$)} \\
        &= \int_H g(x) \left[ \phi_1(x)^2 + \phi_1(\overline{x})^2 \right] dx \\
        &= \int_{\mathbb{R}^2} g(x) \phi_1(x)^2 \, dx \quad \text{(again using symmetry of $g$)}.
\end{align*}
Since $\phi_1 > 0$ in $\Omega\setminus\sigma_H(\Omega)$ and is extended by zero outside, it follows that $P_H(\phi_1) = 0$ on $P_H(\Omega)^c$, and hence,
\begin{equation*}
    \int_{P_H(\Omega)} g P_H(\phi_1)^2 = \int_\Omega g (\phi_1)^{\, 2} = 1.
\end{equation*}
Therefore, using \eqref{riez_type_inequality_pol} and the definition of $\tau_{w,g}^+(P_H(\Omega))$, we obtain
\begin{align}\label{logpolid}
        \tau_{w,g}^+(\Omega)&=\iint\limits_{\Om\;\Om}\log\left(\frac{w(x)w(y)}{|x-y|}\right)\phi_1(x)\phi_1(y) dxdy\nonumber\\
        &\leq  \int\limits_{P_H(\Om)} \int\limits_{P_H(\Om)} \log\left(\frac{w(x)w(y)}{|x-y|}\right)P_H(\phi_1)(x)P_H(\phi_1)(y)dxdy\leq \tau_{w,g}^+(P_H(\Omega)).\nonumber
    \end{align} 
    The equality case follows the same argument as in the proof of \cite[Theorem 1.6]{anoopjiya2025}.  \qed
    
    The following remark is required for proving Theorem \ref{theorem_weighted_trans}
\begin{remark}\label{rem:superharmonic}
    For a fixed $y\in\mathbb{R}^N$ with $N\geq 2$, the function $f(x)=\log \frac{1}{|x-y|}$ satisfies   
    \begin{equation}
        -\Delta_x f(x)=\frac{(N-2)}{|x-y|^2}\geq 0, \quad \forall x\in \R^N\setminus\{y\}.
    \end{equation}
 Thus, $f$ is superharmonic on $\R^N\setminus\{y\}$. Consequently, $f$ satisfies the mean value inequality 
    \begin{equation}\label{eqn:meanvalueinequality}
            \int_{B_R(a)} \log \frac{1}{|x-y|} dx\leq |B_R(a)|\log \frac{1}{|a-y|},\quad a\neq y.
    \end{equation}
\end{remark}

\noindent \textbf{Proof of Theorem \ref{theorem_weighted_trans}:} Define 
    \begin{equation}
        \overline{\Om}_\epsilon:=\{x\in \overline{\Om}:dist(x,\partial \Om)\geq 2\epsilon \}.
    \end{equation}
     Taking $W = \frac{1}{w}$ in Proposition \ref{thm:weighted_diam_limit},  we have $T_{diam}\left(\overline{\Om}_\epsilon,\frac{1}{w}\right)\to T_{diam}\left(\overline{\Om},\frac{1}{w}\right)$ as $\epsilon \to 0$. Thus, we can choose $\epsilon_0 > 0$ sufficiently small so that $T_{diam}(\overline{\Om}_{\epsilon_0},W) > 1$. For each  $n \in \N$, let $x_1, x_2, \dots, x_n$ be a set of weighted Fekete points corresponding to $\rho_n\left(\overline{\Om}_{\epsilon_0},\frac{1}{w}\right)$, that is 
     $$\rho_n\left(\overline{\Om}_{\epsilon_0},\frac{1}{w}\right)= \left [\underset{1\leq i<j\leq n}{\prod} \frac{|x_i-x_j|}{w(x_i)w(x_j)} \right]^{\frac{2}{n(n-1)}}. $$
     Since ${\rho_n\left(\overline{\Om}_{\epsilon_0},\frac{1}{w}\right)}$ is a decreasing sequence with respect to $n$\cite[Theorem 1.1, p.143]{ebsaff1997}, it follows that $\rho_n\left(\overline{\Om}_{\epsilon_0},\frac{1}{w}\right) > 1$, for every $n\in\N$. Let 
    \begin{align}\label{r_N_K_N_f_N}
        r_n &= C_N\frac{1}{\sqrt[N]{n}}, \quad C_N=\left[\frac{\Gamma\left(\frac{N}{2}+1\right)}{\pi^{N/2}}\right]^{\frac{1}{N}},\\
    B_{n_j} &=B_{r_n}(x_j)=\{x : |x - x_j| \leq r_n\}, \quad j = 1, 2, \dots, n,\\
            f_{n}(x) &= \sum_{j=1}^n \chi_{B_{n_j}},\quad n\in\N.
    \end{align}
    For $n>\left(\frac{C_N}{ \epsilon_0}\right)^N$, we have $B_{n_j}\subset \Omega,$ and the function $f_{n}\in L^2(\Omega).$ Moreover, 
    \begin{align}\label{Inu_n2}
        \langle \Lom_w f_{n}, f_{n} \rangle&= \iint\limits_{\Om\;\Om} \log \left(\frac{w(x)w(y)}{|x-y|}\right)f_{n}(x)f_{n}(y)dxdy
        \nonumber\\
        &=\sum_{j=1}^n\,\,\, \iint\limits_{B_{n_j}\;B_{n_j}} \log \left(\frac{w(x)w(y)}{|x-y|}\right) dxdy+\sum_{\substack{i,j=1\\i\neq j}}^n \,\,\,\iint\limits_{B_{n_i}\;B_{n_j}} \log \left(\frac{w(x)w(y)}{|x-y|}\right) dxdy.\nonumber\\
    \end{align}
        For a fixed $x$, the function $\log\frac{1}{|x-y|}$ is superharmonic in $y$, and from Remark \ref{rem:superharmonic}, it follows that 
        \begin{equation}
            \int_{B_{n_j}} \log \frac{1}{|x-y|} dx\leq |B_{n_j}|\log \frac{1}{|x_j-y|},\quad y\neq x_j,
        \end{equation}
        where $|B_{n_j}|$ denotes the Lebesgue measure of $B_{n_j}$. Hence, for $i=j$, we obtain
        \begin{align}
            \iint\limits_{B_{n_j}\;B_{n_j}} \log \frac{1}{|x-y|} dxdy&\leq |B_{n_j}|\int_{B_{n_j}}\log \frac{1}{|x_j-y|} dy=\frac{1}{n}\int_{B_{r_n}}\log\frac{1}{|y|}dy\nonumber\\
            &=\frac{1}{n}|\mathbb{S}^{N-1}|\left[\frac{(r_n)^N}{N}\log\frac{1}{r_n}+\frac{ (r_n)^N}{N^2}\right].
        \end{align}
        Let $M\geq w$. Then from \eqref{r_N_K_N_f_N} and the above inequality, we have
        \begin{equation}
            \iint\limits_{B_{n_j}\;B_{n_j}} \log \left(\frac{w(x)w(y)}{|x-y|}\right) dxdy\leq \frac{1}{n}\frac{1}{N}|\mathbb{S}^{N-1}|\left[(r_n)^N \log\frac{1}{r_n}+\frac{ (r_n)^N}{N}\right] + \frac{1}{n^2}\log M^2.
        \end{equation} 
        Thus,
        \begin{equation}\label{app_eq2}
            \sum_{j=1}^n\,\,\, \iint\limits_{B_{n_j}\;B_{n_j}} \log \left(\frac{w(x)w(y)}{|x-y|}\right) dxdy\leq\frac{1}{N}|\mathbb{S}^{N-1}|\left[(r_n)^N \log\frac{1}{r_n}+\frac{ (r_n)^N}{N}\right] + \frac{1}{n}\log M^2.
        \end{equation}  
         Let $i \neq j$, and let $S=\log w$. Since $S$ is superharmonic, we have
        \begin{align}
            \iint\limits_{B_{n_i}\;B_{n_j}} \log \left(\frac{w(x)w(y)}{|x-y|}\right) dxdy &=
            \iint\limits_{B_{n_i}\;B_{n_j}} \left(S(x)+S(y)+\log\frac{1}{|x-y|}\right)dxdy\nonumber\\
            &\leq |B_{n_j}| \int_{B_{n_i}} \left(S(x_j)+S(y)+\log\frac{1}{|x_j-y|}\right)dy\nonumber \\
            &\leq |B_{n_j}| |B_{n_i}|\left(S(x_j)+S(x_i)+\log \frac{1}{|x_i-x_j|}\right)\nonumber\\
            &=\frac{1}{n^2}\log \left(\frac{w(x_i)w(x_j)}{|x_i-x_j|}\right).
        \end{align}
       Taking the sum over all terms, we obtain
        \begin{align}\label{app_eq1}
            \sum_{\substack{i,j=1\\i\neq j}}^n \,\,\,&\iint\limits_{B_{n_i}\;B_{n_j}} \log \left(\frac{w(x)w(y)}{|x-y|}\right) dxdy\leq \frac{1}{n^2}\sum_{\substack{i,j=1\\i\neq j}}^n\log \left(\frac{w(x_i)w(x_j)}{|x_i-x_j|}\right)\nonumber\\         
            &=\frac{1}{n^2} n (n-1)\log \frac{1}{\rho_n\left(\overline{\Om}_{\epsilon_0},\frac{1}{w}\right)}=\frac{n-1}{n} \log \frac{1}{\rho_n\left(\overline{\Om}_{\epsilon_0},\frac{1}{w}\right)},
        \end{align}
        where $\rho_n\left(\overline{\Om}_{\epsilon_0},\frac{1}{w}\right)= \left [\underset{1\leq i<j\leq n}{\prod} \frac{|x_i-x_j|}{w(x_i)w(x_j)} \right]^{\frac{2}{n(n-1)}}>1. $
         Substituting \eqref{app_eq2} and \eqref{app_eq1} in \eqref{Inu_n2} leads to 
        \begin{align}
             \langle \Lom_w f_{n}, f_{n}\rangle\leq \frac{1}{N}|\mathbb{S}^{N-1}|\left[(r_n)^N \log\frac{1}{r_n}+\frac{ (r_n)^N}{N}\right]+ \frac{1}{n}\log M^2+\frac{n-1}{n} \log \frac{1}{\rho_n\left(\overline{\Om}_{\epsilon_0},\frac{1}{w}\right)}<0,
        \end{align}
        for sufficiently large $n$. Therefore, $\Lom_w$ is not a positive operator on $L^2(\Om)$ and $\tau_{w,g}^-(\Om)<0$. \qed
       
 The next proposition shows that, if $\Lom_w$ is a positive operator, then $\tau_{w,g}^-(\Om)=0$.
\begin{proposition}\label{prop_tau_tilde_zero}
    Let $\Om\subset \R^N$ be a bounded open set, and $w\in\cpom$. Then there exists a sequence $(u_n)\text{ in }\mathcal{M}_g$ such that $\lb \Lom_w u_n, u_n\rb\to 0$. Consequently, 
    if $\Lom_w$ is a positive operator on $L^2(\Om)$, then $\tau_{w,g}^-(\Om)=0$.
\end{proposition}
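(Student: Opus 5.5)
The plan is to build an explicit sequence $(u_n)$ in $\mathcal{M}_g$ that oscillates faster and faster, so that the quadratic form $\lb \Lom_w u_n,u_n\rb$ tends to zero. The simplest choice is a weakly null orthonormal-type sequence. Set $\tilde v_n=\sqrt{g}\,u_n$. By \eqref{eqn_equiv_innerprod_ortho}, $\lb \Lom_w u_n,u_n\rb=\lb \Lom_{w,g}\tilde v_n,\tilde v_n\rb$ and $\int_\Om g u_n^2=\int_\Om \tilde v_n^2$. So it suffices to find a sequence $(\tilde v_n)$ in $L^2(\Om)$ with $\|\tilde v_n\|=1$ and $\tilde v_n\rightharpoonup 0$ weakly. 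Concretely, I would take an orthonormal basis $\{e_n\}$ of $L^2(\Om)$; any infinite orthonormal sequence converges weakly to $0$ by Bessel's inequality. I would then define $u_n=e_n/\sqrt{g}$, which lies in $\mathcal{M}_g$ since $\int_\Om g u_n^2=\int_\Om e_n^2=1$.

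The key step is compactness. By Proposition \ref{prop_Lom_compact} applied with $h(x,y)=1/\sqrt{g(x)g(y)}$, the operator $\Lom_{w,g}$ is compact. A compact operator maps weakly null sequences to norm null sequences, so $\|\Lom_{w,g}e_n\|\to 0$. Then
\[
|\lb \Lom_w u_n,u_n\rb|=|\lb \Lom_{w,g}e_n,e_n\rb|\le \|\Lom_{w,g}e_n\|\,\|e_n\|\to 0.
\]
This gives the first claim.

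For the consequence, suppose $\Lom_w$ is positive, meaning $\lb \Lom_w u,u\rb\ge 0$ for all $u\in L^2(\Om)$. Then the infimum in \eqref{weightedcharneg} is at least $0$. The sequence above shows this infimum is at most $\lim_n\lb \Lom_w u_n,u_n\rb=0$. Hence $\tau_{w,g}^-(\Om)=0$.

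There is no real obstacle here. The only point needing care is passing from $\Lom_w$ to the symmetrized operator $\Lom_{w,g}$, so that the normalization $\int_\Om g u^2=1$ becomes the standard $L^2$ norm. The weak-to-strong compactness property is standard.
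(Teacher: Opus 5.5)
Your proof is correct and follows essentially the paper's route: a normalized weakly null sequence, combined with compactness, forces the quadratic form to zero, and the positivity conclusion follows exactly as you describe. The only difference is the choice of sequence. The paper uses the explicit $\sin(nx_1)$ with the Riemann--Lebesgue lemma and compactness of $\Lom_w$ itself, then divides by $\int_\Om g v_n^2$ after bounding it below. Your passage to $\Lom_{w,g}$ with an orthonormal sequence makes the normalization automatic.
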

\begin{proof}
    Let $x=(x_1,x_2,\cdots,x_N)\in\R^N$. Define $v_n(x)=\sin (nx_1)$.
 By the Riemann-Lebesgue lemma\cite[p.94]{SteinShakarchi},  for every $f\in L^1(\R^N)$(and hence for every $f\in L^2(\Om)$), we have
    \begin{equation}\label{eqn_riemann_lebesgue}
        \int_{\Om} f(x) \sin (nx_1)dx\to 0,\; \text{and}\; \int_{\Om} f(x) \cos (nx_1)dx\to 0, \quad\text{as } n\to\infty.
    \end{equation}
     Thus, $v_n 	\xrightharpoonup{w} 0$. Since $\Lom_w$ is a compact operator and $v_n$ is a bounded sequence, we conclude 
    \begin{equation}
        |\lb \Lom_w v_n, v_n\rb|\leq \|\Lom_w v_n\|\|v_n\|\to 0.
    \end{equation}     
     Moreover, 
    \begin{align}
        \int_{\Om} g(x) v_n(x)^2dx&= \int_{\Om} g(x) (\sin (nx_1))^2dx\geq m \int_{\Om} \sin^2 (nx_1)dx\nonumber\\
        &=\frac{m}{2}\int_{\Om}(1-\cos (nx_1))dx=\frac{m|\Om|}{2}-\frac{m}{2}\int_{\Om}\cos (nx_1)dx.
    \end{align}
    By \eqref{eqn_riemann_lebesgue}, there exists $n_0\in\N$ such that $\int_{\Om} g(x) v_n(x)^2dx\geq C>0, \;\forall\, n\geq n_0$. Now, define  $$u_n(x)=\frac{v_n(x)}{\sqrt{\int_\Om g(x) v_n(x)^2 dx}}.$$ 
    It is easy to see that $(u_n)$ satisfies all requirements. 
\end{proof}

\section{On the planar case}\label{sec_twodim}
In this section, we prove results that are specific to the planar case. We discuss the possibility of \(0\) as an eigenvalue in Theorem~\ref{theorem_0_ev}, and the representation of eigenfunctions in Theorem \ref{efreptheorem}. Before proving Theorem~\ref{theorem_0_ev}, we first prove the following proposition.
\begin{proposition}\label{prop:Kernel}
    Let $\Omega \subset \mathbb{R}^2$ be a bounded open set, and let $w\in\cpom$ be such that  $\Delta (\log w )=C$, for some constant $ C.$  Then any function $u$ satisfying $\Lom_w u=0$ must be constant.  
\end{proposition}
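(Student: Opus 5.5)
The plan is to write $\Lom_w u$ as a sum of three pieces and apply the Laplacian. Expanding the kernel, for $u\in L^2(\Om)$ and $x\in\Om$ we have
\begin{equation*}
\Lom_w u(x) = \log w(x)\int_\Om u(y)\,dy + \int_\Om \log w(y)\,u(y)\,dy + \Lom u(x),
\end{equation*}
where the middle term is a constant. We then take the distributional Laplacian in $x$ on $\Om$. Since we are in the planar case, $\Delta \Lom u = -2\pi u$ in the sense of distributions on $\Om$ for $u\in L^2(\Om)$; this is \cite[Lemma 4.2]{gilbarg_trudinger}, or the fact that $\frac{1}{2\pi}\log\frac{1}{|x|}$ is the fundamental solution of $-\Delta$ in $\R^2$. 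Using the hypothesis $\Delta\log w = C$, the identity $\Lom_w u = 0$ then gives
\begin{equation*}
0=\Delta(\Lom_w u) = C\left(\int_\Om u\right) - 2\pi u \quad\text{in } \mathcal{D}'(\Om).
\end{equation*}
Hence $u = \frac{C}{2\pi}\int_\Om u$ a.e. in $\Om$, so $u$ is a.e. equal to a constant, which is the claim.

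The one point needing care is the justification of $\Delta\Lom u=-2\pi u$ for $u$ only in $L^2(\Om)$, not Hölder continuous. To handle this we test against $\varphi\in C_c^\infty(\Om)$. By Fubini, which applies because the kernel is locally integrable and $u\in L^2$, we get $\int \Lom u\,\Delta\varphi = \int_\Om u(y)\left(\int \log\frac{1}{|x-y|}\Delta\varphi(x)\,dx\right)dy$. The inner integral equals $-2\pi\varphi(y)$ by Green's representation formula for smooth compactly supported $\varphi$. This gives the distributional identity directly.

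For the term $\log w$ we only know $\log w\in C(\ol\Om)$. So we interpret the hypothesis $\Delta\log w=C$ distributionally on $\Om$, which is all the argument uses. Equivalently, $\log w - \frac{C}{4}|x|^2$ is harmonic, so $\log w$ is in fact smooth in $\Om$.

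If the domain is not connected, the argument is unaffected: the conclusion $u=\frac{C}{2\pi}\int_\Om u$ is a global constant.

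As a byproduct, if $C\neq 0$, integrating the identity gives $|\Om|\,\frac{C}{2\pi}=1$ whenever $\int_\Om u\neq0$. This is what feeds into Theorem \ref{theorem_0_ev}, but it is not needed for the present proposition.
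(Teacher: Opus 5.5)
Your argument is correct and follows the same route as the paper. Like the paper, you expand the kernel as $\log w(x)\int_\Om u+\int_\Om u\log w+\Lom u$, apply the Laplacian, and use $\Delta\Lom u=-2\pi u$ together with $\Delta\log w=C$ to get $u=\frac{C}{2\pi}\int_\Om u$. Your distributional check of $\Delta\Lom u=-2\pi u$, testing against $\varphi\in C_c^\infty(\Om)$, is a real gain in rigor: the cited \cite[Lemma 4.2]{gilbarg_trudinger} assumes a bounded, locally H\"older continuous density, whereas here $u$ is only known to lie in $L^2(\Om)$.
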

\begin{proof}
   Let $u$ be  a function satisfying $\Lom_w u=0$. Then, by expanding $log$ in \eqref{def_lw}, we get
    \begin{equation}\label{lap_Q_is_C}
        \Lom_w u(x) = \log w(x) \int_\Omega u(y)\,dy + \!\int_\Omega \log w(y)u(y)\,dy +  \Lom u(x)=0.
    \end{equation}
   Now, we apply the Laplacian on both sides of the above equation to get
   \begin{equation}
       \Delta\log w(x) \int_\Omega u(y)\,dy +\Delta  \Lom u(x)=0,
   \end{equation}
   and use the fact that $\Delta \Lom u = -2\pi u$\cite[Lemma 4.2]{gilbarg_trudinger} to obtain
   \begin{equation}\label{ef_ev0_is_constant}
       u(x) =\frac{\Delta \log w(x)}{2\pi} \!\int_\Omega u(y)\,dy= \frac{C}{2\pi} \!\int_\Omega u(y)\,dy.
   \end{equation}
   Thus, $u$ must be a constant. 
\end{proof}
\begin{remark}
     From \eqref{lap_Q_is_C}, one can see that $\Lom$ maps ${\rm Null}(\Lom_w)$ to $\operatorname{span}\{\log w,\, 1\}$. Since $\Lom$ is injective, it follows that $\dim {\rm Null}(\Lom_w) \le 2.$ Furthermore, from \eqref{operator_L_wg}, we have 
     \begin{equation}
         \Lom_{w,g}(\sqrt{g}u)=\frac{1}{\sqrt{g}}\Lom_w u,
     \end{equation}
     and hence
    \begin{equation}
        {\rm Null}(\Lom_{w,g})=\{\sqrt{g}u:u\in {\rm Null}(\Lom_w)\}.
    \end{equation}
    Therefore, we also have $\dim {\rm Null}(\Lom_{w,g}) \le 2.$    
\end{remark}

\noindent \textbf{Proof of Theorem \ref{theorem_0_ev}:} Suppose that $0$ is an eigenvalue of \eqref{evproblem} with the corresponding eigenfunction $u$. By the previous proposition, $u$ must be constant and by  \eqref{ef_ev0_is_constant}, $$u(x)=\frac{C}{2\pi}u(x)|\Om|,\quad\forall\,x\in\Om.$$ 
    However, since $C \neq \frac{2\pi}{|\Omega|}$, we obtain $u(x) = 0$ for all $x \in \Omega$. Thus, $0$ is not an eigenvalue of \eqref{evproblem}.  \qed

 In the next proposition, we compute $\Lom(u)$, for $u=1$.
\begin{proposition}
    Let $\Lom$ be the logarithmic potential operator defined on $L^2(B_R)$.
    \begin{equation}\label{term3inLw1}
        \Lom(1)(x) =\int_{B_R}\log \frac{1}{|x-y|}dy= -\frac{\pi |x|^2}{2} - \pi R^2\log R + \frac{\pi R^2}{2}.
    \end{equation}
\end{proposition}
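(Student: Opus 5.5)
Since $\Lom(1)(x)=\int_{B_R}\log\frac{1}{|x-y|}\,dy$ depends only on $|x|$, the plan is to use the mean value property of the harmonic function $\log\frac{1}{|z|}$ away from its singularity and reduce to a one-dimensional radial integral. Fix $x\in B_R$ and write the integral in polar coordinates about the origin, $y=s\zeta$ with $0<s<R$ and $\zeta\in\partial B_1$:
\begin{equation*}
    \Lom(1)(x)=\int_0^R s\left(\int_{\partial B_1}\log\frac{1}{|x-s\zeta|}\,dS(\zeta)\right)ds .
\end{equation*}
The key step is to evaluate the inner spherical mean. As a function of $x$ with $s$ fixed, $\log\frac1{|x-s\zeta|}$ is the same as $\log\frac{1}{|s\zeta - x|}$, and by symmetry the mean over $\zeta$ equals the mean of $y\mapsto\log\frac1{|y|}$ over the circle of radius $s$ centred at $x$ (equivalently, the classical formula for the circular mean of $\log|\cdot|$). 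This gives
\begin{equation*}
    \frac{1}{2\pi}\int_{\partial B_1}\log\frac{1}{|x-s\zeta|}\,dS(\zeta)=\log\frac{1}{\max\{|x|,s\}} .
\end{equation*}
For $s<|x|$ this is the mean value property of the harmonic function $y\mapsto\log\frac1{|x-y|}$ on $B_s$. For $s>|x|$ it is the mean value property of $z\mapsto\log\frac{1}{|s\zeta_0-z|}$, or equivalently it follows from Jensen's formula applied to $z\mapsto s-\bar\zeta z$, harmonic in $|z|<s$.

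Next, split the radial integral at $s=|x|$:
\begin{equation*}
    \Lom(1)(x)=2\pi\int_0^{|x|}s\log\frac1{|x|}\,ds+2\pi\int_{|x|}^R s\log\frac1s\,ds .
\end{equation*}
The first integral equals $-\pi|x|^2\log|x|$. The second, using $\int s\log s\,ds=\frac{s^2}{2}\log s-\frac{s^2}{4}$, equals
\begin{equation*}
    -\pi R^2\log R+\frac{\pi R^2}{2}+\pi|x|^2\log|x|-\frac{\pi|x|^2}{2}.
\end{equation*}
The $|x|^2\log|x|$ terms cancel, which yields \eqref{term3inLw1}. The case $x=0$ is covered directly, or by continuity from Proposition \ref{prop_Lom_compact}.

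As a consistency check, $\Delta$ of the right-hand side is $-2\pi$, matching $\Delta\Lom u=-2\pi u$. The only delicate point is justifying the spherical-mean formula in the regime $s>|x|$. I would handle it via Jensen's formula or the reflection argument above; the rest is routine calculus.
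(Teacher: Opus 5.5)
Your proof is correct and follows the paper's argument. Both write the integral in polar coordinates, use the circular-mean identity $\frac{1}{2\pi}\int_{\partial B_1}\log\frac{1}{|x-s\zeta|}\,dS(\zeta)=\log\frac{1}{\max\{|x|,s\}}$, split the radial integral at $s=|x|$, and cancel the $|x|^2\log|x|$ terms. The only difference is that you justify the circular-mean identity yourself, using the mean value property after a rotation that moves the averaging onto the circle of radius $|x|$; the paper simply quotes it as a known fact from potential theory.
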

\begin{proof}
    First, we rewrite the variables in polar coordinates as $x=re^{i\theta}$ and $y=\rho e^{i\omega} $. Thus,
    \begin{align}
        \Lom (1)(x)&=\int_{B_R}\log \frac{1}{|x-y|}dy= \int_0^R \int_0^{2\pi} \log \frac{1}{|re^{i\theta}-\rho e^{i\omega}|}d\omega \rho d\rho\\
        &= \int_0^R \int_0^{2\pi} \log \frac{1}{|r-\rho e^{i\omega}|}d\omega \rho d\rho.
    \end{align}
    Therefore, $\Lom (1)(x)=\Lom (1)(|x|)$. It is known (see \cite[Example 5.7, p. 22]{ebsaff1997})
    \begin{equation}
        \frac{1}{2\pi} \int_0^{2\pi} \log \frac{1}{|r-\rho e^{i\omega}|}d\omega= \begin{cases}
            \log \frac{1}{r}, & \rho\leq r,\\
            \log \frac{1}{\rho}, & \rho\geq r.
        \end{cases}
    \end{equation}
    \noindent Thus,
    \begin{align}
        \Lom (1)(x)&=2\pi\left[\int_0^r \log \frac{1}{r} \rho d\rho +\int_r^R \log \frac{1}{\rho} \rho d\rho\right] =2\pi\left[\frac{r^2}{2}\log \frac{1}{r}+\frac{R^2}{2}\log \frac{1}{R}+\frac{R^2}{4}-\frac{r^2}{2}\log \frac{1}{r}-\frac{r^2}{4}\right]\nonumber\\
        &=2\pi\left[-\frac{|x|^2}{4}-\frac{R^2}{2}\log R+\frac{R^2}{4}\right].
    \end{align}
\end{proof}

Next, we provide weight functions $w$ that satisfy  $\Delta \log w = \frac{2\pi}{|\Omega|}$.
\begin{example}\label{example_disk}
    For $R>0$, let 
    \begin{equation*}
        w_1(x):=e^{\frac{|x|^2}{2R^2}+\frac{1}{2}\log R-\frac{3}{8}}, \forall\, x\in B_R,
    \end{equation*}
    and
    $$Q_c(x):=log\, (cw_1)(x)=\frac{|x|^2}{2R^2}+\frac{1}{2}\log R-\frac{3}{8}+\log c,\quad c>0.$$ 
     Then, for each $c>0$, $\Delta Q_c = \frac{2}{R^2}=\frac{2\pi}{|B_R|}$, and 
    \begin{equation}\label{LW1is0}
        \Lom_{cw_1} (1)(x)= Q_c(x) \int_{B_R} \,dy + \!\int_{B_R} Q_c(y)\,dy + \Lom (1)(x).
    \end{equation}
    Next, we compute each term in \eqref{LW1is0}: 
    \begin{equation}\label{term1inLw1}
        Q_c(x) \int_{B_R} \,dy= \frac{\pi |x|^2}{2}+\frac{\pi R^2}{2}\log R-\frac{3}{8}\pi R^2+\pi R^2 \log c,
    \end{equation}
    and
    \begin{align}\label{term2inLw1}
       \int_{B_R}  Q_c(y)  \,dy&= \int_{B_R}\frac{ |y|^2}{2R^2}dy+\frac{\pi R^2}{2}\log R-\frac{3}{8}\pi R^2+\pi R^2 \log c\nonumber\\
       &=\frac{\pi R^2}{2}\log R-\frac{\pi R^2}{8}+\pi R^2 \log c.
    \end{align}    
    Substituting \eqref{term1inLw1}, \eqref{term2inLw1}, and \eqref{term3inLw1} in \eqref{LW1is0} yields
    \begin{equation}\label{Lcw1at11}
        \Lom_{cw_1} (1)(x)= 2\pi R^2\log c.
    \end{equation}
    Consequently, for $c=1$ (i.e., $w=w_1$), 0 is an eigenvalue of \eqref{evproblem} and 1 is a corresponding eigenfunction. Since $\Delta Q_c = \frac{2}{R^2}$, by Proposition \ref{prop:Kernel}, any eigenfunction corresponding to 0 must be constant. However, for $c\neq 1$, we have $\Lom_{cw_1} (1)(x)\neq 0$, and hence $\Lom_{cw_1}(u)\neq 0$ for any non-zero constant function $u.$  Hence, 0 can not be an eigenvalue of \eqref{evproblem} for the weight function $w=cw_1$, with $c\neq 1$. 
\end{example}

Next, we compute the weighted transfinite diameter of the disk $B_R$ for the weight function $W = \frac{1}{w_1}.$
\begin{proposition}\label{prop_ball_computation}
    Let $\Omega = B_R$ and  $W(x)=e^{-\left(\frac{|x|^2}{2R^2}+\frac{1}{2}\log R-\frac{3}{8}\right)}$. Then, $$T_{diam}(B_R,W)=1.$$
\end{proposition}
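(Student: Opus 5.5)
The plan is to compute $T_{diam}(B_R,W)$ through the capacity characterization of Remark~\ref{alternate_definition}, i.e.\ to show that the weighted Robin constant $V(\ol{B_R},W)$ equals $0$. Note that $W=\frac{1}{w_1}$, with $w_1$ as in Example~\ref{example_disk}. Hence
\[
\log\frac{1}{|x-y|W(x)W(y)}=\log\frac{w_1(x)w_1(y)}{|x-y|},
\]
and so $I_{\ol{B_R}}(\nu,W)=\iint \log\frac{w_1(x)w_1(y)}{|x-y|}\,d\nu(x)\,d\nu(y)$. The natural candidate for the equilibrium measure is the normalized area measure $\mu=\frac{1}{\pi R^2}\,dx|_{B_R}$.

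\textbf{Step 1: the potential of $\mu$ vanishes.} Set
\[
U^\mu(x):=\int \log\frac{w_1(x)w_1(y)}{|x-y|}\,d\mu(y)=\frac{1}{\pi R^2}\Lom_{w_1}(1)(x).
\]
By \eqref{Lcw1at11} with $c=1$, $U^\mu\equiv 0$ on $B_R$. Since $\Lom_{w_1}(1)$ is continuous on $\ol{B_R}$ (Proposition~\ref{prop_Lom_compact}), also $U^\mu\equiv0$ on $\ol{B_R}$. Consequently $I(\mu,W)=\int U^\mu\,d\mu=0$, and therefore $V(\ol{B_R},W)\le 0$.

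\textbf{Step 2: minimality of $\mu$.} Let $\nu$ be any normalized Borel measure on $\ol{B_R}$. If $I(\nu,W)=+\infty$ there is nothing to prove, so assume $I(\nu,W)<\infty$. Since $\log w_1$ is bounded, this means $\nu$ has finite logarithmic energy. Put $\sigma=\nu-\mu$, a signed measure with $\sigma(\ol{B_R})=0$. Expanding the bilinear form gives
\[
I(\nu,W)=I(\mu,W)+2\int U^\mu\,d\sigma+\iint\log\frac{w_1(x)w_1(y)}{|x-y|}\,d\sigma(x)\,d\sigma(y).
\]
The middle term is $0$ by Step 1. In the last term, the contributions of $\log w_1(x)+\log w_1(y)$ vanish because $\sigma$ has total mass zero. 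So the last term reduces to the unweighted energy $\iint\log\frac{1}{|x-y|}\,d\sigma\,d\sigma$. This energy is nonnegative for compactly supported signed measures of total mass zero and finite energy, by the classical positive definiteness of the logarithmic kernel (e.g.\ \cite[Lemma I.1.8]{ebsaff1997}). Hence $I(\nu,W)\ge I(\mu,W)=0$.

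\textbf{Conclusion.} Steps 1 and 2 give $V(\ol{B_R},W)=0$, and then \eqref{weightedlogarithmiccapacity} yields $T_{diam}(B_R,W)=e^{0}=1$.

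The main obstacle is Step 2. One must justify that all cross terms are finite, which holds because $U^\mu$ is bounded and $\nu$ has finite energy. One must also invoke (or reprove) the positivity of the logarithmic energy on zero-mass signed measures. In the plane this follows from the Fourier representation of $\log\frac1{|x|}$, or from writing the log kernel as an integral over scales of positive definite kernels. I would cite it rather than reprove it.
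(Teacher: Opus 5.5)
Your proof is correct. It starts where the paper does: the candidate equilibrium measure is the normalized area measure on $B_R$, and the explicit formula for $\Lom(1)$ shows that its weighted potential is constant and its energy is zero. The routes differ in how minimality is obtained.

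The paper checks that $\mathcal{U}^\nu-\log W\equiv\frac18-\frac12\log R$. It then cites the characterization theorem \cite[Theorem~3.3, p.~44]{ebsaff1997}, which says a finite-energy measure whose weighted potential is constant on $E$ is the weighted equilibrium measure. The energy value $0$ is computed separately from Example~\ref{example_disk}.

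You instead verify minimality directly by expanding $I(\nu,W)$ about $\mu$. The linear term drops because $U^\mu\equiv0$ on $\ol{B_R}$. The $\log w_1$ parts of the quadratic term drop because $\sigma$ has total mass zero. What remains is the unweighted energy of a neutral signed measure, which is nonnegative. In effect you reprove the relevant special case of the characterization theorem, using the same mechanism that drives such results: positive definiteness of the logarithmic kernel on mass-zero measures.

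The paper's route is shorter given the cited equilibrium theory. Yours is more self-contained and needs only the positivity lemma. It also makes visible that the single identity $\Lom_{w_1}(1)\equiv0$ on $\ol{B_R}$ gives both the constancy of the potential and $V(\ol{B_R},W)=0$. Your continuity extension of this identity to the boundary circle is genuinely needed: competitors may charge $\partial B_R$, which has positive capacity.
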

\begin{proof}
Recall the alternative definition of the transfinite diameter given in Remark~\ref{alternate_definition} by
        \begin{equation}
            T_{diam}(B_R,W)=e^{- I_{B_R}(\mu,W)},
        \end{equation}
       where $\mu$ is the equilibrium measure that attains the weighted Robin constant. Let $\nu$ be the normalized Borel measure on $B_R$ given by $d\nu(x) = \frac{1}{|B_R|}dx$. From \eqref{term3inLw1}, we have
\begin{align}
    \mathcal{U}^\nu(x)
    := \frac{1}{|B_R|} \int_{B_R} \log\!\left(\frac{1}{|x-y|}\right)dy
    = \frac{1}{\pi R^2}\Lom(1)(x)
    = -\frac{|x|^2}{2R^2} - \log R + \frac{1}{2}.
\end{align}
Moreover,
\begin{equation}
    \mathcal{U}^\nu(x) - \log W(x) = \frac{1}{8} - \frac{1}{2}\log R,
\end{equation}
which is a constant. Since $\mathcal{U}^\nu(x) - \log W(x)$ is a constant for every $x$, by \cite[Theorem~3.3, p.~44]{ebsaff1997}, $\nu$ is the equilibrium measure that attains the Robin constant 
$V(\ol{B}_R, W) = I_{\ol{B}_R}(\nu, W)$. 
In addition,
\begin{align}
    I_{B_R}(\nu, W)
    &= \frac{1}{|B_R|^2} 
       \iint\limits_{B_R \; B_R} 
       \log\!\left(\frac{1}{|x-y|W(x)W(y)}\right)dx\,dy \nonumber\\
    &= \frac{1}{|B_R|^2} 
       \int_{B_R} \Lom_{w_1}(1)(x)\,dx
       = 0 
       \quad (\text{by Example~\ref{example_disk}}).
\end{align}
Therefore, $T_{diam}(B_R, W) = 1.$
\end{proof}
\begin{remark}\label{rem:example_explanation_theorem1.9}
Let $w_1$ be as in Example~\ref{example_disk}. Since $\Delta \log (c w_1)=\frac{2\pi}{|B_R|}$, the function $\log (c w_1)$ is strictly subharmonic. Moreover, for $0<c<1$, it follows from \eqref{Lcw1at11} that $\langle \mathcal{L}_{c w_1}1,1\rangle<0$, and therefore $\mathcal{L}_{c w_1}$ admits a negative eigenvalue. In addition, for $0<c<1$, $T_{\mathrm{diam}}\left(B_R,\frac{1}{c w_1}\right)=\frac{1}{c^2}>1$. Therefore, the assumption that $\log w$ is superharmonic is not a necessary condition for the conclusion in Theorem \ref{theorem_weighted_trans}.
\end{remark}

Next, we prove the representation formula for the eigenfunctions of \eqref{evproblem}.  For $w = g = 1$, such a formula was derived in \cite[p.~367]{troutman1967}. Here, we follow a similar approach. Using this representation, we derive the maximum principle that provides insight into the geometric properties of these eigenfunctions.

\noindent \textbf{Proof of Theorem \ref{efreptheorem}:} 
Let $(\tau,\phi)$ be an eigenpair of \eqref{evproblem}. By Theorem \ref{theorem_0_ev}, we have $\tau\neq 0$. By evaluating $\mathcal{L}_{w}\phi$  at $x+r\zeta$ and integrating both sides over $\partial B_1$, we obtain
    \begin{align}\label{eqn:A1}
        &\tau \int_{\partial B_1}\phi(x+r\zeta)g(x+r\zeta)dS(\zeta)=  \int_{\partial B_1}\int_\Om \log\left(\frac{w(x+r\zeta)w(y)}{|x+r\zeta-y|}\right)\phi(y)dy dS(\zeta).
    \end{align} 
 Now,    
     \begin{align}
    \log\left(\frac{w(x+r\zeta)w(y)}{|x+r\zeta-y|}\right)&=\log w\left(x+r\zeta\right)+\log w(y)- \log (|x+r\zeta-y|)\nonumber\\ &=\log w\left(x+r\zeta\right)+\log\left(\frac{w(y)}{|x-y|}\right)-\log\left(\frac{|x+r\zeta-y|}{|x-y|}\right)\nonumber\\ 
    &=\log w\left(x+r\zeta\right)+\log\left(\frac{w(y)}{|x-y|}\right)-\log\left(|x_0|^{-1}|x_0+\zeta| \right),\nonumber\\
    \end{align}
     where $x_0=\frac{x-y}{r}$. Since $\log w$ is harmonic,  we obtain
    \begin{align}\label{eqn:A2}
        \int_\Om \left[\int_{\partial B_1}  \log w(x+r\zeta)dS(\zeta)\right] \phi(y)dy=2\pi\int_\Om \log w(x)\phi(y)dy.
    \end{align}  
    Furthermore, we have
    \begin{align}\label{eqn:A4}
        \int_\Om \left[\int_{\partial B_1} \log\left(\frac{w(y)}{|x-y|}\right)dS(\zeta)\right] \phi(y)dy=2\pi\int_\Om \log\left(\frac{w(y)}{|x-y|}\right)\phi(y)dy,
    \end{align}  
    and 
   \begin{align}\label{eqn:A5}
        \int_{\partial B_1} \log\left(\frac{|x+r\zeta-y|}{|x-y|}\right)dS(\zeta)&=-2\pi\log(|x_0|)+\int_{\partial B_1}\log\left(|x_0+\zeta| \right)dS(\zeta)\\
        &=\begin{cases}
           -2\pi  \log(|x_0|), & |x_0|\leq 1,\\
           0, &|x_0|>1,
        \end{cases}
    \end{align} 
      where the last equalities are obtained from the following classical result of the potential theory (see, for example \cite[p.22]{ebsaff1997}) 
    \begin{equation*}
        \int_{\partial B_r(0)} \log (|x_0+r\zeta|)dS(\zeta) =\begin{cases}
            2\pi \log(r), & |x_0|\leq r,\\
            2\pi \log (|x_0|), &|x_0|>r.
        \end{cases}
    \end{equation*}
   Therefore, 
    \begin{equation}\label{eqn:A3}
        \int_\Om \left[\int_{\partial B_1} \log\left(\frac{|x+r\zeta-y|}{|x-y|}\right)dS(\zeta) \right]\phi(y)dy=-2\pi \int_{B_r(x)}\log\left( \frac{|x - y|}{r} \right)\phi(y)dy.
    \end{equation}
   Applying Fubini’s theorem to the right-hand side of \eqref{eqn:A1}, and substituting \eqref{eqn:A2}, \eqref{eqn:A4}, and \eqref{eqn:A3}, we get
   \begin{align}
        \tau\int_{\partial B_1} &\phi(x+r\zeta)g(x+r\zeta) dS(\zeta)
        = \int_\Om\int_{\partial B_1} \log\left(\frac{w(x+r\zeta)w(y)}{|x+r\zeta-y|}\right)dS(\zeta)\phi(y)dy\\        
        &=2\pi \Lom_w(\phi)(x)+2\pi \int_{B_r(x)}\log\left( \frac{|x - y|}{r} \right)\phi(y)dy\\
        &=2\pi \tau \phi(x)g(x) +2\pi \int_{B_r(x)}\log\left( \frac{|x - y|}{r} \right)\phi(y)dy
        \end{align} 
    Thus,
    \begin{equation}
        \tau \phi(x)g(x)=\frac{\tau}{2\pi} \int_{\partial B_1} \phi(x+r\zeta)g(x+r\zeta) dS(\zeta)-\int_{B_r(x)}\log\left( \frac{|x - y|}{r} \right)\phi(y)dy.
    \end{equation}
\qed
\begin{corollary}\label{cor_efrep}
Let $w\in\cpom$ be such that $\log(w)$ is a harmonic function on $\Om$, and let $(\tau,\phi)$ be an eigenpair of \eqref{evproblem}. Then:
\begin{enumerate}[(i)]
    \item If $\tau > 0$, then $\phi g$ cannot have a positive local minimum or a negative local maximum in $\Omega$.
    \item If $\tau < 0$, then $\phi g$ cannot have a negative local minimum or a positive local maximum in $\Omega$.
\end{enumerate}
    \end{corollary}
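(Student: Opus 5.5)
The plan is to read off the corollary from the representation formula \eqref{eqn:repn} of Theorem~\ref{efreptheorem}, after dividing by $\tau$. By Theorem~\ref{theorem_0_ev}, applied with $C=0$ since $\log w$ is harmonic, we have $\tau\neq 0$. Write $\psi=\phi g$, which is continuous on $\Omega$ by Remark~\ref{remark_continuity}. For every $x\in\Omega$ and every small $r>0$ with $B_r(x)\subset\Omega$, the formula becomes
\begin{equation*}
\psi(x)-\frac{1}{2\pi}\int_{\partial B_1}\psi(x+r\zeta)\,dS(\zeta)=-\frac{1}{\tau}\int_{B_r(x)}\log\left(\frac{|x-y|}{r}\right)\phi(y)\,dy .
\end{equation*}
The key observation is that $\log\frac{|x-y|}{r}<0$ for $y\in B_r(x)\setminus\{x\}$. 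Hence the right-hand side has the sign of $\frac{1}{\tau}$ times the sign of $\phi$ near $x$, whenever $\phi$ has a strict sign there.

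For (i), with $\tau>0$, suppose $\psi$ has a positive local minimum at $x_0$. Since $g>0$ and $\psi(x_0)>0$, continuity gives $\phi>0$ on a small ball $B_{r}(x_0)$. On that ball $\psi\ge\psi(x_0)$, so the spherical mean $\frac{1}{2\pi}\int_{\partial B_1}\psi(x_0+r\zeta)\,dS$ is at least $\psi(x_0)$, and the left-hand side is therefore $\le 0$. The right-hand side, however, equals $-\frac{1}{\tau}$ times the integral of a negative function against a positive $\phi$ over a set of positive measure, so it is strictly positive. This is a contradiction.

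A negative local maximum is handled in the same way. Now $\phi<0$ near $x_0$ and $\psi\le\psi(x_0)$ there, so the left-hand side is $\ge 0$, while the right-hand side is strictly negative.

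For (ii), with $\tau<0$, the sign of $-\frac{1}{\tau}$ flips and the same argument applies. At a positive local maximum, $\phi>0$ and the left-hand side is $\ge0$, while the right-hand side is strictly negative. At a negative local minimum, $\phi<0$ and the left-hand side is $\le 0$, while the right-hand side is strictly positive.

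The only delicate step is ensuring strict inequality on the right-hand side. This requires $\phi$ to be nonzero on a set of positive measure inside $B_r(x_0)$, which follows from continuity of $\phi$ and $g>0$ once we take $r$ small enough. We also need the formula to hold at that small $r$, and Theorem~\ref{efreptheorem} allows every $r$ with $B_r(x_0)\subset\Omega$. No other obstacle is expected.
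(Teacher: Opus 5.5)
Your proposal is correct and follows essentially the same route as the paper. Both rearrange the representation formula \eqref{eqn:repn} at a putative extremum $x_0$ and compare the spherical mean of $\phi g$ with its value at $x_0$. The contradiction then comes from the strict sign of $-\frac{1}{\tau}\int_{B_r(x_0)}\log\bigl(\frac{|x_0-y|}{r}\bigr)\phi(y)\,dy$, which follows from $\log\frac{|x_0-y|}{r}<0$ and the sign of $\phi$ near $x_0$. You treat the remaining cases directly instead of passing to $-\phi$, and you make explicit the step the paper leaves implicit: $\phi$ has a strict sign on the small ball because $g>0$.
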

\begin{proof}
    Let $\tau > 0$. Suppose that $\phi g$ admits a positive local minimum at some point $x_0 \in \Omega$. Then there exists a neighborhood $B_r(x_0) \subset \Omega$ such that $\phi(x) g(x)\ge \phi(x_0)g(x_0)>0$, $\forall\, x\in B_r(x_0)$. Hence,
    \begin{equation}\label{phig1}
        \phi(x_0) g(x_0)\leq \frac{1}{2\pi} \int_{\partial B_1} \phi(x_0 + r y)g(x_0+ry)\, dS(y).
    \end{equation}
    On the other hand, by \eqref{eqn:repn}, we also have
    \begin{align*}
        \phi(x_0)g(x_0) &= \frac{1}{2\pi} \int_{\partial B_1} \phi(x_0 + r y) g(x_0+ry) \, dS(y) - \frac{1}{\tau}\int_{ B_r(x_0)} \log\left( \frac{|x_0 - \zeta|}{r} \right) \phi(\zeta) \, d\zeta\\
        &>\frac{1}{2\pi} \int_{\partial B_1} \phi(x_0 + r y)g(x_0+ry)\, dS(y).
    \end{align*}
    A contradiction. Thus, $\phi g$ does not admit a positive local minimum in $\Omega$. Considering the eigenfunction $-\phi$ in the above case, we conclude that $\phi g$ does not admit a negative local maximum in $\Omega$. The proof for $\tau<0$ follows a similar approach.
\end{proof}
\begin{remark}\label{rem:efrep}
Assume that $\log(w)$ is harmonic in $\Om$. For an eigenpair $(\tau,\phi)$ of \eqref{evproblem}, from the previous corollary, we can make the following observations:
\begin{enumerate}[$(i)$]   
    \item If $\phi g$ is constant in a neighbourhood in $\Om$, then this constant becomes a local extremum for $\phi g.$ Therefore, $\phi$ must be zero in this neighbourhood.
    \item If $\tau < 0$ and $\phi$ is sign-changing, then the global extrema of $\phi g$ on $\overline{\Omega}$ are attained on the boundary of $\Omega.$ 
    Thus $\phi$ can not be identically zero on $\partial \Omega.$
    \item \textbf{Maximum principle:} Let $\tau < 0$ and  $\phi\geq 0$ on $\partial\Omega.$ Then we must have $\phi\geq 0$ in $\Omega.$ If $\phi$ changes sign, then $\phi g$ admits a negative global minimum in $\Omega$, a contradiction.  
\end{enumerate}  
\end{remark}

\bibliographystyle{abbrvurl}
\bibliography{Reference}

\end{document}